\title[Misiurewicz parameters and dynamical stability of polynomial-like maps]{Misiurewicz parameters and dynamical stability of polynomial-like maps of large topological degree}
\author{Fabrizio Bianchi}
\thanks{This research was partially supported by the ANR project LAMBDA, ANR-13-BS01-0002 and by the FIRB2012 grant ``Differential Geometry and Geometric Function Theory", RBFR12W1AQ 002.}
\def\h#1{\hat{#1}}
\def\b#1{\bar{#1}}
\def\t#1{\tilde{#1}}
\newcommand{\Ee}{{\mathcal E}}
\newcommand{\Ff}{{\mathcal F}}
\newcommand{\Gg}{{\mathcal G}}
\newcommand{\Hh}{{\mathcal H}}
\newcommand{\Jj}{{\mathcal J}}
\newcommand{\Kk}{{\mathcal K}}
\newcommand{\Ll}{{\mathcal L}}
\newcommand{\Mm}{{\mathcal M}}
\newcommand{\Oo}{{\mathcal O}}
\newcommand{\Pp}{{\mathcal P}}
\newcommand{\Rr}{{\mathcal R}}
\newcommand{\Uu}{{\mathcal U}}
\newcommand{\Ww}{{\mathcal W}}
\newcommand{\Vv}{{\mathcal V}}
\newcommand{\pa}[1]{\left(#1\right)}
\renewcommand{\bra}[1]{\left[#1\right]}
\newcommand{\abs}[1]{\left|#1\right|}
\newcommand{\norm}[1]{\left\|#1\right\|}
\def\lam{\lambda}
\newcommand{\mb}{\mathbb}
\newcommand{\R}{\mb R}
\newcommand{\C}{\mb C}
\newcommand{\N}{\mb N}
\newcommand{\D}{\mb D}
\renewcommand{\P}{\mb P}
\newcommand{\la}{\langle}
\newcommand{\ra}{\rangle}
\newcommand{\lla}{\left	\langle}
\newcommand{\rra}{\right\rangle}
\def\eps{\varepsilon}      
\newtheorem*{teo*}{Theorem}
\newtheorem*{keyprop*}{Key proposition}
\newtheorem{teointroletter}{Theorem}
\newtheorem{teo}{Theorem}[section]
\newtheorem{defi}[teo]{Definition}
\newtheorem{cor}[teo]{Corollary}
\newtheorem{lemma}[teo]{Lemma}
\newtheorem{prop}[teo]{Proposition}
\DeclareMathOperator{\Lip}{Lip}
\newcommand{\omcu}{\Oo (M, \C^k, \Uu)}
\newcommand{\omcv}{\Oo (M, \C^k, \Vv)}
\DeclareMathOperator{\Supp}{Supp}
\renewcommand{\h}{\widehat}
\renewcommand{\hat}{\widehat}
\def\h#1{\hat{#1}}
\def\b#1{\bar{#1}}
\def\t#1{\tilde{#1}}
\newcommand{\FF}{\mathcal{F}}
\newcommand{\MM}{\mathcal{M}}
\newcommand{\JJ}{\mathcal{J}}
\newcommand{\XX}{\mathcal{X}}
\DeclareMathOperator{\jac}{Jac}
\newlist{MA}{enumerate}{1}
\setlist[MA]{label=I.\arabic*}
\newlist{Mintro}{enumerate}{1}
\setlist[Mintro]{label=A.\arabic*}
\newlist{MB}{enumerate}{1}
\setlist[MB]{label=II.\arabic*}
\newcommand{\Crit}{{\textsf {C}  }}
\DeclareMathOperator{\Card}{Card}
\begin{document}

\maketitle

\begin{abstract}
Given a family of polynomial-like maps of large topological
degree, we
relate the presence of Misiurewicz parameters to a growth condition of the postcritical volume.
This allows us to generalize to this setting the theory of stability
and bifurcation
developed by Berteloot, Dupont and the author for endomorphisms of $\P^k$.
\end{abstract}
\vskip .4 cm 
\small{\noindent \emph{Key Words}: holomorphic dynamics, polynomial-like maps, dynamical stability, Lyapunov exponents.

\vskip .4 cm

\noindent \emph{MSC 2010}: 32H50, 
32U40, 
37F45, 
37F50, 
37H15. 
}
\normalsize

\section{Introduction and results}

The goal of this paper is to study the dynamical stability of polynomial-like maps of large topological degree, generalizing to this setting
the theory developed in \cite{bbd2015} for endomorphisms of $\P^k$ .
Our main result
relates
bifurcation in such families
with
the volume growth of the postcritical set under iteration, generalizing the
one-dimensional equivalence between dynamical stability and normalily of the critical orbits.

The study of dynamical stability within families of holomorphic dynamical systems $f_\lam$ goes back to 
the 80s, when
Lyubich \cite{lyubich1983some} and Ma\~{n}\'e-Sad-Sullivan \cite{mane1983dynamics} independently
set the foundations of the study of holomorphic families of rational maps in dimension 1.
They proved that various natural definitions of stability (like the holomorphic motion of the
repelling cycles, or of the Julia set, or the Hausdorff continuity of this latter)
are actually equivalent
and that the stability locus is dense in the parameter space.

An important breakthrough in the field happened in 2000, when De Marco
\cite{demarco2001dynamics,demarco2003dynamics}
proved a formula relating the \emph{Lyapunov function} $L(\lam)$
of a rational map (equal to
the integral of the logarithm of the Jacobian of $f_\lam$ with
respect to the unique measure of maximal entropy for $f_\lam$)
to the critical dynamics of the family.
It turns out that the canonical 
closed and positive (1,1)-current $dd_\lam^c L$
on the parameter space
is exactly supported on the bifurcation locus, and this allowed
for the start of a measure-theoretic study of bifurcations.

In recent years, there has been a lot of activity in trying to generalize the
picture by Lyubich, Ma\~{n}\'e-Sad-Sullivan and De Marco
to higher dimension. The works by Berger, Dujardin and Lyubich \cite{dujardin2013stability, BergerDujardin}
are dedicated to the stability of H\'enon maps, while the work \cite{bbd2015}
is concerned with the case of families of endomorphisms of $\P^k$. This is the
higher-dimensional analogue of rational maps, and the bifurcation locus in this setting turns out to coincide
with the support of
$dd^c L$,
as in dimension 1. The generalization to this setting of
De Marco's formula due to Bassanelli-Berteloot \cite{BB1}
is crucial in this study.
The goal of this paper is to generalize
the description given in \cite{bbd2015}
to the setting of polynomial like maps of large topological degree,
focusing on
the relation between stability and critical dynamics.
 
Polynomial-like maps are proper holomorphic functions $f:U\to V$, where $U\Subset V\subset \C^k$ and $V$
is convex.
They
 must be thought
 of as a generalization of the endomorphisms of $\P^k$ (since lifts of these give rise to polynomial-like maps).
The dynamical study of polynomial-like maps in any dimension was undertaken by Dinh-Sibony \cite{ds_allure}.
They proved that
such systems admit a canonically defined measure of maximal entropy. Moreover, if
we restrict to polynomial-like maps
of large topological degree (see Definition \ref{defi_large_topological_degree})
this \emph{equilibrium  measure} enjoys much of the properties
of its counterpart for
endomorphisms of $\P^k$.
The main difference is the lack of a potential for this measure.

In order to state the main results of this paper
we have to give some preliminary definitions. The first one,
introduced in \cite{bbd2015} for endomorphisms of $\P^k$,
concerns \emph{Misiurewicz parameters}.
These are
the higher-dimensional analogue of the maps with a strictly preperiodic critical point in dimension 1
and are the key to understand the interplay between bifurcation and critical dynamics.

\begin{defi}\label{defi_misiurewicz}\index{Misiurewicz parameter}
Let $f_\lam$
be a holomorphic family of polynomial-like maps.
A point $\lambda_0 \in M$ is called a \emph{Misiurewicz parameter} if 
there exist
a neighbourhood
$N_{\lambda_0} \subset M$ of $\lambda_0$ and
a holomorphic map $\sigma \colon N_{\lam_0}\to \C^k$
 such that:
 \begin{enumerate}
 \item\label{defi_in_julia_rep} for every $\lambda\in N_{\lam_0}$, $\sigma(\lambda)$ is a
 repelling periodic point;
 \item\label{defi_in_julia} $\sigma(\lam_0)$ is in the Julia set $J_{\lam_0}$ of $f_{\lam_0}$;
 \item\label{defi_capta_critico} there exists an $n_0$ such that $(\lambda_0, \sigma (\lambda_0)) \in f^{n_0} (C_f)$;
 \item\label{defi_intersez} $\sigma(N_{\lambda_0}) \nsubseteq f^{n_0} (C_f)$.
 \end{enumerate}
\end{defi}

Our
main result
is
the following.

\begin{teointroletter}\label{teo_misiurewicz_intro_pl}
Let $f_\lam$
be a holomorphic family of polynomial-like maps of large topological degree. Assume that
$\lam_0$
is a Misiurewicz parameter. Then, $\lam_0 \in \Supp dd^c L$.
\end{teointroletter}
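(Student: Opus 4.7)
The plan is to argue by contradiction. Assume $\lam_0\notin\Supp dd^c L$, so that $L$ is pluriharmonic on a connected neighbourhood $M_0$ of $\lam_0$. The task is to derive a contradiction from the Misiurewicz data at $\lam_0$.

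First, I would translate the pluriharmonicity of $L$ into a quantitative geometric statement about the iterated postcritical set. In the endomorphism case of \cite{bbd2015} this step is carried out via the Bassanelli--Berteloot formula $dd^c L = \pi_*(\h T^k\wedge[C_f])$: pluriharmonicity of $L$ on $M_0$ forces the slicewise wedge of the Green current with the critical divisor to be essentially tangential to the fibres, and so controls the postcritical volume growth over $M_0$. In the polynomial-like setting the absence of a global potential for $\mu_\lam$ prevents a direct application of this identity, but the Dinh--Sibony convergence $d_t^{-n}(f_\lam^n)^*\Omega\to\mu_\lam$ (at a geometric speed in the large topological degree regime, for appropriate smooth probability forms $\Omega$) provides the necessary substitute. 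The equivalence promised in the abstract---pluriharmonicity of $L$ on $M_0$ versus a subexponential bound on $\operatorname{Vol}(f^n(C_f)\cap K)$ for compact $K\Subset M_0\times V$---is exactly the quantitative output I would extract here and feed into the next steps.

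Next, I would exploit the Misiurewicz data geometrically. After shrinking $N_{\lam_0}\subset M_0$, the graph $\Gamma_\sigma:=\{(\lam,\sigma(\lam)):\lam\in N_{\lam_0}\}$ admits holomorphic inverse branches $g_n$ of $f^{pn}$ (with $p$ the period of $\sigma$) defined on a fixed polydisk transverse to $\Gamma_\sigma$ and contracting every transverse direction by a uniform factor $\theta^n$ with $\theta<1$. Condition~(\ref{defi_capta_critico}) places $f^{n_0}(C_f)$ through $(\lam_0,\sigma(\lam_0))\in\Gamma_\sigma$, and condition~(\ref{defi_intersez}) ensures this intersection is not persistent, so an irreducible component of $f^{n_0}(C_f)$ meets $\Gamma_\sigma$ isolatedly at $\lam_0$ with positive geometric multiplicity. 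Iterating via $g_n$ produces, for every $n\geq1$, an embedded piece of $f^{n_0+pn}(C_f)$ meeting $\Gamma_\sigma$ at a parameter $\lam_n\to\lam_0$. Crucially, the transverse contraction of $g_n$ inflates the local $(2k-2)$-volume of these pieces by a factor $\asymp\theta^{-2(k-1)n}$, driving the local postcritical volume near $\Gamma_\sigma$ to grow exponentially in $n$. Set against the subexponential bound from Step~1, this gives the desired contradiction.

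The main obstacle is Step 1: without a global potential for $\mu_\lam$ the clean Bassanelli--Berteloot identity is unavailable, and the quantitative translation between pluriharmonicity of $L$ and controlled growth of the postcritical volume must be produced intrinsically. Concretely, one has to control $dd^c L = dd^c\int\log\jac f_\lam\,d\mu_\lam$ by a cohomologically meaningful object built from the iterated critical divisors $d_t^{-n}(f^n)^*[C_f]$, with enough uniformity in $\lam$ to transfer estimates from the dynamical space to the parameter space. Once this link is in place, the geometric contraction/iteration argument of Steps~2--3 runs essentially as in \cite{bbd2015}.
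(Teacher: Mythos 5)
Your overall architecture (reduce to a volume-growth criterion for the postcritical set, then use the Misiurewicz data to violate it) matches the paper, and your Step 1 is indeed Theorem \ref{teo_massa_crit_intro} / Theorem \ref{teo_mis2_critical_volume_growth}. But there are two genuine gaps, one in the statement of the criterion and one in the mechanism you use to violate it.

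First, the threshold is not ``subexponential''. What pluriharmonicity of $L$ on $\Lambda$ actually yields is $\norm{(f^n)_*\Crit_f}_{\Uu_\Lambda}\leq c_2\,\delta^n$ for any $\delta>d^*_{k-1}$: an exponential bound at the rate of the $(k-1)$-st $*$-dynamical degree, which is in general much larger than $1$. So to reach a contradiction you must exhibit postcritical volume growth at a rate \emph{strictly exceeding} $d^*_{k-1}$, not merely exponential growth.

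Second, and more seriously, the mechanism in your Step 2 cannot produce such growth. Pushing a single germ of $f^{n_0}(C_f)$ forward along the repelling orbit of $\sigma$ does not inflate volume exponentially: the inverse branches $g_n$ contract the fibre directions by $\theta^n$, so the piece $P_n=g_n(\Delta)\cap f^{n_0}(C_f)$ has volume $O(\theta^{2kn})$, and applying $f^{pn}$ (expanding by $\theta^{-n}$) returns a single sheet crossing the fixed polydisk, of bounded volume --- the two factors cancel. Even setting this aside, no argument that pushes forward one fixed positive closed current of bidimension $(k-1,k-1)$ in the fibres can beat $d^*_{k-1}$, since that degree is by definition an upper bound for the growth of such pushforwards (Lemma \ref{lemma_deg_dinamico_intorno}). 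The paper gets past this by a different device: Proposition \ref{prop_mis2_costruzione_tubi} (a Briend--Duval-type construction, using $d_t>d^*_{k-1}$) produces, for $\delta$ arbitrarily close to $d_t$, at least $\delta^{Nn}$ \emph{disjoint} inverse branches $T_{n,i}$ of a cylinder $T_0=\D_\eta\times A_0$ around a point of $J_{\lam_0}$; the role of the expansion at $\sigma$ is only qualitative, via Lemma \ref{lemma_malefico}, to guarantee that one irreducible component $\t C$ of $f^{n_0+n_0'}(C_f)$ crosses the graph of \emph{every} holomorphic map $\D\to B$, hence meets every leaf of the foliation of every tube $T_{n,i}$; then each image piece $D_{n,i}=f^{Nn}(T_{n,i}\cap\t C)$ projects onto all of $A_0$ and so has volume $\geq\mathrm{vol}(A_0)$ by Wirtinger. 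The exponential factor $\delta^{Nn}>(d^*_{k-1})^{Nn}$ comes from the \emph{count of disjoint tubes} (ultimately from the topological degree), not from the expansion rate at the repelling point. Without this multiplicity your Steps 2--3 do not close the contradiction.
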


The approach to this statement in
the case of $\P^k$ \cite{bbd2015}
relies on the existence of a potential -- the 
Green function -- for the equilibrium measures. We thus
adopt a different (and more geometrical)
approach.
A crucial step in establishing the result above is
proving the following
Theorem,
which
 can be seen as a generalization of the fact that, in dimension 1,
 the bifurcation locus coincides with the non-normality locus of some critical orbit.
 
 \begin{teointroletter}\label{teo_massa_crit_intro}
Let $f_\lam$ be a holomorphic family of polynomial-like maps of large topological degree $d_t$. Then
\[dd^c L=0 \Leftrightarrow
\limsup_{n\to \infty }
\frac{1}{n}
\log
\norm{\pa{f^{n}}_* \Crit_f }
< \log c,\]
where $c$ is some constant (depending on the family) smaller than $d_t$.
\end{teointroletter}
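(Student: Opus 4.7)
The plan is to derive a Ces\`aro-type formula for $dd^c L$ whose individual terms are controlled by $\|(f^n)_*\Crit_f\|$, and then to read both implications off it.

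Consider the fibered map $F(\lambda,z) := (\lambda, f_\lambda(z))$ acting on a suitable open subset of $M\times\C^k$, and write $\hat\mu$ for the natural fibered equilibrium measure whose slice over $\lambda$ is $\mu_\lambda$. Starting from the identity $L(\lambda) = \frac{1}{n}\int \log|\jac_z F^n|\, d\mu_\lambda$ (chain rule plus $f_\lambda$-invariance of $\mu_\lambda$) and the equation $dd^c_z \log|\jac_z F^n| = \sum_{j=0}^{n-1}(F^j)^*[\Crit_F]$ as currents on $M\times V$, pushing forward to $M$ and applying $dd^c_\lambda$ should yield a formula of the shape
\[
dd^c L \;=\; \lim_{n\to\infty}\frac{1}{n}\sum_{j=0}^{n-1}(\pi_M)_*\bigl((F^j)^*[\Crit_F]\wedge\hat\mu\bigr).
\]
Applying the projection formula together with the invariance relations $F_*\hat\mu=\hat\mu$ and $F^*\hat\mu = d_t\hat\mu$, each term rewrites as $d_t^{-j}(\pi_M)_*\bigl((F^j)_*[\Crit_F]\wedge\hat\mu\bigr)$ up to controllable error terms, giving the desired bridge between $dd^c L$ and the pushforwards of the critical hypersurface.

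The direction $(\Leftarrow)$ then follows directly: the growth hypothesis bounds the mass of the $j$-th term by a uniform constant times $(c/d_t)^j$, so the Ces\`aro mean tends to $0$ and $dd^c L = 0$. For $(\Rightarrow)$, the positivity of each term in the sum implies that the vanishing $dd^c L = 0$ forces every term to vanish asymptotically; combined with the quantitative equidistribution $d_t^{-n}(f_\lambda^n)^*\omega^k \to \mu_\lambda$ available for polynomial-like maps of large topological degree (with exponential rate controlled by $c/d_t$), this transfers the vanishing back to the desired growth bound on $\|(f^n)_*\Crit_f\|$, the constant $c$ being precisely the rate appearing in this equidistribution.

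The main difficulty, compared to the $\P^k$ setting treated in \cite{bbd2015}, is the absence of a global Green function for $\mu_\lambda$: the compact starting identity $L(\lambda) = \int G\, d[\Crit]$ used there is not available here. One must therefore work with the Ces\`aro averages above and carefully justify the commutation of $dd^c_\lambda$ with the fiberwise integrals against $\hat\mu$, as well as the well-definedness and continuity of the wedge products $(F^j)^*[\Crit_F] \wedge \hat\mu$. This justification is precisely where the large topological degree hypothesis enters and where the constant $c<d_t$ emerges.
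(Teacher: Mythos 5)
There is a genuine gap, and it sits exactly at the step you flag as needing ``controllable error terms''. Your Ces\`aro formula is vacuous: by the invariance $\pa{f_\lam}_*\mu_\lam=\mu_\lam$, each summand $\frac1n\lla\mu_\lam,\log\abs{\jac f_\lam^j\circ f_\lam}\rra$-type term equals $L(\lam)$ exactly; equivalently, $F^*\Ee=d_t\Ee$ and the projection formula give $(\pi_M)_*\pa{(F^j)^*\Crit_F\wedge\Ee}=d_t^{j}\,dd^cL$, so after normalization every term of your average is $dd^cL$ itself and the formula carries no information. The proposed rewriting of each term as $d_t^{-j}(\pi_M)_*\pa{(F^j)_*\Crit_F\wedge\hat\mu}$ is where the whole content of the theorem lives, and it is not a consequence of invariance: the pairing of $(F^j)_*\Crit_F$ with a horizontal $(k,k)$-current sees only the component of $(F^j)_*\Crit_F$ transverse to the fibers, whereas $\norm{(f^j)_*\Crit_f}$ is the total mass in the product space. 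The discrepancy between these two quantities is precisely what must be estimated, and neither direction of the equivalence follows until it is. Your identification of the constant $c$ is also wrong: there is no exponential-rate equidistribution $d_t^{-n}(f_\lam^n)^*\omega^k\to\mu_\lam$ available for polynomial-like maps of large topological degree (none is stated or used in the paper), and $c$ is not an equidistribution rate but (any number above) the $*$-dynamical degree $d_{k-1}^*=\sup_\lam d_{k-1}^*(f_\lam)$, which governs the growth of fiberwise volumes of pushed-forward $(1,1)$-currents.

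The paper's actual mechanism (Theorem \ref{teo_mis2_critical_volume_growth}) is a direct two-sided mass estimate obtained by expanding $\norm{(f^n)_*\Crit_f}_{\Uu_\Lambda}$ against $(\omega_V+\omega_M)^{k+m-1}$ and separating the two surviving bidegree components. The component against $\omega_V^{k}\wedge\omega_M^{m-1}$ is shown to be asymptotic to $\norm{dd^cL}_\Lambda\, d_t^n$ by writing it as $d_t^n\int\Crit_f\wedge S_n\wedge\omega_M^{m-1}$ with $S_n=d_t^{-n}(f^n)^*(\pi_V^*\rho\cdot\omega_V^k)$ and invoking the convergence $\Crit_f\wedge S_n\to\Crit_f\wedge\Ee$ to Pham's equilibrium current (Lemma \ref{lemma_intersezioni_tendono} and Corollary \ref{cor_lemma_convergenza_large_top_degree}) together with $dd^cL=\pi_*(\Crit_f\wedge\Ee)$. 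The component against $\omega_V^{k-1}\wedge\omega_M^{m}$ is identified, by slicing, with $\int_\Lambda\norm{(f_\lam^n)_*\Crit_{f_\lam}}\,\omega_M^m$ and bounded by $\t c_2\,\delta^n$ for any $\delta>d_{k-1}^*$ via the uniform dynamical-degree estimate of Lemma \ref{lemma_deg_dinamico_intorno}. The inequality $c_1'\norm{dd^cL}_\Lambda d_t^n\le\norm{(f^n)_*\Crit_f}_{\Uu_\Lambda}\le c_1\norm{dd^cL}_\Lambda d_t^n+c_2\delta^n$ then yields both implications at once. To repair your argument you would need to supply exactly this decomposition; the Ces\`aro averaging and the appeal to a quantitative equidistribution rate should be discarded.
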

 
The proof of this result
relies on the theory of slicing
of currents and
more precisely
on the use of
equilibrium currents, which
was initiated by Pham \cite{pham} (see also \cite{ds_cime}).

In the second part of the paper we exploit Theorem \ref{teo_misiurewicz_intro_pl}
to generalize the theory
developed in \cite{bbd2015}
to the
setting of polynomial-like maps of large topological degree.
 Consider the
 set
\[
\Jj :=\{ \gamma \in \Oo (M, \C^k) \colon \gamma(\lambda)  \in J_\lam \mbox{ for every } \lam \in M\}
\]
 The family $\pa{f_\lam}_\lam$
 naturally induces an action
 on $\Jj$, by
 $\pa{\Ff\cdot \gamma} (\lam) := f_\lam \pa{\gamma(\lam)}$.
 We denote by $\Gamma_\gamma$
 the graph in the product space of the element $\gamma\in \Jj$. The following is then the
 analogous of holomorphic motion of Julia sets in this setting (see \cite{bbd2015}).
 \begin{defi}\label{defi_lamination}\index{Equilibrium lamination}
 An \emph{equilibrium lamination} is an $\Ff$-invariant subset $\Ll$ of $\Jj$ such that
 \begin{enumerate}
\item $\Gamma_\gamma \cap \Gamma_{\gamma'} = \emptyset$ for every distinct $\gamma,\gamma' \in \Ll$,
\item $\mu_\lambda \pa{\{ \gamma(\lambda) , \gamma \in \Ll\}} =1 $ for every $\lambda \in M$, where $\mu_\lam$
is the equilibrium measure of $f_\lam$.
\item $\Gamma_\gamma$ does not meet the grand orbit of the critical set of $f$ for every $\gamma \in \Ll$,
\item the map $\Ff : \Ll \to \Ll$ is $d^k$ to $1$.
\end{enumerate}
\end{defi}

We also need
a weak
notion
of holomorphic motion for the repelling cycles in the Julia set, the \emph{repelling $J$-cycles}.
Notice that in higher dimension repelling points may be outside the Julia set
(\cite{hubbard1991superattractive,fs_examplesP2}).

\begin{defi}\label{defi_rep_asymptotically_move_hol}
We say that \emph{asymptotically all $J$-cycles move holomorphically} if there exists
a subset $\Pp = \cup_n \Pp_n \subset \Jj$ such that
\begin{enumerate}
 \item $\Card \Pp_n = d^n + o(d^n)$;
\item
every $\gamma \in \Pp_n$ is $n$-periodic; and
\item for every $M' \Subset M$, asymptotically every element of $\Pp$
is 
 repelling, i.e.,
  \[\frac{\Card \left\{\mbox{ repelling cycles in } \Pp_n\right\} }{ \Card \Pp_n }\to 1.\]
 \end{enumerate}
\end{defi}

Stability can be then characterized as follows.
\begin{teointroletter}\label{teo_equiv_intro}
 Let $f_\lam$
 be a holomorphic family of polynomial-like maps of large
 topological degree $d_t \geq 2$. Assume that the parameter space is simply connected.
 Then the following are equivalent:
 \begin{Mintro}
  \item\label{item_teo_intro_rep} asymptotically all $J$-cycles move holomorphically;
    \item\label{item_teo_intro_lam} there exists an equilibrium lamination for $f$;
  \item\label{item_teo_intro_lyap} the Lyapunov function is pluriharmonic;
\item\label{item_teo_intro_mis} there are no Misiurewicz parameters. 
 \end{Mintro}
\end{teointroletter}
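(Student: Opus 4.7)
The plan is to prove Theorem \ref{teo_equiv_intro} by establishing the cyclic chain of implications A.3 $\Rightarrow$ A.4 $\Rightarrow$ A.2 $\Rightarrow$ A.1 $\Rightarrow$ A.3. The first arrow is a direct consequence of Theorem \ref{teo_misiurewicz_intro_pl}: if $L$ is pluriharmonic then $\Supp dd^c L = \emptyset$, so by contraposition no Misiurewicz parameter can exist. The remaining three implications adapt to the polynomial-like setting the strategy developed in \cite{bbd2015} for endomorphisms of $\P^k$.

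For A.4 $\Rightarrow$ A.2 the goal is to build $\Ll$ by following inverse branches of the iterates of $f$. Given $x_0\in J_{\lam_0}$ together with a backward orbit under $f_{\lam_0}$, one extends the successive preimages to holomorphic graphs $\gamma\colon M\to \C^k$. The key is to control, for each $n$, the set of branches of $f^{-n}$ that meet the postcritical set: absence of Misiurewicz parameters prevents a repelling periodic point which sits on $f^{n_0}(C_f)$ at some nearby parameter from being detached by a small deformation, and this rigidity, combined with the density of repelling cycles in $J_\lam$ (Dinh--Sibony), forces the ``bad'' branches to be of vanishing $\mu$-measure. Where \cite{bbd2015} uses the Green function as a global potential to implement these mass estimates, we work directly with the equilibrium currents on the product space $M\times \C^k$ and with the postcritical volume bound of Theorem \ref{teo_massa_crit_intro}. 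Simple connectedness of $M$ trivialises the monodromy and assembles the resulting branches into $\Ll$; the $\mu_\lam$-full measure and the $d_t$-to-$1$ properties follow by construction from the generic local degree of $f_\lam$ on $J_\lam$.

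The step A.2 $\Rightarrow$ A.1 is comparatively soft: repelling periodic points equidistribute toward $\mu_\lam$ (Dinh--Sibony), so almost every $\Ff$-periodic element of $\Ll$ yields a holomorphic $n$-periodic graph $\gamma(\lam)$ whose orbit is repelling for almost every $\lam$; a fibrewise application of the Briend--Duval formula then upgrades this to repellence for \emph{every} $\lam$ asymptotically, which is exactly the content of Definition \ref{defi_rep_asymptotically_move_hol}. For A.1 $\Rightarrow$ A.3 one uses the approximation
\[
L_n(\lam) := \frac{1}{n\, d_t^n} \sum_{\gamma \in \Pp_n} \log \bigl|\det Df_\lam^n(\gamma(\lam))\bigr|,
\]
which, thanks to the equidistribution of periodic points toward $\mu_\lam$, converges to $L$ in $L^1_{\mathrm{loc}}(M)$. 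Under A.1 each summand is asymptotically the log-modulus of a non-vanishing holomorphic function, hence pluriharmonic, while the remaining fraction of summands vanishes in the limit; a standard $dd^c$-closedness argument then yields $dd^c L = 0$.

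The expected main obstacle is the step A.4 $\Rightarrow$ A.2. In \cite{bbd2015} a crucial role is played by the global pluripotential-theoretic potential of the equilibrium measure, which provides sharp control on the postcritical masses and allows one to run the construction of the lamination fibrewise. Such a potential is not available for polynomial-like maps, so the argument must be re-engineered around the slicing theory of equilibrium currents (Pham \cite{pham}) and the effective volume bound of Theorem \ref{teo_massa_crit_intro}, which together serve as a substitute for the Green function estimates. Once this alternative control is in place, the bulk of the remaining construction proceeds in parallel with the $\P^k$ case.
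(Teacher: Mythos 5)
Your chain A.3 $\Rightarrow$ A.4 $\Rightarrow$ A.2 $\Rightarrow$ A.1 $\Rightarrow$ A.3 has the right shape, and the first arrow is exactly the paper's (Theorem \ref{teo_misiurewicz_intro_pl}). But two of the remaining arrows contain genuine gaps. For A.2 $\Rightarrow$ A.1 you treat the step as ``comparatively soft'' and propose a \emph{fibrewise} application of Briend--Duval to upgrade repellence from almost every $\lam$ to every $\lam$. This is precisely the step that does not work fibrewise: repelling $J$-cycles can become non-repelling or leave $J_\lam$ under perturbation, and a pointwise argument gives no uniformity over $\lam$. The paper's solution is to run the entire Briend--Duval contraction argument \emph{in the space of graphs} $\Jj$ with the topology of local uniform convergence (Lemma \ref{lemma_bd_metric}), feeding it the backward contraction estimate of Proposition \ref{LemSF} (which itself rests on the positivity of all Lyapunov exponents for maps of large topological degree); the Lipschitz contraction of $\Ff^{-n}$ in $\Jj$ then dominates the fibrewise contractions simultaneously for all $\lam$ in a compact subset. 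This is flagged in the introduction as the implication proved in detail, and it is the main new content of Section \ref{section_hol_motions}; it cannot be dismissed as soft.

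For A.1 $\Rightarrow$ A.3 your route is genuinely different from the paper's and has an unproven key step: the $L^1_{\mathrm{loc}}$ convergence of $L_n(\lam)=\frac{1}{n d_t^n}\sum_{\gamma\in\Pp_n}\log\abs{\det Df_\lam^n(\gamma(\lam))}$ to $L$ does \emph{not} follow from the equidistribution $\frac{1}{d_t^n}\sum_{p\in R_n}\delta_p\to\mu_\lam$, because $\log\abs{\jac}$ is unbounded and discontinuous on the critical set, so one cannot pass to the limit in the pairing; approximating Lyapunov exponents by periodic orbits is a hard theorem in its own right, not a corollary of equidistribution. Likewise the claim that the $o(d_t^n)$ non-repelling summands contribute nothing to $dd^c L_n$ needs a mass bound you do not supply. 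The paper instead routes A.1 through the existence of an acritical equilibrium web (Theorem \ref{teo_construct_web}, Proposition \ref{prop_graph_then_acritical}) and then uses condition \ref{item_support} $\Rightarrow$ \ref{item_ddc} of Theorem \ref{teo_equiv}, where the substitute for the missing Green potential is the exponential integrability of psh functions with respect to $\mu_\lam$ for maps of large topological degree --- not Theorem \ref{teo_massa_crit_intro}, which is used only to prove Theorem \ref{teo_misiurewicz_intro_pl}. Finally, for A.4 $\Rightarrow$ A.2 your ``rigidity forces the bad branches to have vanishing measure'' is not an argument; the actual mechanism is the construction of a hyperbolic set of entropy larger than $\log d^*_{k-1}$, hence not contained in the postcritical set (Theorem \ref{teo_motion_hyp_properties} via Lemmas \ref{lemma_entropia_alla_gromov} and \ref{lemma_entropy_mass_analytic}), its holomorphic motion inside $J_\lam$ (Lemma \ref{lemma_motion_hyp}), a Hurwitz argument producing a graph avoiding the postcritical set, and only then the web and the lamination via Proposition \ref{LemSF} and Poincar\'e recurrence, glued globally by Proposition \ref{prop_global_laminar}.
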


The implication \ref{item_teo_intro_lyap} $\Rightarrow$ \ref{item_teo_intro_mis}
is given by Theorem \ref{teo_misiurewicz_intro_pl} and
we shall prove in detail the implication \ref{item_teo_intro_lam} $\Rightarrow$ \ref{item_teo_intro_rep} (see Section \ref{section_hol_motions}).
The proof of an analogous statement (giving the holomorphic motion of all repelling $J$-cycles) on $\P^k$ (\cite{bbd2015})
needs some assumptions on the family
to avoid possible phenomena of non-linearizability (similar to some of the hypotheses required in \cite{dujardin2013stability}
in the setting of H\'enon maps).
The proof that we present, although
it gives a slightly weaker result, has the value to apply to every family.
Our strategy is a generalization to the space of holomorphic graphs
of the method, due to Briend-Duval \cite{BriendDuval1}, to recover the
equidistribution of the repelling periodic points with respect to the equilibrum measure from the fact
that all Lyapunov exponenents are strictly positive.

For the other
implications, the strategy is essentially the same as on $\P^k$, and minor work (if any) is needed to adapt
the proofs to the current setting. We
 shall thus just focus on the differences,
 referring the reader to \cite{tesi}
 for the omitted details.
 
Finally, let us just mention that, even for families of endomorphisms of $\P^k$, the
conditions in Theorem \ref{teo_equiv_intro}
are in general not equivalent to the Hausdorff continuity of the Julia sets (see \cite{bt_desboves}).
Moreover, these conditions do not define a dense subset of the parameter space
(see \cite{bt_desboves,dujardin2016non}). These are differences with respect to the dimension 1.

\subsection*{Aknowledgements} It is pleasure to thank my advisor Fran\c{c}ois Berteloot
for introducing me to this subject,
his patient guidance
and the careful reading of this paper.
I would like to thank also Charles Favre whose comments were very useful to improve both the content and the exposition
of this paper.

\section{Families of polynomial-like maps}\label{chapter_pl}

Unless otherwise stated, all the results presented here
are due to Dinh-Sibony
(see \cite{ds_allure, ds_cime}).

\subsection{Polynomial-like maps}

The starting definition is the following.

\begin{defi}\label{defi_pl}\index{Polynomial-like map}
 A \emph{polynomial-like} map is a proper holomorphic map $g: U \to V$, where $U \Subset V$ are open subsets of $\C^k$ and $V$ is convex.
\end{defi}

A polynomial-like map is in particular
a (branched)
holomorphic covering from $U$ to $V$, of a certain degree $d_t$ (the topological degree of $g$).
We shall always assume that $d_t \geq 2$.
The \emph{filled Julia set} $K$
is the subset of $U$ given by
$K:= \bigcap_{n\geq 0} g^{-n} \pa{U}.$
Notice that $g^{-1} (K) = K = g(K)$ and thus $(K,g)$ is a well-defined dynamical system.
Lifts of endomorphisms of $\P^k$ are polynomial-like maps. Moreover,
 polynomial-like maps are stable under small perturbations.

For a polynomial-like map $g$,
the knowledge of the topological degree 
is not enough to predict the volume growth of analytic subsets.
We are thus lead to consider more
general degrees than the topological one.
In the following definition,
 we denote by $\omega$ the standard K\"ahler form on $\C^k$. Moreover, recall that the mass of a positive $(p,p)$-current $T$
 on a Borel set $X$ is given by
$\norm{T}= \int_X T\wedge \omega^{k-p}$.

  \begin{defi}
  \label{defi_dynstardeg}\index{*-dynamical degrees}
 Given a polynomial-like map $g:U\to V$,
  the \emph{*-dynamical degree of order $p$}, for $0\leq p\leq k$,
  of $g$ is given by
  \[
  d_p^* (g) := \limsup_{n\to \infty}\sup_S \norm{\pa{g^n}_*  \pa{S}}^{1/n}_W
  \]
  where $W\Subset V$ is a neighbourhood of $K$ and the sup is taken over all positive closed $(k-p,k-p)$-currents
  of mass less or equal than $1$ on a fixed neighbourhood $W' \Subset V$ of $K$.
 \end{defi}
 
 It is quite straighforward to check that this definition
 does not depend on the particular
 neighbourhoods $W$ and $W'$ chosen for the computations.
 Moreover, the following hold:
 $d^*_0=1$,
 $d_k^* = d_t$,
$d_p^* (g^m) = \pa{d_p^* (g)}^m$
and
a relation $d_p^* < d_t$
is preserved by small perturbations.

\begin{teo}
\label{teo_construction_mu}
Let $g\colon U\to V$ be a polynomial-like map and $\nu$ be a probability measure supported on $V$
which is defined by an $L^1$ form. Then $d_t^{-n}\pa{g^n}^* \nu$
converge
to a probability measure $\mu$
which does not depend on $\nu$.
Moreover, for any psh function $\phi$
on a neighbourhood of $K$ the sequence $d_t^{-n} \pa{g^n}_* \phi$
converge to $\lla\mu, \phi\rra \in \{-\infty\} \cup \R$. The measure $\mu$
is ergodic, mixing and satisfies $g^* \mu = d_t \mu$.
\end{teo}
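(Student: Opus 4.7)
The plan is to work with the transfer operator $\Lambda := d_t^{-1} g_*$ acting on psh functions and obtain everything by duality. Since $g : U \to V$ is a proper branched holomorphic covering of degree $d_t$, for any psh function $\phi$ defined on a neighbourhood of $\overline{U}$, the pushforward
\[
(g_*\phi)(x) := \sum_{y \in g^{-1}(x)} \phi(y)
\]
(with multiplicity) is a well-defined psh function on $V$, and $\Lambda\phi$ is a fibrewise average. In particular $\inf\phi \leq \Lambda\phi \leq \sup\phi$ on a fixed neighbourhood of $K$, so $\Lambda$ preserves psh functions and weakly decreases oscillation; iterating is legitimate because $\overline{U} \subset V$.

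The central step is to show that $\Lambda^n \phi$ converges to a constant $c_\phi$ for every psh $\phi$ near $K$. The sequence is uniformly bounded on a neighbourhood of $K$, so by standard compactness of psh functions it admits $L^1_{\mathrm{loc}}$ cluster values, all psh with common oscillation $\alpha := \lim_n \mathrm{osc}(\Lambda^n \phi)$. A Hartogs/compactness argument, exploiting the strict sub-mean-value property of non-constant psh functions together with the branched covering structure (every fibre $g^{-1}(x)$ contains $d_t \geq 2$ points, counted with multiplicity, which accumulate on $K$ under backward iteration), shows that $\Lambda$ strictly contracts oscillation on any bounded family of non-constant psh functions. This forces $\alpha = 0$, so every cluster value is a constant, necessarily independent of the extracted subsequence, and the full sequence converges to $c_\phi \in \{-\infty\}\cup\mathbb{R}$.

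Granted this, define $\langle\mu, \phi\rangle := c_\phi$ for psh $\phi$, extend by linearity to differences of psh functions, and by density of such differences to all continuous functions near $K$; this yields a probability measure $\mu$. For any probability $\nu$ given by an $L^1$ form, duality gives
\[
\langle d_t^{-n}(g^n)^* \nu, \phi\rangle = \langle \nu, \Lambda^n \phi\rangle \longrightarrow c_\phi = \langle \mu, \phi\rangle
\]
for every psh $\phi$, hence weak convergence of $d_t^{-n}(g^n)^*\nu$ to $\mu$, together with independence from $\nu$. The relation $g^*\mu = d_t \mu$ is obtained by passing to the limit in $g^*\bigl(d_t^{-n}(g^n)^*\nu\bigr) = d_t \cdot d_t^{-(n+1)}(g^{n+1})^*\nu$, legitimate because $\mu$ does not charge pluripolar sets (as a consequence of $\mu$-integrability of bounded psh functions) and hence not the critical values. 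Mixing reduces, for psh $\phi, \psi$, to
\[
\int \phi \cdot (\psi \circ g^n)\, d\mu = \int \phi \cdot \Lambda^n \psi\, d\mu \longrightarrow \langle\mu,\phi\rangle\, \langle\mu,\psi\rangle,
\]
which follows at once from $\Lambda^n \psi \to c_\psi$; ergodicity is then a formal consequence of mixing.

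The main obstacle is the strict contraction of $\Lambda$ modulo constants in the pluripotential category: monotonicity of oscillation is cheap, but upgrading it to an actual contraction requires leveraging, via a non-trivial compactness/Hartogs argument, that a non-constant psh limit $\psi$ cannot be $\Lambda$-invariant up to a constant. Everything else — duality, extension to $C^0$, invariance, mixing — is formal once this pluripotential-theoretic core is in place.
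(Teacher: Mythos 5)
The paper offers no proof of this statement: it is quoted verbatim from Dinh--Sibony (the opening line of Section 2 attributes all results there to \cite{ds_allure, ds_cime}), so there is no in-text argument to compare yours against. Judged on its own, your proposal correctly reproduces the architecture of the Dinh--Sibony proof -- the operator $\Lambda = d_t^{-1} g_*$ on psh functions, convergence of $\Lambda^n\phi$ to a constant, and then duality for the convergence of $d_t^{-n}\pa{g^n}^*\nu$, invariance and mixing. But the entire analytic content of the theorem is the convergence $\Lambda^n\phi \to c_\phi$, and this is precisely the step you assert rather than prove ("a Hartogs/compactness argument \dots shows that $\Lambda$ strictly contracts oscillation"). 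Worse, oscillation is the wrong quantity to contract: there is no minimum principle for psh functions, the infimum is not continuous under $L^1_{loc}$ limits of psh functions, and the theorem explicitly allows $\lla\mu,\phi\rra=-\infty$, in which case the family $\set{\Lambda^n\phi}$ is not bounded in $L^1_{loc}$ and your compactness/cluster-value scheme breaks down entirely. The actual argument controls only the decreasing sequence $\sup_W \Lambda^n\phi$ (which does decrease strictly for non-constant $\phi$, by the maximum principle and $g^{-1}(W)\Subset W$), treats the case where this limit is $-\infty$ separately, and in the finite case must still show that every point's backward orbit samples the region where a cluster value drops below its supremum -- this is the genuinely non-trivial point, and "strict sub-mean-value property" does not deliver it.

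Two further justifications are incorrect. You derive $g^*\mu=d_t\mu$ from "$\mu$ does not charge pluripolar sets", but for a general polynomial-like map this is false -- the paper itself stresses, right after Definition \ref{defi_equilibrium_measure}, that $\mu$ may charge proper analytic subsets; integrability of psh functions against $\mu$ holds only under the large topological degree hypothesis, which Theorem \ref{teo_construction_mu} does not assume. (The invariance is nonetheless fine: $g_*$ with multiplicities maps continuous functions to continuous functions, so $g^*$ is weakly continuous on measures and one passes to the limit directly.) Similarly, your mixing computation needs $\int \psi\,\Lambda^n\phi \, d\mu \to c_\phi\int\psi\,d\mu$, i.e.\ convergence of $\Lambda^n\phi$ in $L^1(\mu)$; since $\mu$ can be singular with respect to Lebesgue measure, this does not follow from the $L^1_{loc}$ convergence you establish and requires a separate Hartogs-type argument together with the identity $\lla\mu,\Lambda^n\phi\rra=\lla\mu,\phi\rra$.
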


The convergence of $d_t^{-n} \pa{g^n}_* \phi$ in Theorem \ref{teo_construction_mu}
is in $L^p_{loc}$
for every $1\leq p<\infty$
if $\lla\mu, \phi\rra$ is finite, locally uniform otherwise.

\begin{defi}\label{defi_equilibrium_measure}\index{Equilibrium measure} \index{Julia set}
The measure $\mu$ given by Theorem \ref{teo_construction_mu} is called the \emph{equilibrium measure}
of $g$. The support of $\mu$ is the \emph{Julia set} of $g$, denoted with $J_g$.
\end{defi}
The assumption on $\nu$
to be defined by a $L^1$ form can be relaxed to just asking that $\nu$
does not charge pluripolar sets.
The following Theorem ensures that $\mu$ itself does not charge the critical set of $g$.
Notice that
$\mu$ may charge proper analytic subsets. This is a difference with respect to 
the case of endomorphisms of $\P^k$.

\begin{teo}
\label{teo_log_J_mu}
 Let $f:U\to V$ be a polynomial-like map of degree $d_t$.
Then $\lla\mu, \log \abs{\jac_g}\rra \geq \frac{1}{2}\log d_t$.
\end{teo}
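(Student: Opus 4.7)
The plan is to approximate $\mu$ by the pull-back probabilities $\mu_n := d_t^{-n}(g^n)^*\nu_0$, where $\nu_0$ is the normalized Lebesgue measure on a bounded open set $W$ with $K \subset W \Subset V$, and to combine a volume estimate with Jensen's and the AM--GM inequalities. By Theorem~\ref{teo_construction_mu}, $\mu_n \to \mu$, and for every psh $\phi$ the convergence $d_t^{-j}(g^j)_*\phi \to \lla\mu,\phi\rra$, once integrated against $\nu_0$, yields $\int \phi\, d\mu_j \to \lla\mu,\phi\rra$. The properness of $g \colon U \to V$ gives $g^{-n}(W) \subset U$, hence $\mathrm{Vol}(g^{-n}(W)) \leq \mathrm{Vol}(U)$. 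A change of variables through the $d_t^n$ local inverse branches of $g^n$ (off the measure-zero critical set, and recalling that $\abs{\jac_{g^n}}^2$ is the real Jacobian of $g^n$) rewrites this bound as
\[
\int_W A_n\, d\nu_0 \leq C, \qquad A_n(x) := \sum_{y \in g^{-n}(x)} \abs{\jac_{g^n}(y)}^{-2}, \quad C := \mathrm{Vol}(U)/\mathrm{Vol}(W).
\]

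The concavity of $\log$ (Jensen) gives $\int \log A_n\, d\nu_0 \leq \log C$, while AM--GM on the $d_t^n$ summands of $A_n(x)$ produces
\[
\log A_n(x) \geq n\log d_t - \frac{2}{d_t^n} \sum_{y \in g^{-n}(x)} \log \abs{\jac_{g^n}(y)}.
\]
Integrating against $\nu_0$ and using $\int \phi\, d\mu_n = d_t^{-n}\int_W \sum_y \phi(y)\, d\nu_0(x)$ with $\phi := \log\abs{\jac_{g^n}}$ yields
\[
\int \log\abs{\jac_{g^n}}\, d\mu_n \geq \tfrac{1}{2}\bigl(n\log d_t - \log C\bigr).
\]
The chain rule provides the cocycle $\log\abs{\jac_{g^n}} = \sum_{i=0}^{n-1} \log\abs{\jac_g} \circ g^i$, while the preimage-counting identity $g^i_* \circ (g^n)^* = d_t^i\,(g^{n-i})^*$ yields $g^i_* \mu_n = \mu_{n-i}$. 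Combining, the left-hand side equals $\sum_{j=1}^n \int \log\abs{\jac_g}\, d\mu_j$, so after division by $n$,
\[
\frac{1}{n}\sum_{j=1}^n \int \log\abs{\jac_g}\, d\mu_j \geq \tfrac{1}{2}\log d_t - \frac{\log C}{2n}.
\]

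Since $\log\abs{\jac_g}$ is psh, Theorem~\ref{teo_construction_mu} gives $\int \log\abs{\jac_g}\, d\mu_j \to \lla\mu,\log\abs{\jac_g}\rra$, and the Ces\`aro mean converges to the same limit, yielding $\lla\mu,\log\abs{\jac_g}\rra \geq \tfrac{1}{2}\log d_t$. The main delicate point is the a priori possibility $\lla\mu,\log\abs{\jac_g}\rra = -\infty$ (the function is $-\infty$ on the critical set, and in this setting $\mu$ may charge proper analytic subvarieties); this is ruled out by the inequality itself, for otherwise the Ces\`aro mean would tend to $-\infty$, contradicting the uniform lower bound.
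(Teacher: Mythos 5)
Your argument is correct, and it is essentially the standard proof: the paper itself states Theorem \ref{teo_log_J_mu} without proof, citing Dinh--Sibony, whose argument is exactly this combination of the volume bound $\mathrm{Vol}(g^{-n}(W))\leq \mathrm{Vol}(U)$, concavity of the logarithm applied to $\sum_{y\in g^{-n}(x)}\abs{\jac_{g^n}(y)}^{-2}$, the cocycle identity for $\jac_{g^n}$, and the convergence $d_t^{-j}(g^j)_*\phi\to\lla\mu,\phi\rra$ from Theorem \ref{teo_construction_mu}. Your closing remark correctly handles the only delicate point, namely that the uniform lower bound on the Ces\`aro means rules out $\lla\mu,\log\abs{\jac_g}\rra=-\infty$ even though $\mu$ may charge analytic sets.
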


A consequence of Theorem \ref{teo_log_J_mu} (by Parry Theorem \cite{parry1969entropy})
is that the equilibrium
measure has entropy at least $\log d_t$.
It is thus a measure of maximal entropy (see \cite{ds_cime}).
Another important consequence
is
the existence,
by Oseledets Theorem \cite{oseledets1968multiplicative}
of the \emph{Lyapunov exponents} $\chi_i (g)$
of a polynomial-like map
with respect to the equilibrium measure $\mu$.

\begin{defi}\label{defi_Lyapunov_exp_function}\index{Lyapunov exponent}\index{Lyapunov function}
 The \emph{Lyapunov function}
 $L(g)$ is the sum
 \[L (g)= \sum \chi_i (g).\]
\end{defi}

By Oseledets
and Birkhoff Theorems, 
it follows that $L(g) = \lla \mu, \log\abs{\jac}\rra$.
By Theorem \ref{teo_log_J_mu},
we thus have
$L(g) \geq \frac{1}{2} \log d_t$ for every polynomial-like map $g$.

\subsection{Maps of large topological degree}\label{section_pl_large_top_degree}

Recall that the *-dynamical degrees
were defined in Definition \ref{defi_dynstardeg}.

\begin{defi}\label{defi_large_topological_degree}\index{Large topological degree}
A polynomial-like map is of \emph{large topological degree} if $d_{k-1}^* < d_t$.
\end{defi}

 Notice that holomorphic endomorphisms
of $\P^k$
(and thus their polynomial-like lifts)
satify the above estimate. Morever,
a small perturbation of a polynomial-like map of large topological degree still satisfy this property.

The equilibrium measure of a polynomial-like map of large topological degree
integrates psh function, and thus in particular
does not charge pluripolar sets (see \cite[Theorem 2.33]{ds_cime}).

We end this section
recalling two
equidistribution
properties (\cite{ds_allure,ds_cime})
of
the equilibrium measure
 of
 a polynomial-like map of large topological degree.

\begin{teo}\label{teo_equidistribution}
 Let $g\colon U\to V$ be a polynomial-like map of large topological degree $d_t\geq 2$.
\begin{enumerate}
\item Let $R_n$
denote the set of repelling $n$-periodic points in the Julia set $J$. Then
\[
\frac{1}{d_t^n} \sum_{a\in R_n} \delta_a  \to \mu.
\]
\item There exists a proper analytic set $\Ee$ (possibly empty) contained in the postcritical set of $g$
such that
\[
d_t^{-n} \pa{g^n}^* \delta_a = \frac{1}{d_t^n} \sum_{g^{n} (b)=a} \delta_b  \to \mu
\]
if and only if $a$ does not belong to the orbit of $\Ee$.
\end{enumerate}
\end{teo}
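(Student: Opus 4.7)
The plan is to handle the two parts in turn. Both rest on the pull-back convergence from Theorem \ref{teo_construction_mu} together with the strict positivity of the Lyapunov exponents of $\mu$, which is where the large topological degree hypothesis plays its essential role. For part (2), I would set $\Ee := \{a \in V : d_t^{-n}(g^n)^*\delta_a \not\to \mu\}$ and aim to show it is a proper analytic subset contained in the orbit of the postcritical set. First I would check $\Ee$ is pluripolar: Theorem \ref{teo_construction_mu} gives $d_t^{-n}(g^n)_*\phi \to \lla \mu, \phi \rra$ in $L^p_{loc}$ for every psh $\phi$ on a neighbourhood of $K$, so outside a pluripolar set this convergence is pointwise, and evaluating at $a$ reads exactly as testing $d_t^{-n}(g^n)^*\delta_a$ against $\phi$. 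A density argument, using that in the large topological degree regime $\mu$ integrates psh functions, extends this to continuous test functions and yields pluripolarity of $\Ee$. Next I would record a weak backward invariance: if $g(a)\notin \Ee$ and $a\notin \Crit_g$, then $a\notin \Ee$. The final and most delicate step is to upgrade pluripolarity plus invariance to analyticity, cutting $\Ee$ down to a proper analytic subset by a rigidity argument specific to polynomial-like maps of large topological degree.

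For part (1) I would adapt the Briend-Duval scheme. The first step is to verify that all Lyapunov exponents of $\mu$ are strictly positive: Theorem \ref{teo_log_J_mu} already yields $L(g)\geq \tfrac{1}{2}\log d_t$, and the large topological degree condition $d_{k-1}^* < d_t$ is what upgrades this to strict positivity of each individual exponent, via a standard estimate relating the smallest exponent to $\log(d_t/d_{k-1}^*)$. Pesin theory then produces, on a set of full $\mu$-measure, inverse branches of $g^n$ defined on balls of uniformly positive radius that contract exponentially. Fixing a small ball $B$ of positive $\mu$-measure inside a Pesin set, the pull-back convergence forces at least $(1-o(1))d_t^n$ inverse branches of $g^n$ to send $B$ into itself, each contributing a fixed point by Banach --- necessarily a repelling $n$-periodic point in the Julia set. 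A covering and disjointness argument over a fine cover of $\Supp \mu$ then gives distinct orbits and the matching lower bound $\# R_n \geq (1-o(1))d_t^n$; the upper bound $\# R_n \leq d_t^n$ is elementary, and testing the counting measures against continuous functions concludes.

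The main obstacle is part (1). Three things must go right at once: the Pesin balls should cover a positive fraction of $\Supp \mu$ with quantitative control, the inverse branches produced over different Pesin balls should give genuinely distinct periodic orbits (rather than accidental coincidences), and the produced repelling points should lie in the Julia set --- as opposed to the spurious off-Julia repelling points noted after Definition \ref{defi_rep_asymptotically_move_hol}. A careful disjointness argument at the Pesin scale is what I would expect to do this work. In part (2), the analogous subtle point is promoting pluripolarity and weak invariance of $\Ee$ to analyticity; this is not purely pluripotential-theoretic and seems to require the specific algebraic structure of polynomial-like maps in the large topological degree regime.
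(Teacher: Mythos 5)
This theorem is not proved in the paper at all: it opens Section 2 with ``Unless otherwise stated, all the results presented here are due to Dinh--Sibony'', and Theorem \ref{teo_equidistribution} is explicitly introduced as a recollection of two equidistribution properties from \cite{ds_allure,ds_cime}. So there is no internal proof to compare your proposal against; what follows measures it against the Dinh--Sibony/Briend--Duval arguments the paper is citing. Your sketch of part (1) is essentially the right one (Pesin-type inverse branches from positive Lyapunov exponents, Banach fixed points in contracted preimages of a small ball, the constant-Jacobian relation $g^*\mu=d_t\mu$ forcing each shrinking preimage to meet $J=\Supp\mu$ so the resulting repelling point lies in $J$), and you correctly identify the delicate points. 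One structural remark: in the actual development the lower bound on the smallest exponent by $\frac{1}{2}\log\pa{d_t/d^*_{k-1}}$ is a \emph{consequence of the proof of part (2)} --- the paper says this verbatim right after the theorem --- so part (2) must be established first; it is not an independent ``standard estimate'' you can invoke before doing part (2).

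The genuine gap is in part (2). Defining $\Ee$ as the non-convergence locus, proving it pluripolar, and then ``upgrading pluripolarity plus weak invariance to analyticity by a rigidity argument'' is not a viable route: a totally invariant pluripolar set has no reason to be analytic, and no such upgrade exists in this generality. In the Dinh--Sibony proof the exceptional set is constructed \emph{directly as an analytic set} and only afterwards is the equivalence with non-convergence established. Concretely, one fixes $0<\delta<1$ and considers the sets $E_n$ of points $a$ for which at least $\delta d_t^n$ of the $d_t^n$ preimages of $a$ under $g^n$ (counted with multiplicity) lie on the critical set $C_g$; a degree/volume count shows each $E_n$ is analytic and contained in the postcritical set, the sequence is essentially decreasing, and $\Ee=\bigcap_n E_n$ stabilizes to a proper analytic set satisfying an invariance property of the type $g^{-1}(\Ee)\cap\mathrm{(postcritical)}\subset\Ee$. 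The convergence $d_t^{-n}\pa{g^n}^*\delta_a\to\mu$ for $a$ outside the orbit of $\Ee$ is then obtained by showing that asymptotically all preimages of such an $a$ admit inverse branches on a ball of controlled size (this is where $d^*_{k-1}<d_t$ enters, through Lemma-\ref{lemma_deg_dinamico_intorno}-type mass estimates controlling how often orbits approach $C_g$), and the failure of convergence on the orbit of $\Ee$ comes from the excess of critical preimages there. Without this explicit analytic construction your part (2) does not close, and since part (1) consumes the Lyapunov bound produced by it, the gap propagates to part (1) as well.
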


An important
consequence of the proof of (the second part of) Theorem
\ref{teo_equidistribution} is
that all Lyapunov exponents of a polynomial-like map of
large topological degree
are bounded below by $\frac{1}{2}\log{\frac{d_t}{d^*_{k-1}}}>0$.
This property will play a very important role in the
proof of
Theorem \ref{teo_misiurewicz_intro_pl}.
It is also crucial to
establish the existence of an equilibrium lamination from
the motion of the repelling points, see \cite{bbd2015, tesi}.

\subsection{Holomorphic families}\label{section_hol_families}

We now come to the main object of our study.

\begin{defi}\label{defi_pl_fam}
 Let M be a complex manifold and $\Uu \Subset \Vv$ be connected open subsets of $M \times \C^k$. Denote by $\pi_M$
 the standard projection $\pi_M: M \times \C^k \to M$. Suppose that 
 for every
 $\lambda  \in M$, the two sets $U_\lam := \Uu \cap \pi^{-1} (\lam)$ and $V_\lam := \Vv \cap \pi^{-1} (\lam)$
 satisfy $\emptyset \neq U_\lam \Subset V_\lam  \Subset \C^k$, that $U_\lam$ is connected and that $V_\lam$ is convex.
 Moreover, assume that $U_\lam$ and $V_\lam$ depend continuously on $\lam$ (in the sense of Hausdorff).
 A \emph{holomorphic family of polynomial-like maps}
 is a proper holomorphic map $f: \Uu  \to \Vv$ fibered over $M$, i.e., of the form
 \[
\begin{aligned}
f: & \quad \Uu & \to &   \quad \Vv &\\
& (\lambda, z) & \mapsto  &(\lambda, f_\lambda (z)).&
\end{aligned}
\]
\end{defi}

From the definition, $f$ has a well defined topological degree, that we shall always denote with $d_t$
and assume to be greater than 1.
In particular, each $f_\lam : U_\lam \to V_\lam$
is
a polynomial-like map, of degree $d_t$.
We shall denote by
$\mu_\lam$, $J_\lam$ and $K_\lam$
the equilibrium measure, the Julia set and the filled Julia set of $f_\lam$,
while $C_f$, $\jac_f$
and $\Crit_f$
will be the critical set, the determinant of the (complex)
jacobian matrix of $f$ and the integration current $dd^c \log \abs{\jac_f}$.
We may drop the subscript $f$
if no confusion arises.

 It is immediate to see that the filled Julia set $K_\lam$
 varies upper
 semicontinuously with $\lam$
 for the Hausdoff topology. This allows us,
 when dealing with local problems
 to
 assume that $V_\lam$ does not depend on $\lam$, i.e.,
 that $\Vv = M \times V$, with
 $V$ an open, convex and relatively compact subset of $\C^k$.
On the other hand, the Julia set is lower semicontinuous in $\lam$ 
for a family of maps of large topological degree (\cite{ds_cime}).

 We now recall the construction, due to Pham \cite{pham}, of an \emph{equilibrium current}
 for a family of polynomial-like maps. This is based on the following Theorem.
 We recall that a \emph{horizontal current} on a product space $M\times V$
 is a current whose support is contained in $M\times L$, where $L$ is some compact subset of $V$.
 We refer to \cite{federer1996geometric} (see also \cite{ds_cime,harvey1974characterization, siu1974analyticity})
 for the basics on slicing.
 
 \begin{teo}[Dinh-Sibony \cite{ds_geometry}, Pham \cite{pham}]\label{teo_tutto_slice_chapter1}
  Let $M$ and $V$
  two complex manifolds, of dimension $m$ and $k$.
  Let $\Rr$ be a horizontal positive closed $(k,k)$-current and $\psi$ a psh function on $M\times V$. Then
  \begin{enumerate}
   \item the slice $\lla \Rr, \pi, \lam \rra$ of $\Rr$ at $\lam$ with respect
   to the projection $\pi: M\times V \to M$ exists for every $\lam\in M$, and its mass is independend from $\lam$;
   \item the function $g_{\psi,\Rr }(\lam):= \lla \Rr, \pi, \lam \rra (\psi(\lam, \cdot))$
 is psh (or identically $-\infty$).
  \end{enumerate}
If
 $\lla \Rr, \pi,\lam_0 \rra (\psi(\lam_0, \cdot))  >-\infty$
 for some $\lam_0 \in M$, then
\begin{enumerate}
\setcounter{enumi}{2}
 \item the product $\psi \Rr$
 is well defined;
 \item for every
 $\Omega$ continuous
form of maximal degree compactly supported on $M$ we have
  \begin{equation}\label{eq_formula_slicing_chapter_1}
  \int_M \langle \Rr, \pi, \lambda \rangle (\psi) \Omega (\lam) = \langle \Rr \wedge \pi^* (\Omega), \psi \rangle.
 \end{equation}
 \end{enumerate}
 In particular, the pushforward $\pi_* (\psi\Rr)$
 is well defined and coincides with the psh function $g_{\psi,\Rr}$.
 \end{teo}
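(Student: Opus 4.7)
The plan is to reduce the statement to the smooth case via regularization in the fiber direction, using horizontality throughout to keep every pushforward well defined.

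For part (1), since $\operatorname{supp}\Rr \subset M \times L$ with $L \Subset V$ compact, the restriction of $\pi$ to $\operatorname{supp}\Rr$ is proper, so $\pi_*\Rr$ is a well defined positive closed $(0,0)$-current on $M$, hence a locally constant function; on a connected component we call this constant $c$. To build slices at every $\lambda$ I would regularize $\Rr$ by convolving in the $V$-direction with a smooth approximation of the identity, obtaining smooth horizontal positive closed currents $\Rr_\epsilon \to \Rr$ with pushforward constants $c_\epsilon \to c$. Each slice $\langle \Rr_\epsilon, \pi, \lambda\rangle$ is literally the restriction of $\Rr_\epsilon$ to the fiber, of mass $c_\epsilon$. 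The slice $\langle \Rr, \pi, \lambda\rangle$ is then recovered at every $\lambda$ as the weak limit of $\langle \Rr_\epsilon, \pi, \lambda\rangle$; upper semicontinuity of mass of weak limits of positive measures, combined with the Fubini identity $\int_B c = \int_B \operatorname{mass}\langle \Rr, \pi, \lambda\rangle\, d\operatorname{Leb}$ over small balls $B \subset M$, forces the mass to equal $c$ at every $\lambda$.

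For part (2), start with $\psi$ smooth psh. Since $\Rr$ is closed and the pushforward of horizontal currents commutes with $dd^c$, I would compute
$$dd^c g_{\psi, \Rr} = dd^c \pi_*(\psi \Rr) = \pi_*\bigl(dd^c \psi \wedge \Rr\bigr) \geq 0,$$
which shows $g_{\psi, \Rr}$ is psh. For general psh $\psi$, approximate by a decreasing sequence $\psi_n$ of smooth psh functions. Since each slice is a positive measure, monotone convergence yields $g_{\psi_n, \Rr}(\lambda) \downarrow g_{\psi, \Rr}(\lambda)$ at every $\lambda$, and a decreasing limit of psh functions is psh or identically $-\infty$. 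For parts (3) and (4), the slicing formula for smooth $\psi$ and smooth $\Rr_\epsilon$ is Fubini; one passes to the limit first in $\epsilon$ (using the mass control from (1)) and then in $n$ via monotone convergence on both sides. If $g_{\psi,\Rr}(\lambda_0) > -\infty$ for some $\lambda_0$, then $g_{\psi,\Rr}$ is an honest psh function, in particular $L^1_{\mathrm{loc}}$; the decreasing sequence $(\psi_n - \psi_1)\Rr$ of negative currents is therefore bounded in mass and admits a weak limit, which defines the product $\psi \Rr$ and yields $\pi_*(\psi \Rr) = g_{\psi,\Rr}$.

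The principal obstacle is ensuring slice existence at \emph{every} $\lambda$ with constant mass, while Federer's general slicing theorem gives only almost every $\lambda$. The horizontality hypothesis is essential at each step: it secures the fiberwise regularization, the properness of $\pi_*$, the commutation $dd^c \pi_* = \pi_* dd^c$, and the uniform mass control needed to pass to the limit in $\epsilon$ and in the psh approximation.
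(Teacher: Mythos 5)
This theorem is not proved in the paper at all: it is quoted from Dinh--Sibony \cite{ds_geometry} and Pham \cite{pham}, so your attempt can only be judged on its own merits. Your treatment of parts (2)--(4) is along the standard lines (pushforward commutes with $dd^c$ on horizontal currents, decreasing smooth psh approximation, monotone convergence on the slices, mass bound from the $L^1_{loc}$ limit of the $g_{\psi_n,\Rr}$), and modulo routine care about masses of horizontal currents it would go through \emph{once part (1) is available}.

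The genuine gap is in part (1), which is precisely the hard content of the theorem. Convolving $\Rr$ only in the $V$-direction (averaging over fiber translations) preserves positivity and closedness but does \emph{not} produce a smooth current: $\Rr_\epsilon$ remains as singular as $\Rr$ in the $\lambda$-direction, so its slice at a given $\lambda$ is not ``literally the restriction to the fiber'' and its existence at every $\lambda$ is exactly as problematic as for $\Rr$ itself. Worse, even granting smooth approximants, the assertion that $\langle\Rr,\pi,\lambda\rangle$ ``is recovered at every $\lambda$ as the weak limit of $\langle\Rr_\epsilon,\pi,\lambda\rangle$'' is not a proof: weak convergence of currents does not imply convergence (or even existence) of slices at a \emph{fixed} $\lambda$ --- Federer's theory only gives almost every $\lambda$, and the Fubini identity over small balls cannot upgrade an a.e.\ statement to an everywhere statement. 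The standard argument runs in the opposite logical order: one first shows that for $\phi$ psh (or smooth, written locally as a difference of psh functions such as $\phi = (\phi + C|z|^2) - C|z|^2$) the distribution $\pi_*(\phi\Rr)$ is a (difference of) psh function(s) on $M$, hence has a canonical pointwise value everywhere; for fixed $\lambda$ the map $\phi\mapsto$ this value is then a positive linear functional on test functions on $V$, i.e.\ a measure, and \emph{this} is the slice at $\lambda$. In other words, your part (2) must come before your part (1), and the regularization-in-the-fiber route does not close the loop.
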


 Consider now a family of polynomial-like maps $f:\Uu\to \Vv=M\times V$.
 Let $\theta$ be a smooth
probability measure compactly supported in $V$ and consider the (positive and closed)
smooth $(k,k)$-currents on $M \times V$ defined by induction as
\begin{equation}\label{eq_sn}
\begin{cases}
S_0  = \pi_V^* (\theta)\\
S_n   := \frac{1}{d_t} f^{*} S_{n-1} = \frac{1}{d_t^n} (f^{n})^{*} S_{0}.
\end{cases}
\end{equation}
The
$S_n$' are in particular horizontal positive closed $(k,k)$-currents on $M\times V$, whose slice
mass is equal to $1$.
Moreover,
since by definition we have $\la S_0, \pi, \lam \ra = \theta$ for every $\lam \in M$, we have that
$\la S_n, \pi , \lam \ra = \frac{1}{d_t^n} \pa {f_\lam^n}^* \theta$.
In particular, since every $f_\lam : U_\lam \to V$ is a polynomial-like map, for every $\lam \in M$ we have
$\la S_n, \pi , \lam \ra \to \mu_\lam$.
The following Theorem
ensures that the limits of the sequence $S_n$
have slices equal to $\mu_\lam$.

\begin{teo}[Pham]\label{teo_pham}
 Let $f:\Uu \to \Vv$ be a holomorphic family of
 polynomial-like maps. Up to a subsequence, the forms $S_n$ defined by \eqref{eq_sn} converge to
 a positive closed $(k,k)$-current $\Ee$
 on $\Vv$, supported on $\cup_{\lam} \set{\lam}\times K_\lam$,
 such that for every $\lambda \in M$
 the slice $\langle \Ee, \pi, \lambda \rangle$ exists and is equal to $\mu_\lambda$.
\end{teo}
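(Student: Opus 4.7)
The plan is to extract a limit $\Ee$ of the sequence $S_n$ by mass compactness, localize its support on the graphs of the filled Julia sets by using the compact support of $\theta$, and finally identify its slice at every $\lam$ with $\mu_\lam$ by combining the slicing formula \eqref{eq_formula_slicing_chapter_1} with the pointwise convergence $d_t^{-n}(f_\lam^n)^*\theta \to \mu_\lam$ provided by Theorem \ref{teo_construction_mu}.

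Working locally, we may assume $\Vv = M \times V$. By construction the $S_n$ are smooth horizontal positive closed $(k,k)$-currents, and their slices $\la S_n,\pi,\lam\ra = d_t^{-n}(f_\lam^n)^*\theta$ are probability measures on $V_\lam$. Theorem \ref{teo_tutto_slice_chapter1} then gives that the slice mass of each $S_n$ is constant and equal to $1$, yielding a uniform mass bound on $M' \times V$ for any $M' \Subset M$. Standard compactness of positive closed currents of uniformly bounded mass produces a weakly convergent subsequence $S_{n_j} \to \Ee$. Since $\Supp S_n \subset \{(\lam,z) : f_\lam^n(z)\in \Supp\theta\}$ and $\theta$ is compactly supported in $V$, the fibres of $\Supp S_n$ decrease to $K_\lam = \bigcap_n f_\lam^{-n}(V)$; upper semicontinuity of supports under weak convergence gives $\Supp \Ee \subset \bigcup_\lam \{\lam\}\times K_\lam$.

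For the identification of the slices, fix a smooth test function $\phi$ on $V$ and a smooth compactly supported $(m,m)$-form $\Omega$ on $M$, with $m = \dim M$. The slicing identity \eqref{eq_formula_slicing_chapter_1} applied to each $S_{n_j}$ reads
\[
\int_M \la S_{n_j},\pi,\lam\ra(\phi)\,\Omega(\lam) \;=\; \la S_{n_j},\phi\,\pi^*\Omega\ra \;\longrightarrow\; \la \Ee,\phi\,\pi^*\Omega\ra \;=\; \int_M \la \Ee,\pi,\lam\ra(\phi)\,\Omega(\lam).
\]
For every fixed $\lam$ the values $\la S_n,\pi,\lam\ra(\phi)$ are bounded by $\norm{\phi}_\infty$ and converge to $\mu_\lam(\phi)$ by Theorem \ref{teo_construction_mu}, so dominated convergence forces the left-hand side to also converge to $\int_M \mu_\lam(\phi)\,\Omega(\lam)$. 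Comparing the two limits, $\la\Ee,\pi,\lam\ra(\phi) = \mu_\lam(\phi)$ as distributions in $\lam$; letting $\phi$ run over a countable family of test functions separating probability measures, this already gives $\la\Ee,\pi,\lam\ra = \mu_\lam$ for almost every $\lam \in M$.

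The main obstacle is upgrading this almost-everywhere equality to equality at \emph{every} $\lam \in M$. The natural way to do this is to specialise the test functions to smooth psh ones $\phi$. On the one hand, part (2) of Theorem \ref{teo_tutto_slice_chapter1} applied to $\Ee$ makes $\lam \mapsto \la\Ee,\pi,\lam\ra(\phi)$ plurisubharmonic; on the other hand, the same theorem applied to each smooth $S_n$ together with the uniform upper bound shows that $\lam \mapsto \mu_\lam(\phi)$ coincides almost everywhere with a plurisubharmonic function. Two plurisubharmonic functions that agree almost everywhere are identical, so the equality extends to every $\lam$ for every smooth psh $\phi$. Writing an arbitrary smooth test function on a bounded neighbourhood of the $K_\lam$ as a difference of two smooth psh functions (by adding and subtracting $C\norm{z}^2$ for a large constant $C$) propagates the equality to all smooth test $\phi$, and hence to the measure equality $\la\Ee,\pi,\lam\ra = \mu_\lam$ for every $\lam\in M$.
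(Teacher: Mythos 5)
The paper does not actually prove this statement: it is recalled from Pham's work (and Dinh--Sibony), so there is no internal proof to compare against. Judged on its own terms, your proposal is sound up to and including the almost-everywhere identification of the slices: the extraction of $\Ee$ by compactness of horizontal positive closed currents with bounded slice mass, the localization of the support on $\cup_\lam\{\lam\}\times K_\lam$, and the combination of the slicing formula with dominated convergence and the pointwise convergence $\la S_n,\pi,\lam\ra\to\mu_\lam$ are all correct and standard.

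The gap is in the final step, which is precisely the heart of the theorem. From Theorem \ref{teo_tutto_slice_chapter1} applied to the $S_n$ you get that the psh functions $\phi_n(\lam):=\la S_n,\pi,\lam\ra(\phi)$ converge pointwise to $\la\mu_\lam,\phi\ra$ and (being uniformly bounded above) converge in $L^1_{loc}$ to a psh function $\psi$ with $\psi=\la\mu_\cdot,\phi\ra$ almost everywhere. But a pointwise limit of psh functions need not be psh: Hartogs' lemma only gives $\la\mu_\lam,\phi\ra\le\psi(\lam)$ for \emph{every} $\lam$, with equality off a (possibly nonempty) pluripolar set. Your appeal to ``two plurisubharmonic functions that agree almost everywhere are identical'' therefore identifies $\la\Ee,\pi,\lam\ra(\phi)$ with $\psi(\lam)$ everywhere, not with $\la\mu_\lam,\phi\ra$; the latter identification is exactly what must be proved, and it is not available because $\lam\mapsto\la\mu_\lam,\phi\ra$ is not known to be upper semicontinuous a priori (asserting its plurisubharmonicity at this stage would be circular, since in the paper that fact is \emph{deduced} from Theorem \ref{teo_pham}). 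The subsequent decomposition $\phi=\phi_1-\phi_2$ into psh pieces then fails: one-sided inequalities $\la\mu_\lam,\phi_i\ra\le\la\Ee,\pi,\lam\ra(\phi_i)$ do not combine into an equality for the difference. To close the argument one needs an additional ingredient ruling out a loss of value at exceptional parameters --- e.g.\ establishing the reverse inequality for a single strictly psh test function such as $|z|^2$ and propagating it, or exploiting the total invariance $d_t^{-1}f_\lam^*\la\Ee,\pi,\lam\ra=\la\Ee,\pi,\lam\ra$ --- which is the content of Pham's actual proof and is missing here.
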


\begin{defi}\label{defi_equilibrium_current}\index{Equilibrium current}
An \emph{equilibrium current} for $f$
is a
positive closed current $\Ee$ on $\Vv$, supported on $\cup_\lam \set{\lam}\times K_\lam$,
such that $\lla \Ee, \pi, \lam\rra = \mu_\lam$
for every $\lam \in M$.
\end{defi}

Given an equilibrium current $\Ee$ for $f$,
the product $\log\abs{\jac} \cdot \Ee$
(and so also the intersection $\Ee \wedge \Crit_f = dd^c (\log \abs{\jac} \cdot \Ee)$)
is thus well defined (by Theorems \ref{teo_tutto_slice_chapter1} and \ref{teo_log_J_mu}).
Moreover,
the distribution $\pi_* \pa{\log\abs{\jac}\cdot \Ee}$
is represented by the (plurisubharmonic)
function
$\lam \mapsto   \lla \mu_\lam, \log\abs{\jac(\lam, \cdot)} \rra$.
Notice that, while the product $\log\abs{\jac}\cdot \Ee$
a priori depends on the particular equilibrium current $\Ee$, the pushforward by $\pi$ is independent from the
particular choice
(by \eqref{eq_formula_slicing_chapter_1}).
By Oseledets
and Birkhoff
theorems
the function
$\lam \mapsto   \lla \mu_\lam, \log \abs{\jac (\lam, \cdot)} \rra$
coincides with the
Lyapunov function
$L(\lam)$, i.e., the sum
of the Lyapunov exponents of $f_\lam$
with respect to $\mu_\lam$ (see Definition \ref{defi_Lyapunov_exp_function}).
The following definition is then well posed.

 \begin{defi}\index{Bifurcation current}\label{defi_bifurcation_current}
 Let $f:\Uu \to \Vv$
 be a holomorphic family of polynomial-like maps.
 The \emph{bifurcation current}
 of $f$ is the positive closed $(1,1)$-current on $M$
 given by
 \begin{equation}\label{formula_ddcl}
 T_{bif} :=dd^c L(\lambda) = \pi_* \left(\Crit_f\wedge \Ee\right),
 \end{equation}
  where $\Ee$ is any equilibrium current for $f$.
  \end{defi}

The following result gives an approximation of the current $u\Ee$ for $u$ psh, that we shall need in Section
\ref{section_misiurewicz_in_support}.

\begin{lemma}\label{lemma_intersezioni_tendono}
 Let $f:\Uu\to \Vv=M\times V$
 be a holomorphic family of polynomial-like maps.
 Let $\theta$
 be a smooth positive measure compactly supported on $V$. Let $S_n$ be as in \eqref{eq_sn} and $\Ee$
 be any equilibrium current for $f$.
 Let $u$ be a psh function on $M \times V$ and assume that there exists
 $\lam_0 \in M$
 such that
 $\lla \mu_{\lam_0}, u(\lam_0, \cdot) \rra > -\infty$.
 Then, for every continuous form
$\Omega$ of maximal degree and compactly supported on
$M$, we have
 \begin{equation}\label{eq_convergenza_intersezione}
\lla u S_n , \pi^{*}(\Omega) \rra \to \lla u \Ee , \pi^{*}(\Omega) \rra,
\end{equation}
where the right hand side is well defined by Theorem
\ref{teo_tutto_slice_chapter1}.
\end{lemma}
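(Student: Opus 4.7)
The plan is to use the slicing formula \eqref{eq_formula_slicing_chapter_1} of Theorem \ref{teo_tutto_slice_chapter1} to rewrite both sides of \eqref{eq_convergenza_intersezione} as integrals on $M$ of psh functions, and then to deduce the convergence from the fibrewise convergence of $d_t^{-n}(f_\lam^n)_*u(\lam,\cdot)$ provided by Theorem \ref{teo_construction_mu}.
First, I would apply \eqref{eq_formula_slicing_chapter_1} both to $\Rr = \Ee$ (legitimate because $\lla \mu_{\lam_0}, u(\lam_0, \cdot)\rra > -\infty$, so Theorem \ref{teo_tutto_slice_chapter1} gives that $u\Ee$ is well defined) and to $\Rr = S_n$ (trivial since $S_n$ is smooth). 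Setting
\[
g_\infty(\lam) := \lla \mu_\lam, u(\lam,\cdot)\rra, \qquad g_n(\lam) := \lla \lla S_n, \pi, \lam\rra, u(\lam,\cdot)\rra,
\]
this yields
\[
\lla u\Ee, \pi^* \Omega\rra = \int_M g_\infty(\lam)\,\Omega(\lam), \qquad \lla u S_n, \pi^*\Omega\rra = \int_M g_n(\lam)\,\Omega(\lam),
\]
with $g_n, g_\infty$ psh on $M$ and $g_\infty \not\equiv -\infty$ by hypothesis. Using the identity $\lla (f_\lam^n)^*\theta, \phi\rra = \lla \theta, (f_\lam^n)_*\phi\rra$ valid for proper maps, together with $\lla S_n, \pi, \lam\rra = d_t^{-n}(f_\lam^n)^*\theta$, one can rewrite $g_n(\lam) = d_t^{-n}\int_V (f_\lam^n)_* u(\lam,\cdot)\, d\theta$.

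The second step is to establish pointwise convergence $g_n(\lam)\to g_\infty(\lam)$ for every $\lam \in M$. Fixing $\lam$ and applying Theorem \ref{teo_construction_mu} to the polynomial-like map $f_\lam$ and the psh function $u(\lam,\cdot)$, the sequence $d_t^{-n}(f_\lam^n)_* u(\lam,\cdot)$ converges to the constant $\lla \mu_\lam, u(\lam,\cdot)\rra$, in $L^1_{loc}(V)$ if this value is finite and locally uniformly to $-\infty$ otherwise. Since $\theta$ is smooth and compactly supported in $V$, integration against $\theta$ then gives $g_n(\lam)\to g_\infty(\lam)$ in both cases.

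The main obstacle is the third step, where I would upgrade pointwise convergence of $(g_n)$ to convergence in $L^1_{loc}(M)$, which is enough to pass to the limit against the continuous compactly supported form $\Omega$. On any compact $K \Subset M$, one can pick $C >0$ with $u \leq C$ on $K \times V$; since the pushforward of a function bounded above by $C$ by a map of topological degree $d_t^n$ is bounded above by $d_t^n C$, one obtains $g_n \leq C$ on $K$ uniformly in $n$. The psh functions $g_n$ are therefore locally uniformly bounded above and converge pointwise to the psh function $g_\infty \not\equiv -\infty$; the standard compactness theorem for psh functions then forces $g_n \to g_\infty$ in $L^1_{loc}(M)$, since any subsequential $L^1_{loc}$-limit must be psh, agree with $g_\infty$ almost everywhere, and hence coincide with $g_\infty$ everywhere. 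Integration against $\Omega$ then yields \eqref{eq_convergenza_intersezione}. The expected difficulty here is purely in justifying the $L^1_{loc}$ upgrade, which is handled by combining the uniform upper bound on $u$ with the non-triviality of the limit at $\lam_0$.
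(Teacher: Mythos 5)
Your proof is correct and follows essentially the same route as the paper's: apply the slicing formula to both $S_n$ and $\Ee$, establish pointwise convergence of the psh slice functions $\lam\mapsto\lla S_n,\pi,\lam\rra(u(\lam,\cdot))$ via Theorem \ref{teo_construction_mu}, and upgrade to $L^1_{loc}$ convergence using the local upper bound on $u$ together with the compactness theorem for psh functions. Your third step merely spells out in more detail the $L^1_{loc}$ upgrade that the paper states briefly.
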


Notice that the assumption $\lla \mu_{\lam_0}, u(\lam_0, \cdot) \rra > -\infty$ at some $\lam_0$
is automatic if the family is of large topological degree, see 
\cite[Theorem 2.33]{ds_cime}.
Moreover, notice that \eqref{eq_convergenza_intersezione} holds without the need of taking the subsequence
(and the right hand side is in particular independent from the subsequence used to compute $\Ee$).
Finally, we do not need to restrict $M$ to get the statement since $\Omega$ is compactly supported.
This also follows from the compactness of
horizontal positive closed currents with bounded slice mass, see \cite{ds_geometry}.

\begin{proof}
We can suppose that $\Omega$ is a positive volume form,
since we can decompose it in its positive and negative parts $\Omega = \Omega^+ - \Omega^-$
and prove the statement for $\Omega^+$ and $\Omega^-$ separately.
Moreover, by means of a partition of unity on $M$, we can also assume that
$\Ee$ is horizontal.
 By Theorem \ref{teo_tutto_slice_chapter1},
 the product
 $u\Ee$ is well defined
 and the identity
 \eqref{eq_formula_slicing_chapter_1}
 holds with both $\Rr=\Ee$ or $S_n$ and $\psi =u$.
 So, it suffices to prove that
 \begin{equation}\label{eq_convergenza_nuova}
 \int_M \langle S_{n}, \pi, \lambda \rangle (u) \Omega (\lambda) \to \int_M \langle \Ee, \pi, \lambda \rangle (u) \Omega (\lambda).
 \end{equation}
 The assertion then follows since
 the slices of $\Ee$, and thus also the right hand side, are independent from the
 particular equilibrium current chosen.
 
 Set $\phi_{n} (\lam):=\langle S_{n}, \pi, \lambda \rangle \left(u(\lambda, \cdot)\right)$ and
 $\phi(\lambda) := \langle \Ee, \pi, \lambda \rangle \left(u(\lambda, \cdot)\right) = \langle \mu_\lambda, u(\lambda, \cdot) \rangle$.
 By
 Theorem \ref{teo_tutto_slice_chapter1},
 the $\phi_{n}$'s
 and $\phi$ are psh functions on $M$.
 Moreover, at $\lam$ fixed, we have (recalling the definition \eqref{eq_sn} of the $S_n$'s
 and the fact
 that $d^{-n}_t \pa{f_\lam^n}_* u(\lam, \cdot) \to \langle \mu_\lam, u(\lam, \cdot)\rangle$ since
 $u(\lam, \cdot)$ is psh, see Theorem \ref{teo_construction_mu})
 \[
 \begin{aligned}
 \phi_{n} (\lam) &= \lla S_{n}, \pi, \lam \rra u (\lam,\cdot)
=  \lla \frac{1}{d_t^{n}}\pa{f_\lam^{n}}^* (\theta) , u (\lam,\cdot)\rra=
  \lla \theta, \frac{1}{d_t^n} \pa{f^n_\lam}_* u(\lam,\cdot)\rra\\
 &\to
 \lla \theta, \lla \mu_\lam, u(\lam,\cdot)\rra \rra=\lla \mu_\lam, u(\lam,\cdot)\rra = \phi (\lam).
 \end{aligned}
 \]
 Since
 $u$
 is upper semicontinuous (and thus locally bounded)
 all the $\phi_n$'s are
 bounded from above. This, together with the fact that they 
 converge pointwise to $\phi$, gives that the convergence $\phi_n \to\phi$ happens
 in $L^1_{loc}$, and the assertion is proved.
\end{proof}

 \begin{cor}\label{cor_lemma_convergenza_large_top_degree}
  Let $f:\Uu \to \Vv=M\times V\subset \C^m \times \C^k$
  be a holomorphic family of polynomial-like maps.
  Let $\Ee$
  be an equilibrium current and $S_n$
  be a sequence of smooth forms
  as in \eqref{eq_sn}.
  Then for every smooth $(m-1,m-1)$-form $\Omega$
  compactly supported on $M$ we have
  \[
  \lla \Crit_f  \wedge S_n , \pi^{*}(\Omega) \rra \to \lla \Crit_f \wedge \Ee , \pi^{*}(\Omega) \rra.
  \]
 \end{cor}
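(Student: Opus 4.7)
The plan is to reduce the corollary to Lemma \ref{lemma_intersezioni_tendono} by writing $\Crit_f=dd^c \log\abs{\jac_f}$ and integrating by parts, transferring the two $d$'s from the integration current onto the test form $\Omega$.

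Concretely, I would first verify that the hypothesis of Lemma \ref{lemma_intersezioni_tendono} is satisfied by the psh function $u(\lam,z):=\log\abs{\jac_f(\lam,z)}$. Indeed, for every $\lam\in M$, Theorem \ref{teo_log_J_mu} gives
\[
\lla \mu_\lam, \log\abs{\jac_{f_\lam}(\lam,\cdot)} \rra \geq \tfrac{1}{2}\log d_t > -\infty,
\]
so the product $u \Ee$ is well defined and the conclusion of Lemma \ref{lemma_intersezioni_tendono} applies for every continuous form of maximal degree compactly supported on $M$. In particular, applied to the maximal-degree smooth compactly supported form $dd^c \Omega$, it gives
\[
\lla \log\abs{\jac_f}\cdot S_n,\pi^*(dd^c \Omega)\rra \to \lla \log\abs{\jac_f}\cdot \Ee,\pi^*(dd^c \Omega)\rra.
\]

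Next I would perform the integration by parts on both sides. Since each $S_n$ is smooth and closed, $\log\abs{\jac_f}\cdot S_n$ is a well defined current of locally integrable coefficients and
\[
dd^c\bigl(\log\abs{\jac_f}\cdot S_n\bigr) = dd^c\log\abs{\jac_f}\wedge S_n = \Crit_f\wedge S_n,
\]
so that $\lla \log\abs{\jac_f}\cdot S_n,\pi^*(dd^c\Omega)\rra = \lla \Crit_f\wedge S_n,\pi^*(\Omega)\rra$, using $dd^c \pi^* \Omega = \pi^* dd^c \Omega$. The analogous equality holds with $\Ee$ in place of $S_n$, because by Theorem \ref{teo_tutto_slice_chapter1} the product $\log\abs{\jac_f}\cdot \Ee$ is well defined (thanks to the integrability above) and $\Ee$ is closed, whence $dd^c(\log\abs{\jac_f}\cdot \Ee)=\Crit_f\wedge \Ee$. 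Chaining the three displayed identities yields the claimed convergence.

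The only delicate point is the legitimacy of the integration by parts against $\Ee$, but this is exactly what Theorem \ref{teo_tutto_slice_chapter1} provides once the slice integrability of $\log\abs{\jac_f}$ has been secured by Theorem \ref{teo_log_J_mu}; everything else is purely formal. Note in particular that the statement does not require the large topological degree hypothesis, since the integrability lower bound $\tfrac12\log d_t$ holds for any polynomial-like map.
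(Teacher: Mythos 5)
Your proof is correct and is exactly the argument the paper intends: the Corollary is deduced from Lemma \ref{lemma_intersezioni_tendono} by taking $u=\log\abs{\jac_f}$ (whose slice integrability is guaranteed by Theorem \ref{teo_log_J_mu}), testing against $\pi^*(dd^c\Omega)$, and using that $\Crit_f\wedge S_n=dd^c(\log\abs{\jac_f}\cdot S_n)$ and, by definition, $\Crit_f\wedge\Ee=dd^c(\log\abs{\jac_f}\cdot\Ee)$. Your observation that the large topological degree hypothesis is not needed here is also consistent with the paper's remarks.
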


 \section{Misiurewicz parameters belong to $\mbox{Supp } dd^c L$}
\label{section_misiurewicz_in_support}

In this section we prove Theorems \ref{teo_misiurewicz_intro_pl} and \ref{teo_massa_crit_intro}.
The idea will be to relate the mass of
$dd^c L$
on a given open set $\Lambda$
of the parameter space
with the growth of the mass of the currents $f^n_{*} [C]$ on the vertical set $\Vv \cap \pi_M^{-1} (\Lambda)$.
Then, we will show how the presence of a Misiurewicz parameter allows us to get the desired estimate for the growth
of the critical volume,
permitting to conclude.
We shall need the following lemma, whose proof is a simple adaptation of the one of \cite[Proposition 2.7]{ds_cime}.

\begin{lemma}\label{lemma_deg_dinamico_intorno}
 Let $f: \Uu \to \Vv$ be a holomorphic
 family of polynomial-like maps. 
 Let $\delta >d_p^* (f_{\lam_0})$.
 There exists a constant $C$ such that,
 for $\lam$ sufficiently close to $\lam_0$, we have
$
 \norm{\pa{f_\lam^n}_* (S)}_{U_\lam} \leq C \delta^n
$
 for every $n\in \N$ and every closed positive $(k-p,k-p)$-current $S$ of mass less or equal than to $1$ on $U_\lam$.
\end{lemma}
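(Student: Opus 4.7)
The plan is to adapt the proof of the analogous statement for a single polynomial-like map \cite[Proposition 2.7]{ds_cime} to the family setting; the new ingredient is uniform control on the pushforward operators in a neighbourhood of $\lam_0$. Fix $\delta_0$ with $d_p^*(f_{\lam_0}) < \delta_0 < \delta$. By the definition of the $*$-dynamical degree and the standard submultiplicativity argument for the $\limsup$, one finds neighbourhoods $W' \Subset W \Subset V$ of $K_{\lam_0}$ and an integer $n_0 \geq 1$ such that
\[
\| (f_{\lam_0}^{n_0})_* S \|_W \leq \delta_0^{n_0}
\]
for every positive closed $(k-p,k-p)$-current $S$ of mass at most $1$ on $W'$. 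The upper semicontinuity of $K_\lam$ and the continuous variation of $U_\lam$ let us arrange, up to shrinking to a neighbourhood of $\lam_0$, that $W'$ is still a neighbourhood of $K_\lam$ and that $U_\lam \subset W$ for all $\lam$ under consideration.

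The central step is to promote this to a uniform estimate in $\lam$. The set $\Kk_0$ of positive closed $(k-p,k-p)$-currents on $W'$ of mass at most $1$ is weakly sequentially compact by the mass bound. For each fixed $S \in \Kk_0$, the map $\lam \mapsto (f_\lam^{n_0})_* S$ is weakly continuous, while the mass of a positive current on the \emph{fixed} open set $W$ is upper semicontinuous with respect to weak convergence. Combining these two facts with the compactness of $\Kk_0$, the function $\lam \mapsto \sup_{S \in \Kk_0} \| (f_\lam^{n_0})_* S \|_W$ is upper semicontinuous at $\lam_0$; since its value there is $\leq \delta_0^{n_0} < \delta^{n_0}$, it stays $\leq \delta^{n_0}$ on a neighbourhood $N$ of $\lam_0$.

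Once this uniform estimate is in place the conclusion follows by iteration. The inclusion $W' \Subset W$ gives $\| (f_\lam^{n_0})_* S \|_{W'} \leq \| (f_\lam^{n_0})_* S \|_W \leq \delta^{n_0}$, so $(f_\lam^{n_0})_* S / \delta^{n_0}$ again lies in $\Kk_0$, and induction yields $\| (f_\lam^{q n_0})_* S \|_W \leq \delta^{q n_0}$ for every $q \geq 1$. For a general $n = q n_0 + r$ with $0 \leq r < n_0$, a uniform operator bound for $(f_\lam^r)_*$ over $r < n_0$ and $\lam$ in a relatively compact subset of $N$ absorbs into a single constant $C$, giving $\| (f_\lam^n)_* S \|_W \leq C\delta^n$. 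The inclusion $U_\lam \subset W$ then provides the announced bound $\|(f_\lam^n)_* S\|_{U_\lam} \leq C \delta^n$.

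The main obstacle is the joint uniformity in $S \in \Kk_0$ and in $\lam$ in the upper semicontinuity step; this is precisely what is afforded by the weak compactness of the unit mass ball of positive closed currents together with the upper semicontinuity of mass on a fixed open set. A minor technical point, resolved by the nested choice of $W'$, $W$ and the inclusion $U_\lam \subset W$, is the correspondence between the hypothesis ``mass $\leq 1$ on $U_\lam$'' appearing in the lemma and the hypothesis ``mass $\leq 1$ on $W'$'' used in the definition of the $*$-dynamical degree.
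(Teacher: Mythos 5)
Your overall strategy is the right one, and it is exactly what the paper intends: the paper offers no argument beyond the remark that the lemma is ``a simple adaptation'' of \cite[Proposition 2.7]{ds_cime}, and your scheme --- a single-iterate estimate at $\lam_0$ coming from the $\limsup$ in the definition of $d_p^*$, propagated to nearby $\lam$ by compactness of the unit mass ball of positive closed currents, then iterated and corrected by a bounded factor for the remainder $n=qn_0+r$ --- is that adaptation. Two technical points in the write-up need repair, though neither threatens the scheme. First, the mass of a positive current on a fixed \emph{open} set is \emph{lower}, not upper, semicontinuous under weak convergence (mass sitting at points $x_j\in W$ with $x_j\to x\in\partial W$ shows upper semicontinuity fails); upper semicontinuity holds on compact sets. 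The standard fix is to establish the base estimate at $\lam_0$ on a slightly larger open set $W''\supset \overline W$ (legitimate, since $d_p^*$ does not depend on the neighbourhoods used to compute it) and to run the contradiction argument through $\limsup_j\norm{(f_{\lam_j}^{n_0})_*S_j}_{\overline W}\le \norm{(f_{\lam_0}^{n_0})_*S}_{\overline W}\le\norm{(f_{\lam_0}^{n_0})_*S}_{W''}\le\delta_0^{n_0}$. Note also that what this argument really uses is \emph{joint} sequential continuity of $(\lam,S)\mapsto (f_\lam^{n_0})_*S$ --- separate continuity in $\lam$ plus compactness in $S$ does not by itself give upper semicontinuity of the supremum --- but joint continuity does hold here, by pairing against test forms and using that $(f_{\lam}^{n_0})^*\phi\to(f_{\lam_0}^{n_0})^*\phi$ in $C^\infty$ on compact sets together with the uniform mass bound on the $S_j$. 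Second, the induction $\norm{(f_\lam^{qn_0})_*S}_W\le\delta^{qn_0}$ tacitly uses the identity $(f_\lam^{n_0})_*\bigl((f_\lam^{n_0})_*S|_{W'}\bigr)=(f_\lam^{2n_0})_*S$ on $W$, together with the fact that the restriction to $W'$ of the pushed-forward current is still closed; this requires $f_\lam^{-n_0}(W)\subset W'$, uniformly in $\lam$ near $\lam_0$. Since $f_\lam^{-n}(U_\lam)$ decreases to $K_\lam$ and everything varies continuously, this is arranged by taking $n_0$ large and shrinking the parameter neighbourhood, but it should be stated.
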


The following Theorem (which proves Theorem \ref{teo_massa_crit_intro})
gives the relation between the mass of $dd^c L$
and the growth of the mass of $\pa{f^n}_* \Crit_f$.
We recall that $\Crit_f = \log \abs{\jac_f}$
is the integration on the critical set of $f$, counting the multiplicity.
We set
$\Uu_\Lambda := \Uu\cap \pa{\Lambda \times \C^k}$ and $\Vv_\Lambda := \Vv\cap \pa{\Lambda \times \C^k}$.

\begin{teo}\label{teo_mis2_critical_volume_growth}
Let $f:\Uu\to\Vv$
be a holomorphic family of polynomial-like maps. Set
$d_{k-1}^*:= \sup_{\lam \in M} d_{k-1}^* (f_\lam)$, and assume that $d_{k-1}^*$ is finite. Let $\delta$
be greater than $d_{k-1}^*$.
Then for any open subset $\Lambda \Subset M$
there exist
positive constants $c'_1, c_1$
and $c_2$ such that, for every $n\in \N$,
\[c'_1 \norm{dd^c L}_\Lambda d_t^{n} 
\leq
\norm{\pa{f^{n}}_* \Crit_f}_{ \Uu_\Lambda}
\leq c_1 \norm{dd^c L}_\Lambda d_t^{n} + c_2
 \delta^{n}.\]
In particular, if
\[
\limsup_{n\to \infty }\frac{1}{n} \log \norm{\pa{f^{n}}_* \Crit_f }_{\Uu_\Lambda}> \log d_{k-1}^*,
\]
then $\Lambda$
intersects the bifurcation locus.
\end{teo}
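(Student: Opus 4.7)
The plan is to compare the mass of $(f^n)_*\Crit_f$ on the slab $\Uu_\Lambda$ to the mass of $T_{bif} = dd^c L$ on $\Lambda$ by approximating the equilibrium current $\Ee$ with the smooth forms $S_n = d_t^{-n}(f^n)^* S_0$ from \eqref{eq_sn}. The key algebraic identity, following from the projection formula applied to the proper fibered map $f^n$ together with $\pi \circ f^n = \pi$, is that for every smooth compactly supported $(m-1,m-1)$-form $\Omega$ on $M$,
\[
\la (f^n)_* \Crit_f \wedge S_0,\ \pi^* \Omega\ra \ =\ d_t^n\, \la \Crit_f \wedge S_n,\ \pi^* \Omega\ra,
\]
and by Corollary \ref{cor_lemma_convergenza_large_top_degree} the right-hand side converges to $d_t^n \la T_{bif}, \Omega\ra$.

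Next I would decompose the K\"ahler volume form on $M\times V\subset \C^m\times\C^k$ into its two non-vanishing bidegree components $\omega^{m+k-1} = a_1\,\omega_M^{m-1}\wedge\omega_V^k + a_2\,\omega_M^m\wedge\omega_V^{k-1}$, where $\omega_M,\omega_V$ are the pull-backs of the standard K\"ahler forms and $a_1,a_2>0$ are binomial constants, and split accordingly $\|(f^n)_*\Crit_f\|_{\Uu_\Lambda} = a_1 I_1^{(n)} + a_2 I_2^{(n)}$. The second piece carries the full $\omega_M^m$ and fibers as $I_2^{(n)} = \int_\Lambda \|(f_\lam^n)_*\Crit_{f_\lam}\|_{U_\lam}\,\omega_M^m$, which is bounded by $c_2\delta^n$ by Lemma \ref{lemma_deg_dinamico_intorno} applied with $p=k-1$, uniformly in $\lam$ on a neighbourhood of $\overline\Lambda$ since $\delta > \sup_\lam d_{k-1}^*(f_\lam)$. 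For $I_1^{(n)}$ I would choose the reference measure $\theta$ in \eqref{eq_sn} of the form $\theta = \chi\,\omega_V^k$ with $\chi\in C_c^\infty(V)$ satisfying $c_0 \leq \chi \leq C_0$ on a compact neighbourhood of $\bigcup_{\lam\in\overline{\Lambda'}}U_\lam$, for some $\Lambda \Subset \Lambda'$. Then $\pi_V^*\theta$ is two-sided comparable to $\omega_V^k$ on $\Uu_{\Lambda'}$, and with $\phi\in C_c^\infty(\Lambda')$ equal to $1$ on $\Lambda$ one obtains
\[
I_1^{(n)} \leq c_0^{-1}\,d_t^n\,\la \Crit_f \wedge S_n,\ \pi^*(\phi\,\omega_M^{m-1})\ra\ \xrightarrow[n\to\infty]{}\ c_0^{-1}\,d_t^n\,\la T_{bif}, \phi\,\omega_M^{m-1}\ra \leq c_0^{-1}\|T_{bif}\|_{\Lambda'}\,d_t^n;
\]
the reverse lower bound is dual, using $\phi' \leq \mathbf{1}_\Lambda$ together with $\pi_V^*\theta \leq C_0\,\omega_V^k$ on the fibers above $\Supp(\phi')$.

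The main obstacle I anticipate is turning these asymptotic comparisons into inequalities valid for every $n\in\N$, with $\|T_{bif}\|_\Lambda$ on the right-hand side rather than $\|T_{bif}\|_{\Lambda'}$. The potentials $\phi_n(\lam) := \la S_n,\pi,\lam\ra(\log|\jac_{f_\lam}|)$ of Lemma \ref{lemma_intersezioni_tendono} converge to $L$ only in $L^1_{loc}$, so the positive currents $dd^c\phi_n = d_t^{-n}\pi_*((f^n)_*\Crit_f \wedge S_0)$ converge only weakly to $T_{bif}$; upper semicontinuity of mass on compacta however yields $\limsup_n \|dd^c\phi_n\|_{\overline{\Lambda'}} \leq \|T_{bif}\|_{\overline{\Lambda'}}$, and the uniform upper bound on the psh sequence $\phi_n$ provided by Theorem \ref{teo_tutto_slice_chapter1} gives a uniform-in-$n$ bound on these masses, so that shrinking $\Lambda'\searrow \overline\Lambda$ and absorbing finitely many small $n$ into the constants delivers the stated inequalities. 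The final assertion is then a direct contrapositive: if $\Lambda \cap \Supp dd^c L = \emptyset$, one can pick $\Lambda'\supset\overline\Lambda$ still disjoint from $\Supp dd^c L$, whence $\|T_{bif}\|_{\Lambda'} = 0$, the upper bound forces $\limsup_n \tfrac{1}{n}\log\|(f^n)_*\Crit_f\|_{\Uu_\Lambda}\leq \log\delta$ for every $\delta > d_{k-1}^*$, and taking $\delta\searrow d_{k-1}^*$ gives the desired conclusion.
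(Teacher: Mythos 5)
Your proposal follows essentially the same route as the paper's proof: the same splitting of the mass into the $\omega_M^{m-1}\wedge\omega_V^{k}$ and $\omega_M^{m}\wedge\omega_V^{k-1}$ components, with the second handled fiberwise via slicing and Lemma \ref{lemma_deg_dinamico_intorno}, and the first compared to $d_t^{n}\norm{dd^c L}$ by pulling back a weighted volume form $\theta$ comparable to $\omega_V^k$ near $\bigcup_\lam U_\lam$ and invoking Corollary \ref{cor_lemma_convergenza_large_top_degree}, then concluding the last claim by contraposition. The only (cosmetic) differences are that the paper uses two separate choices of the weight $\rho$ for the upper and lower bounds rather than a single two-sided one, and that your discussion of uniformity in $n$ is somewhat more explicit than the paper's.
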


Notice that
$\pa{f^n}_* \Crit_f$ actually denotes the current on $\Uu_\Lambda$ which is
 the pushforward by the proper map $f^n: f^{-n} (\Uu_\Lambda) \to \Uu_\Lambda$
 of the current $\Crit_f$ on $f^{-n} (\Uu_\Lambda)$.

\begin{proof}
The problem is local. We can thus assume that $\Vv= M \times V$, where $V$
is convex. Moreover, there
exists some open convex set $\tilde U$ such that $U_\lam \Subset \tilde U \Subset V$ for every $\lam\in M$.

Let us denote by $\omega_V$ and $\omega_M$
the standard K\"ahler forms on $\C^k$ and $\C^m$.
By abuse of notation, we denote by $\omega_V + \omega_M$
the K\"ahler form $\pi_V^* \omega_V + \pi_M^* \omega_M$
on the product space $M\times V$.
Since both $\omega_V^{k+1}$ and $\omega_M^{m+1}$ are zero,
by the definition of mass
we have
\[
\begin{aligned}
& \norm{\pa{f^n}_* \Crit_f}_{\Uu_\Lambda}
= \int_{ \Uu_\Lambda} \pa{f^n}_* \Crit_f \wedge (\omega_V + \omega_M )^{k+m-1}\\
&=
\binom{ k+m-1}{k}
\int_{ \Uu_\Lambda} \pa{f^n}_* \Crit_f \wedge \omega_V^k \wedge \omega_M^{m-1}
 +
\binom{ k+m-1}{k-1}
\int_{ \Uu_\Lambda} \pa{f^n}_* \Crit_f \wedge \omega_V^{k-1} \wedge \omega_M^{m}.
\end{aligned}
\]
We shall bound the two integrals by
means of
$\norm{dd^c L}_{\Lambda} d^n_t$ and $\delta^n$, respectively.
Let us
start
with
the first one. Let $\rho$ be a positive smooth function, compactly supported on $V$, equal to a constant $c_\rho$
on $\tilde U$
and
such
that the integral of $\rho$ is equal to 1. Notice in particular that
$\rho/c_\rho$
is equal to $1$ on $\t U$ and has total mass $1/c_\rho$.
Then
\[
\begin{aligned}
\int_{\Uu_\Lambda} \pa{f^n}_* \Crit_f \wedge \omega_V^k \wedge \omega_M^{m-1}
& \leq \frac{1}{c_\rho} \int_{\Vv_\Lambda} \pa{f^n}_* \Crit_f \wedge \pa{\pi_V^{*}\rho \cdot \omega_V^k} \wedge \omega_M^{m-1}.
\end{aligned}
\]

 Consider the smooth $(k,k)$-forms
$S_n:= \frac{\pa{f^n}^*}{d_t^n}\pa{\pi_V^{*}\rho \cdot \omega_V^k}$.
By Theorem \ref{teo_pham},
every subsequence
of $\pa{S_n}_n$ has a further subsequence $S_{n_i}$
converging
to an equilibrium current $\Ee_{(n_i)}$.
By Definition \ref{defi_bifurcation_current},
we have
$dd^c L = \pi_* (\Crit_f \wedge \Ee_{(n_i)})$.
Since
$f^* \omega_M = \omega_M$, we then have
\[
\begin{aligned}
d_t^{-n_i} \int_{\Vv_\Lambda} \pa{f^{n_i}}_* \Crit_f \wedge \pa{\pi_V^{*}\rho \cdot \omega_V^k} \wedge \omega_M^{m-1}
& = 
\int_{f^{-n_i} (\Vv_\Lambda)}  \Crit_f \wedge
S_{n_i}
\wedge \omega_M^{m-1}\\
& \to \int_{\Vv_\Lambda}  \Crit_f \wedge \Ee_{(n_i)} \wedge \omega_M^{m-1} = \norm{dd^c L}_\Lambda,
\end{aligned}
\]
where
the convergence follows from
Corollary \ref{cor_lemma_convergenza_large_top_degree}
(by means of a partition of unity on $\Lambda$).
Since the limit is independent from the subsequence, the convergence above happens without
the need of extraction (see also Lemma \ref{lemma_intersezioni_tendono}).
In particular we have
\[
 \int_{\Vv_\Lambda} \pa{f^n}_* \Crit_f \wedge \pa{\pi_V^{*}\rho \cdot \omega_V^k} \wedge \omega_M^{m-1} \leq \t c_1  \norm{dd^c L}_\Lambda d_t^n
\]
for some positive constant $\t c_1$ and the desired bound from above follows.
The bound from below is completely analogous, by means of a function $\rho$
equal to 1 on a neighbourhood of $\cup_\lam \{\lam\}\times K_\lam$.

Let us then estimate the second integral. We claim that
\begin{equation}\label{eq_int_due_claim}
\int_{\Lambda \times \t U} \pa{f^n}_* \Crit_f \wedge \omega_V^{k-1} \wedge \omega_M^{m}
=
\int_\Lambda
\norm{\pa{f_\lam^n}_* \Crit_{f_\lam}}_{\t U}
\omega_M^m.
\end{equation}
where $\Crit_{f_\lam}$
is the integration current (with multiplicity)
on the critical set of $f_\lam$.
The assertion then follows since, by
Lemma \ref{lemma_deg_dinamico_intorno},
the right hand side in \eqref{eq_int_due_claim}
is bounded by $\t c_2 \delta^n$, for some positive $\t c_2$.

Let us thus prove \eqref{eq_int_due_claim}.
By \cite[p. 124]{siu1974analyticity} and \cite[Theorem 4.3.2(7)]{federer1996geometric},
the slice $\lla
\pa{f^n}_* \Crit_f , \pi, \lam
\rra$ of $\pa{f^n}_* \Crit_f$ exists for almost every $\lam \in \Lambda$
and is given by
\[
\lla
\pa{f^n}_* \Crit_f , \pi, \lam
\rra
=
\pa{f_\lam^n}_* \lla \Crit_f, \pi, \lam \rra
=
\pa{f_\lam^n}_* { \Crit_{f_\lam}}.
\]
Since $\omega^{k-1}_V$
is smooth, this implies that the slice
of
$\pa{f^n}_* \Crit_f \wedge \omega^{k-1}_{V}$
exists for
almost
every $\lam$
and is equal to the measure
$\pa{\pa{f_\lam^n}_*  \Crit_{f_\lam}} \wedge \omega^{k-1}_{V}$.
The claim
then follows
from
\cite[Theorem 4.3.2]{federer1996geometric}
by integrating a partition of unity.
\end{proof}

Now we aim
to bound 
from below
a subsequence of
$\pa{\norm{\pa{f^n}_* \Crit_f}_{\Uu_\Lambda}}_n$
in presence of a Misiurewicz parameter.
The main tool to achieve this goal is given by the next proposition.

\begin{prop}\label{prop_mis2_costruzione_tubi}
 Let $f: \Uu \to \Vv = \D\times V$
 be a holomorphic family of polynomial-like maps of large topological degree $d_t$.
 Fix a ball $B \Subset V$ such that $B \cap J(f_0)\neq \emptyset$
 and let $\delta$ be such that
 $0 < \delta < d_t$.
 There exists a ball $A_0 \subset B$, a $N>0$ and a $\eta>0$
 such that
 $f^N$
 admits at least $\delta^N$ inverse branches defined on
 the cylinder $\D_\eta \times A_0$, with image contained in $\D_\eta \times A_0$.
\end{prop}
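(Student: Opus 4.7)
The plan is to construct many well-controlled inverse branches of $f_0^N$ on a small ball in the central fiber and then extend them holomorphically in the parameter $\lambda$ by the implicit function theorem. The key inputs are the equidistribution of preimages (Theorem \ref{teo_equidistribution}(2)) and the uniform positivity of the Lyapunov exponents, available since $f_0$ has large topological degree (see the remark after Theorem \ref{teo_equidistribution}).

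First, I would select a center $x_0$ for the ball $A_0$. Let $\Ee_0$ be the exceptional set from Theorem \ref{teo_equidistribution}(2). Since $\Ee_0$ is contained in a proper analytic subset and the orbit of the critical set is pluripolar, while $\mu_0$ charges the nonempty open set $B \cap J_0$, I can pick $x_0 \in B \cap J_0$ outside $\Ee_0$ and outside $\bigcup_{n \geq 0} f_0^n(C_{f_0})$. Then $d_t^{-n}(f_0^n)^* \delta_{x_0} \to \mu_0$.

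Next, I would apply the Briend--Duval construction for polynomial-like maps of large topological degree as developed by Dinh--Sibony. Since every Lyapunov exponent of $(f_0, \mu_0)$ is bounded below by $\chi := \tfrac{1}{2}\log(d_t/d_{k-1}^*) > 0$, for every $\varepsilon > 0$ and every sufficiently small ball $A_0 = B(x_0, r) \Subset B$, one obtains, for all $n$ large, a collection of $y_i^{(n)}$ among the $d_t^n$ preimages of $x_0$ such that the corresponding inverse branch $g_i$ of $f_0^n$ is well defined and univalent on a fixed neighbourhood of $A_0$, with image $V_i$ of diameter at most $C e^{-n\chi/2}$, and such that the cardinality of this collection is $(1-\varepsilon)d_t^n + o(d_t^n)$. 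Combining the equidistribution at $x_0$ with the contraction estimate $\diam V_i \to 0$, the number of indices $i$ for which both $y_i^{(n)} \in A_0$ and $V_i \Subset A_0$ is at least $d_t^n \mu_0(A_0)(1 - o(1))$. Since $\mu_0(A_0) > 0$ and $\delta < d_t$, I fix $N$ large enough that this count exceeds $\delta^N$.

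Finally, I would extend to the parameter space. Each $g_i \colon A_0 \to V_i$ avoids the critical set of $f_0^N$ by construction, so the implicit function theorem applied to $f^N \colon \Uu \to \Vv$ extends $g_i$ to a holomorphic inverse branch $G_i$ of $f^N$ on some polydisk $\D_{\eta_i} \times A_0$. Since there are only finitely many branches, $\eta := \min_i \eta_i > 0$. Shrinking $\eta$ further, the continuity in $\lambda$ together with $V_i \Subset A_0$ ensures $G_i(\D_\eta \times A_0) \subset \D_\eta \times A_0$ for every $i$, producing the required $\delta^N$ inverse branches. The main obstacle is the second step: one has to justify the simultaneous existence of many inverse branches with uniform geometric control in the polynomial-like setting, which rests on transferring the Pesin/Briend--Duval machinery to this context using the results of \cite{ds_allure,ds_cime} and the non-uniform hyperbolicity of $\mu_0$.
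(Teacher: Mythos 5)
Your proposal is correct and follows essentially the same route as the paper: a Briend--Duval-type construction of at least $\delta^N$ contracting univalent inverse branches of $f_0^N$ on a small ball (the paper packages this as Lemma \ref{lemma_mis2_costruzione_pallina_preimmagini}, you re-derive it from equidistribution of preimages of a generic point plus the positivity of the Lyapunov exponents), followed by an extension in the parameter $\lambda$. The only difference is that the paper justifies the extension step in detail via a properness/degree-one argument, using the Lipschitz bound on the branches to keep the critical set of $f^N$ away from the relevant cylinders, where you invoke the implicit function theorem together with a compactness argument -- the same underlying mechanism, stated more briefly.
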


 In the proof of the above proposition we shall first need to
 construct a ball $A \subset B$ with the required number of inverse branches for $f_0$.
 This is done by means
 of the following general lemma.
Fix any polynomial-like map $g:U\to V$
of large topological degree.
Given any $A\subset V$,
$n\in  \N$
and $\rho>0$, denote by
$C_n (A, \rho)$
the set
\begin{equation}\label{eq_def_Cn}
C_n (A, \rho) := \left\{
h
\left|
\begin{array}{l}
h \mbox{ is an inverse branch of } g^n 
\mbox{ defined on } \b A \\
 \mbox{ and such that }  h (\b A)\subset A \mbox{ and } 
 \Lip h_{|\b A} \leq \rho
\end{array}
 \right.
 \right\}.
\end{equation}
The following
result, which is just a local
version
of \cite[Proposition 3.8]{bbd2015},
is essentially due to Briend-Duval (see \cite{BriendDuval1}).

\begin{lemma}\label{lemma_mis2_costruzione_pallina_preimmagini}
 Let $g$
 be a polynomial-like map of large topological
 degree $d_t$. Let $B$ 
 be a ball intersecting
 $J$ and
 $\rho$ a positive number. There exists a
 ball $A$
 contained in $B$ and a $\alpha>0$
 such that $ \# C_n (A,\rho) \geq
 \alpha d^n_t$,
 for every $n$
 sufficently large.
\end{lemma}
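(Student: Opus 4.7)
My strategy is a local adaptation of Briend-Duval's construction of contracting inverse branches. The two key inputs are the uniform strict positivity of every Lyapunov exponent of $g$ (which gives $\chi_i \geq \tfrac{1}{2} \log(d_t/d_{k-1}^*) > 0$, as recalled after Theorem \ref{teo_equidistribution}) and the equidistribution $d_t^{-n}(g^n)^* \delta_a \to \mu$ for $a$ outside the orbit of the exceptional set $\Ee$.

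\textbf{Step 1 (base point and contracting inverse branches).} The sets of non-exceptional points, of Oseledets-Birkhoff generic points, and of Pesin-generic points of $g$ all have full $\mu$-measure; since $B$ meets $J = \Supp \mu$ we have $\mu(B) > 0$, so I may pick $x_0 \in B \cap J$ in their intersection. Because every Lyapunov exponent is strictly positive, the Pesin-type argument of Briend-Duval then supplies $r_0 > 0$, constants $C, c > 0$ and, for every $n$ large enough, a subset $\Gamma_n$ of the preimages of $x_0$ under $g^n$ with $\#\Gamma_n \geq (1-\eps_n)\, d_t^n$ (where $\eps_n \to 0$), such that for each $y \in \Gamma_n$ there is a holomorphic inverse branch $h_y$ of $g^n$ defined on $\overline{B(x_0, r_0)}$ with $h_y(x_0) = y$ and $\Lip(h_y) \leq C e^{-cn}$.

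\textbf{Step 2 (choice of $A$ and selection by position).} Set $A := B(x_0, r_1)$ for some $r_1 \leq r_0$ small enough that $A \Subset B$, and then take $n$ large so that $Ce^{-cn} \leq \min(\rho, 1/2)$. For every $y \in \Gamma_n$ that moreover lies in $B(x_0, r_1/2)$ we have
\[
h_y(\bar A) \subset B\bigl(y, \Lip(h_y)\cdot r_1\bigr) \subset B(y, r_1/2) \subset A
\]
and $\Lip(h_y) \leq \rho$, so $h_y \in C_n(A, \rho)$. By the equidistribution $d_t^{-n} (g^n)^* \delta_{x_0} \to \mu$ (which applies since $x_0$ is non-exceptional) combined with the positivity $\mu(B(x_0, r_1/2)) > 0$ (since $x_0 \in \Supp \mu$), the number of preimages of $x_0$ under $g^n$ lying in $B(x_0, r_1/2)$ is at least $\tfrac{1}{2} \mu(B(x_0, r_1/2))\, d_t^n$ for $n$ large, while the complement of $\Gamma_n$ inside those preimages has cardinality $\eps_n d_t^n = o(d_t^n)$. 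Combining these two estimates produces a set $C_n(A,\rho)$ of cardinality at least $\alpha d_t^n$, with $\alpha := \tfrac{1}{3}\mu(B(x_0, r_1/2)) > 0$.

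The main obstacle is Step 1: upgrading the $\mu$-almost-everywhere existence of contracting inverse branches (given by Oseledets together with the stable-manifold theorem for the natural extension of $g$) into a uniform statement on a ball of \emph{fixed} radius around $x_0$, with a count of good preimages of the form $(1-o(1))\, d_t^n$. This uniformization, via Lusin-type measurability of the Pesin charts together with equidistribution of preimages, is the quantitative core of Briend-Duval's argument, and it is precisely where the large-topological-degree assumption (and the resulting uniform strict positivity of every Lyapunov exponent) enters in an essential way.
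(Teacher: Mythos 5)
Your proposal follows exactly the route the paper itself takes: the paper gives no proof of this lemma beyond the citation to Briend--Duval and to Proposition 3.8 of the $\P^k$ paper, and your two steps (Pesin-theoretic inverse branches of $g^n$ with exponentially small Lipschitz constant based at a $\mu$-generic, non-exceptional point $x_0\in B\cap J$, made possible by the uniform lower bound $\chi_i\ge \tfrac12\log(d_t/d^*_{k-1})>0$, followed by selection of the branches whose base point returns to a concentric ball via the equidistribution $d_t^{-n}(g^n)^*\delta_{x_0}\to\mu$) are precisely that argument. The one point to watch is the order of quantifiers in Step 1: what the Pesin/Briend--Duval construction most directly delivers is, for each $\eps>0$, a radius $r_\eps$ on which a proportion at least $1-\eps$ of the $d_t^n$ branches are good, so in Step 2 one must make sure $\eps$ can be taken smaller than $\tfrac12\mu\bigl(B(x_0,r_\eps/2)\bigr)$ --- your stated version (a fixed $r_0$ with $\eps_n\to 0$) is exactly the strengthening that makes the return count close, and it is the content of the cited lemma rather than something your write-up establishes.
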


\begin{proof}[Proof of Proposition \ref{prop_mis2_costruzione_tubi}]
Let $A\subset B$
be a ball
given by an application of
Lemma \ref{lemma_mis2_costruzione_pallina_preimmagini} to the map $f_0$,
with $\rho = 1/4$.
There thus exists a $\alpha$ such that, for every sufficiently
large
$n$, the set $C_n (A, 1/4)$
defined as in \eqref{eq_def_Cn}
has at least $\alpha d_t^n$ elements.
Fix $N$ sufficiently large such that $\delta^N < \alpha d_t^N$.
Denote by
$h_i$
the elements of $C_N (A, 1/4)$
and by
$A_i$
the images
$A_i := h_i (A) \subset A$.
By definition of inverse branches, the $A_i$'s
are all disjoint and $f^N_0$
induces a biholomorphism
from every $A_i$ to $A$.

Take as $A_0$ any open ball relatively compact
in $A$ and such that $\cup_i \b A_i \Subset A_0$.
Such an $A_0$
exists
since $\cup_i \b A_i \Subset A$.
In particular, on $A_0$
the $h_i$'s are well defined, with images (compactly)
contained in the $A_i$'s.
To conclude, it suffices to find a $\eta$ such that
 these inverse branches for $f_0^N$
extend to inverse branches for
$f^N$ on $\D_\eta \times A_0$,
with images contained in $\D_\eta \times A_0$.

Define the sets $A_i^\eps$ by
\[
A_i^{\eps} := \set{z \in A_i \colon d(z, A_i^c)>\eps}.
\]
Since the $A_i$'s are finite and $f_0^N (A_i) = A$,
there exists
a $\eps_0$ such that, for every $i$,
$A_0 \Subset f_0^N (A_i^{\eps_0})$.
This implies, since $f$ is continuous and every $f_\lam$
is an open map,
that
$A_0 \Subset f_\lam^N (A_i^{\eps_0})$
for every $\lam$
sufficiently small.
Indeed, notice that $f_\lam^N (A_{i}^{\eps_0})$
is open and meets $A_0$ (for small $\lam$). We need then to show that
$\partial f_\lam^N (A_i^{\eps_0}) \cap A_0 = \emptyset$. Since $f_\lam^N$
is open, we have
$\partial f_\lam^N (A_i^{\eps_0}) \subset f_\lam^N (\partial A_i^{\eps_0})$, and the right term is close to
$f_0^N (\partial A_i^{\eps_0})$
for small $\lam$, by continuity.
But $f_0^N (\partial A_i^{\eps_0})$ is equal to $\partial f_0^N ( A_i^{\eps_0})$
since $f_0^N$
is a biholomorphism on $A_i$,
and thus
does not meet $A_0$.

We can thus consider, for $\D_\eta$
sufficiently small,
the open sets $\t A_i :=\pa{\pa{f^N}_{|\D_\eta \times A_i^{\eps_0}}}^{-1} (\D_\eta \times A_0) \Subset \D_\eta \times A_i^{\eps_0}$.
By the argument above,
for every $\lambda \in \D_\eta$
the function 
$f^N_\lam$ is a proper holomorphic map from
$\t A_{\lam,i} :=\pa{\pa{f^N_\lam}_{|A_i^{\eps_0}}}^{-1} (A_0)$ to $A_0$.
In order to conclude, we only need to check that, for $\lam$
in a neighbourhood of $0$,
the degree of
$f_\lam^N :
\t A_{\lam,i} \to A_0$
is equal to 1.
Since
$\t A_{\lam,i} \Subset A_i^{\eps_0}$, it
is enough to find $\eta$
such that the critical set of $f^N$ does not intersect $\D_\eta \times A_i^{\eps_0}$, for every $i$.
The existence of such $\eta$
follows from the Lipshitz estimate of the inverses $h_i$. Indeed, the fact that $\Lip h_i <1/4$
on $A$
implies that $\norm{\pa{df_0^N}^{-1}}^{-1} \geq 4 $ on $\cup_i A_i$.
It follows that $\norm{\pa{df_\lam^N}^{-1}}^{-1} \geq 3 $
on a neighbourhood of $\set{0}\times \cup_i A_i^{\eps_0}$
of the form $\D_\eta \times \cup_i A_i^{\eps_0}$.
In particular, the critical set of $f^N$
cannot intersect this neighbourhood, and the assertion follows.
\end{proof}

We can now prove
Theorem \ref{teo_misiurewicz_intro_pl}.

\begin{proof}[Proof of Theorem \ref{teo_misiurewicz_intro_pl}]
We shall prove that the existence of a Misiurewicz parameter implies that the
mass of $\pa{f^n}_* \Crit_f$
is asymptotically larger than $\tilde d^n$ (up to considering a subsequence),
for some $\tilde d > d^*_{k-1}$. The conclusion will then follow from Theorem \ref{teo_mis2_critical_volume_growth}.

Before starting proving the assertion, we make a few simplifications to the problem.
Let $\sigma (\lam)$ denote the repelling periodic point intersecting (but not being contained in)
  some component of $f^{n_0} (C)$ at $\lam=0$ and such that $\sigma(0) \in J_0$.
 \begin{itemize}
  \item We can suppose that $M = \D = \D_1$ and that $\lam_0 =0$.
  Doing this, we actually prove a stronger statement, i.e., that $dd^c L \neq 0$ on every complex
  disc passing through $\lambda_0$ such that $\sigma (\lam)$ is not contained in $f_\lam^{n_0} (C)$ for every $\lam$
  is the selected disc.
Moreover, we shall assume that $\Vv=\D \times V$.
   \item
  Without loss of generality, we can assume that $\sigma(\lam)$
  stays repelling for every $\lam\in \D$.
  Up to considering an iterate of $f$, we can suppose that $\sigma(\lambda)$
  is a repelling fixed point. Indeed,
  we can replace $n_0$ with $n_0 + r$, for some $0 \leq r < n(\sigma)$, where $n(\sigma)$
  is the period of $\sigma$, to ensure that now the new $n_0$ is a multiple of $n(\sigma)$.
  \item Using a local biholomorphism (change of coordinates), we can suppose that
  $\sigma(\lambda)$
  is a constant in $V$, and we can assume that this constant is $0$.
\item After possibly further rescaling, we can assume that
$f^{n_0} (C)$ intersects $\set{z=0}$
only at $\lam=0$.
  \item  We denote by
  $B$
  a small ball in $V$ centered at
  $0$.
  By taking this ball sufficiently small (and up to rescaling the parameter space), we can assume that
there exists some $b>1$
such that,
for every $\lambda \in \D=M$ and for every $z,z' \in B$,
we have
$\mbox{dist} \left(
 f_{\lambda} (z), f_{\lambda} (z') \right)
 \geq b \cdot \mbox{dist} (z,z')$.
 \end{itemize}
  
Fix 
a $\delta$ such that
$d_{k-1}^* < \delta < d_t$.
Proposition \ref{prop_mis2_costruzione_tubi}
gives the existence of
a ball $A_0\subset B$ and a $\eta$ such that the cylinder $T_0 :=\D_\eta \times A_0$
admits at least $\delta^N$
inverse branches $h_i$ for $f^N$, with images contained in $T_0$.
We explicitely notice that the images of $T_0$ under these inverse branches must be disjoint.
Up to rescaling we can still assume that $\eta=1$.

The
cylinder
$T_0$
is naturally foliated
by the ``horizontal''
holomorphic graphs $\Gamma_{\xi_z}$'s,
where $\xi_z (\lam)\equiv z$, for $z\in A_0$.
By construction, $T_0$
has at least  $\delta^{Nn}$ inverse branches for $f^{Nn}$, with images contained in $T_0$.
We denote these preimages by $T_{n,i}$, and we explicitely notice that every $T_{n,i}$
is biholomorphic to $T_0$, by the map $f^{Nn}$. In particular, $f^{Nn}$ induces a foliation
on every $T_{n,i}$, given by the preimages of the $\Gamma_{\xi_z}$'s by $f^{Nn}$.

The following elementary lemma (see \cite{tesi} for a complete proof)
shows that
there exists some $n'_0$ such that
some component $\tilde C$ of $f^{n_0 + n'_0} (C)$ intersects the graph of
every holomorphic map $\gamma: \D \to B$, and in particular every
element of the induced foliation on $T_{n,i}$.
This is a consequence of the expansivity
of $f$ on $\D \times B$
and the fact that $f^{n_0} (C) \cap\set{z=0} = (0,0)$.

\begin{lemma}\label{lemma_malefico}
 Denote by $\Gg$ the set of holomorphic maps $\gamma:\D\to B$.
 There exists an $n'_0$ such that (at least)
 one irreducible component $\t C$ of $f^{n_0 + n'_0}(C)$
 passing through $(0,0)$ 
 intersects the graph of every element of $\Gg$.
\end{lemma}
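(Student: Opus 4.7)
The plan is to use the uniform expansion of $f_\lambda$ at the repelling fixed point $\sigma \equiv 0$ to pull any graph $\gamma \in \Gg$ arbitrarily close to the zero graph, and then to apply Rouché's theorem in one variable to force an intersection with a well chosen irreducible component of $f^{n_0}(C)$ through $(0,0)$. Let $C_0$ denote a fixed irreducible component of $f^{n_0}(C)$ passing through $(0,0)$; by the assumption that $f^{n_0}(C) \cap \{z=0\} = \{(0,0)\}$ and that $\sigma$ is not contained in $f^{n_0}(C)$, the hypersurface $C_0$ meets $\D \times \{0\}$ only at the isolated point $(0,0)$. In particular, choosing a holomorphic equation $h(\lambda,z)=0$ for $C_0$ on a small neighbourhood of $(0,0)$, the restriction $h(\cdot,0)$ is a nontrivial holomorphic function of $\lambda$ whose only zero in some disc $\{|\lambda| \leq r\}$ is $\lambda = 0$, with some multiplicity $m \geq 1$.

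The expansion hypothesis yields a holomorphic inverse branch $g_\lambda := (f_\lambda|_B)^{-1}$ fixing $0$ and satisfying $|g_\lambda(z)| \leq b^{-1}|z|$ on $B$, so $g_\lambda(B)\subset B$ and iteration is well defined. Hence, for every $\gamma \in \Gg$, the iterate $\gamma_j(\lambda) := g_\lambda^j(\gamma(\lambda))$ is a holomorphic map $\D \to B$ with $\|\gamma_j\|_\infty \leq b^{-j} R$, where $R$ is the radius of $B$; the decay is uniform in $\gamma$. Choose now $n'_0$ so large that $b^{-n'_0} R$ is below the Rouché threshold $\eta$ associated to $h(\cdot,0)$ on the boundary circle $\{|\lambda| = r\}$. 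Then for every $\gamma \in \Gg$ the holomorphic function $\lambda \mapsto h(\lambda, \gamma_{n'_0}(\lambda))$ is within $\eta$ of $h(\lambda, 0)$ on $\{|\lambda|=r\}$ and therefore still has $m \geq 1$ zeros in $\{|\lambda|<r\}$; this produces $\lambda \in \D$ with $(\lambda, \gamma_{n'_0}(\lambda)) \in C_0$. Applying $f^{n'_0}$ and using $f_\lambda^{n'_0} \circ g_\lambda^{n'_0} = \mathrm{id}$ on $B$ gives $(\lambda, \gamma(\lambda)) \in f^{n'_0}(C_0) \subset f^{n_0 + n'_0}(C)$.

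To conclude, set $\t C := f^{n'_0}(C_0)$. This is irreducible as the image of an irreducible analytic set under a proper holomorphic map, has the same dimension $k$ as $C_0$ (since $f^{n'_0}$ is finite on fibres, the image retains full dimension), and contains $f^{n'_0}(0,0) = (0,0)$; being an irreducible analytic subset of $f^{n_0+n'_0}(C)$ of the same dimension as its ambient components, it is itself an irreducible component of $f^{n_0+n'_0}(C)$ through $(0,0)$. By the construction above it intersects $\Gamma_\gamma$ for every $\gamma \in \Gg$. The main point of care is to secure the uniformity of the Rouché threshold $\eta$ in $\gamma$, but since $h$, $r$, and hence $\eta$ depend only on the fixed local model of $C_0$ at $(0,0)$, this uniformity follows at once from the exponential contraction $\|\gamma_j\|_\infty \leq b^{-j} R$ and the continuity of $h$.
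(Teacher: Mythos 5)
Your proof is correct and implements exactly the argument the paper alludes to (the lemma being "a consequence of the expansivity of $f$ on $\D\times B$ and the fact that $f^{n_0}(C)\cap\{z=0\}=(0,0)$"): you pull every graph back by the contracting inverse branch at the persistent repelling fixed point and use Rouch\'e, with a threshold depending only on the local equation of the critical component, to force an intersection near $(0,0)$ before pushing forward. The only point deserving a little more care is the final bookkeeping: since $f^{n'_0}$ is only applied to $C_0$ intersected with its domain of definition, the irreducibility of the image is not completely automatic, and to be sure that a \emph{single} component $\t C$ catches the intersection point for \emph{every} $\gamma$ it is cleanest to run the Rouch\'e argument with a local defining function of one irreducible branch of $C_0$ at $(0,0)$ (whose image germ under $f^{n'_0}$ lies in a unique component of $f^{n_0+n'_0}(C)$ through $(0,0)$); this is a one-line adjustment.
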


Let $n'_0$ and $\t C$ be given by
Lemma \ref{lemma_malefico}.
In particular, $\t C$
intersects every element of the induced foliations on the $T_{n,i}$'s.
Let $B_{n,i}$ denote the intersection $T_{n,i} \cap \t C$
and
set $D_{n,i} := f^{Nn}
(B_{n,i})
\subset T_0$.
The $D_{n,i}$'s
are non-empty
analytic subsets
of $T_0$
(since $f^{Nn}: T_{n,i}\to T_0$ is a biholomorphism).
Moreover, the graphs of the $\xi_z$'s intersect every $D_{n,i}$, since
their preimages in $T_{n,i}$
intersect every $B_{n,i}$.
In particular,
the projection of every $D_{n,i}$
on $V$
is equal to $A_0$.

Let us finally estimate the mass of $\pa{f^{n_0+n'_0+Nn}}_* [C]$ on $\Uu_{\D}$.
First of all,
notice that $   \pa{f^{n_0+n'_0}}_* \Crit_f \geq f_*^{n_0 + n'_0} [C] \geq [f^{n_0 + n'_0} (C)] \geq [\tilde C]$
as positive currents on $\Uu_\D$.
This implies that
\[
 \norm{f^{Nn + n_0+n'_0}_* \Crit_f}_{\Uu_\D}
  \geq \norm{f_*^{Nn} \bra{f^{n_0+n_0'} (C)}}_{\Uu_\D}
 \geq \norm{f_*^{Nn} \bra{\t C}}_{\Uu_\D} \geq   \norm{f_*^{Nn} \bra{\t C}}_{T_0}.
\]
Now, since $f^{Nn}$ gives a biholomorphism from every $T_{n,i}$ to $T_0$ and all the $T_{n,i}$'s are disjoint, we have
\[
 \norm{f_*^{Nn} \bra{\t C}}_{T_0} \geq \norm{f_*^{Nn}\pa{ \sum_i \bra{B_{n,i}}}}_{T_0}
 = \sum_i \norm{f^{Nn}_* \bra{B_{n,i}} }= \sum_i \norm{[D_{n,i}]}.
\]
By Wirtinger formula, for every $n$ and $i$ the volume
of $D_{n,i}$
is larger than the volume
of its projection $A_0$ on $V$.
Since by construction the last sum has at least $\delta^{Nn}$ terms, we have
\[
\norm{f^{n_0+n'_0+Nn}_*  \Crit_f}_{\Uu_\D} \geq \mbox{volume} (A_0) \cdot  \delta^{Nn} > \mbox{volume} (A_0) \cdot \pa{d_{k-1}^{*}}^{Nn}
\]
and the assertion follows from Theorem \ref{teo_mis2_critical_volume_growth}.
\end{proof}

 \section{Dynamical stability of polynomial-like maps}

\subsection{Equilibrium webs}

We introduce here
a notion of \emph{holomorphic motion}
for the equilibrium measures of a family of polynomial-like maps.
Consider
the space
of holomorphic
maps
\[
\Oo \pa{M,\C^k,\Vv} := \set{ \gamma \in \Oo \pa{M,\C^k} \colon  \forall \lam \in M , \gamma(\lam) \in V_\lam }
\]
endowed 
with the topology of local uniform convergence
and its subset
\[
\Oo \pa{M,\C^k,\Uu} := \set{ \gamma \in \Oo \pa{M,\C^k} \colon  \forall \lam \in M , \gamma(\lam) \in U_\lam }.
\]
By Montel Theorem, $\omcu$
is relatively compact in $\omcv$.
The map $f$ induces an action $\Ff$ from $\omcu$ to $\omcv$ given by
$\pa{\Ff\cdot \gamma} (\lam) = f_\lam \pa{\gamma(\lam)}$.
We now
restrict ouselves to
the subset $\Jj$ of $\omcu$ given by
\[
\Jj :=\{ \gamma \in \omcu \colon \gamma(\lambda)  \in J_\lam \mbox{ for every } \lam \in M\}.
\]
This is an $\Ff$-invariant
compact metric space with respect to the topology of local uniform convergence.
Thus, $\Ff$ induces a well-defined dynamical system on it.

Nothing prevents the set $\Jj$ to be actually empty, but we have the
following lemma (see \cite{tesi})
which is a consequence of the lower semicontinuity of the Julia set (see \cite{ds_cime}).

\begin{lemma}\label{lemma_open_j}
 Let $f: \Uu \to \Vv$ be a holomorphic family of polynomial-like maps of large
 topological degree
 and 
 $\rho \in \omcu$ such that $\rho(\lam)$ is $n$-periodic
 for every $\lam\in M$. Then, the set
 \[
 J_\rho := \set{\lam \in M \colon \rho(\lam) \mbox{ is $n$-periodic, repelling and belongs to } J_\lam}
 \]
 is open.
\end{lemma}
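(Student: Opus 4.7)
The plan is to fix an arbitrary parameter $\lam_0 \in J_\rho$ and exhibit a neighbourhood of $\lam_0$ in $M$ on which both defining conditions of $J_\rho$ (namely, $\rho(\lam)$ being repelling and $\rho(\lam) \in J_\lam$) persist; the $n$-periodicity is automatic by hypothesis. The openness of the repelling condition is immediate: the map $\lam \mapsto df_\lam^n(\rho(\lam))$ is holomorphic in $\lam$, so its eigenvalues depend continuously on $\lam$, and the condition that all of them have modulus strictly greater than $1$ is open.

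Using that $\rho(\lam_0)$ is repelling, fix a small ball $B$ around $\rho(\lam_0)$ on which an inverse branch $g_{\lam_0}$ of $f_{\lam_0}^n$ fixing $\rho(\lam_0)$ is defined with $g_{\lam_0}(\b B) \Subset B$ and a definite contraction factor. By the implicit function theorem together with continuity of $f^n$ and its differential, for $\lam$ in a sufficiently small neighbourhood $W$ of $\lam_0$ an inverse branch $g_\lam$ of $f_\lam^n$ fixing $\rho(\lam)$ is defined on $B$ and still satisfies $g_\lam(\b B) \Subset B$ with uniform contraction factor; in particular $g_\lam^k(B)$ shrinks to $\{\rho(\lam)\}$ as $k \to \infty$.

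To conclude, it suffices to show that $\rho(\lam) \in J_\lam$ for $\lam \in W$ (up to shrinking $W$). Here we invoke the lower semicontinuity of $\lam \mapsto J_\lam$ for families of polynomial-like maps of large topological degree, recalled in Section \ref{section_hol_families}. Since $\rho(\lam_0) \in J_{\lam_0} \cap B$ and $B$ is open, lower semicontinuity yields, after possibly shrinking $W$, a point $z_\lam \in J_\lam \cap B$ for every $\lam \in W$. Since $f_\lam$ is a polynomial-like map, the Julia set $J_\lam$ is totally invariant; in particular $g_\lam^k(z_\lam) \in J_\lam$ for every $k \geq 0$. Because $g_\lam^k(z_\lam) \to \rho(\lam)$ and $J_\lam$ is closed, we conclude $\rho(\lam) \in J_\lam$.

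The only non-trivial ingredient is the lower semicontinuity of the Julia set, which is where the large topological degree hypothesis is used; the rest is a standard inverse-branch argument combining the hyperbolicity of $\rho$ with the total invariance of $J_\lam$. The main conceptual obstacle is precisely this asymmetry: the Julia set is only lower semicontinuous (a point in $J_{\lam_0}$ may a priori escape $J_\lam$ under perturbation), so one cannot simply say ``$\rho(\lam_0) \in J_{\lam_0}$ implies $\rho(\lam) \in J_\lam$ nearby'' --- one really needs the repelling dynamics to \emph{drag} approximating Julia points onto $\rho(\lam)$.
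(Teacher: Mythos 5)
Your proof is correct and follows essentially the same route the paper indicates: it only sketches this lemma as ``a consequence of the lower semicontinuity of the Julia set,'' and the inverse-branch argument you use to drag a nearby Julia point onto $\rho(\lam)$ via total invariance of $J_\lam$ is exactly the one carried out in the paper's proof of Lemma \ref{lemma_motion_hyp}. (Only cosmetic caveat: to get a genuine contraction factor for $g_{\lam_0}$ one should work in an adapted metric or pass to an iterate, since repelling only gives spectral radius of $(df^n_{\lam_0})^{-1}$ less than $1$.)
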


Notice
that a repelling cycle $\rho(\lam)$
can leave the Julia set (i.e., the set $J_\rho$ is not necessarily closed).
An example of this phenomenon is given in
\cite{tesi}.

We denote by
$p_\lam: \Jj\to\P^k$ the evaluation map $\gamma\mapsto \gamma(\lam)$.
The map $\Ff\colon \omcu \to \omcv$ is proper.
This follows from Montel Theorem since, for any $\lam$, $p_{\lam}$ is continuous and $f_{\lam}:U_{\lam}\to V_\lam$
is proper.
This means in particular that $\Ff$
induces a well defined notion of pushforward
from the measures on
$\omcu$
to those on $\omcv$.

\begin{defi}\index{Equilibrium web}
 Let $f: \Uu\to\Vv$ be a holomorphic family of polynomial-like maps.
 An \emph{equilibrium web} is a probability measure
 $\Mm$ on $\Jj$ such that:
 \begin{enumerate}
  \item $\Ff_* \MM = \MM$, and
  \item $\pa{p_\lam}_* \Mm =\mu_\lam$ for every $\lam \in M$, . 
 \end{enumerate}
The equilibrium measures $\mu_\lam$ \emph{move holomorphically} (over $M$) if $f$
admits an equilibrium web.
\end{defi}

Given an equilibrium web $\Mm$, we can
see the triple $\pa{\Jj, \Ff, \Mm}$ as an invariant dynamical system. Moreover,
we
can associate to
$\Mm$
the $(k,k)$-current on $\Uu$ given by
$W_\Mm := \int [\Gamma_\gamma] d\Mm (\gamma)$,
where $[\Gamma_\gamma]$
denotes the integration current
on the graph $\Gamma_\gamma \subset \Uu$ of the map
$\gamma \in \Jj$.
It is not difficult to check that $\Ww_\Mm$
is an equilibrium current for the family.

The next lemma gives some elementary properties
of the support of any equilibrium web. The proof is the same as in the case
of endomorphisms of $\P^k$ (see \cite[Lemma 2.5]{bbd2015}) and is thus omitted.

\begin{lemma}\label{lemma_property_web}
 Let $f\colon \Uu\to \Vv$
 be a holomorphic family of polynomial-like maps of degree $d_t \geq 2$. Assume that $f$
 admits an equilibrium web $\Mm$. Then
 \begin{enumerate}
  \item\label{lemma_property_web_item_1} for every $(\lam_0, z_0) \in M\times J_{\lam_0}$
  there exists an element $\gamma\in \Supp \Mm$
  such that $z_0 = \gamma (\lam)$, and
  \item\label{lemma_property_web_item_2} for every $(\lam_0, z_0) \in M\times J_{\lam_0}$ such that $z_0$
  is $n$-periodic and repelling for $f_{\lam_0}$ there exists a unique $\gamma \in \Supp \Mm$
  such that $z_0 = \gamma (\lam_0)$ and $\gamma(\lam)$ is $n$-periodic for $f_\lam$ for
  every $\lam\in M$. Moreover,
  $\gamma (\lam_0) \neq \gamma'(\lam_0)$
 for every $\gamma'\in \Supp \Mm$ different from $\gamma$.
 \end{enumerate}
\end{lemma}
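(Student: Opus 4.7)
The plan is to treat the two statements in sequence. For \ref{lemma_property_web_item_1}, my strategy is to extract a candidate directly from the marginal condition $\pa{p_{\lam_0}}_* \Mm = \mu_{\lam_0}$. Since $z_0 \in J_{\lam_0} = \Supp \mu_{\lam_0}$, any open neighbourhood $U$ of $z_0$ satisfies $\mu_{\lam_0}(U) > 0$, hence $\Mm\pa{p_{\lam_0}^{-1}(U)} > 0$, which forces $p_{\lam_0}^{-1}(U) \cap \Supp \Mm \neq \emptyset$. Picking a countable nested basis of neighbourhoods of $z_0$ and using that $\Supp \Mm \subset \Jj$ is compact (by Montel's theorem applied to $\omcu \Subset \omcv$), an accumulation argument yields $\gamma \in \Supp \Mm$ with $\gamma(\lam_0) = z_0$.

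For \ref{lemma_property_web_item_2}, I would couple \ref{lemma_property_web_item_1} with a hyperbolicity argument. Applying the implicit function theorem to $(\lam,z) \mapsto f^n_\lam(z) - z$ at $(\lam_0, z_0)$ — the linearization is invertible because $z_0$ is repelling — produces on some neighbourhood $N \Subset M$ of $\lam_0$ a unique holomorphic map $\eta \colon N \to \C^k$ with $\eta(\lam_0) = z_0$ and $f^n_\lam\pa{\eta(\lam)} = \eta(\lam)$, and $\eta(\lam)$ stays repelling throughout $N$. Given any $\gamma \in \Supp \Mm$ with $\gamma(\lam_0) = z_0$ obtained from \ref{lemma_property_web_item_1}, the whole statement reduces to proving $\gamma \equiv \eta$ on $N$: the identity $f^n_\lam \circ \gamma = \gamma$ then extends from $N$ to $M$ by analytic continuation (recall that $M$ is connected), making $\gamma(\lam)$ $n$-periodic for every $\lam$; and applied to any competitor $\gamma' \in \Supp \Mm$ with $\gamma'(\lam_0) = z_0$, the same reduction forces $\gamma' \equiv \gamma$ on $N$ and hence on $M$, yielding existence together with both uniqueness assertions in one stroke.

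To establish $\gamma \equiv \eta$ on $N$, I would iterate $\Ff^n$ and exploit repellingness. First observe that $\Ff_* \Mm = \Mm$ together with the continuity of $\Ff$ force $\Ff(\Supp \Mm) \subset \Supp \Mm$, so every $\gamma_k := \Ff^{kn} \gamma$ lies in $\Supp \Mm$, is holomorphic on $M$, takes the value $z_0$ at $\lam_0$ (since $z_0$ is $n$-periodic), and satisfies $\gamma_k(\lam) \in U_\lam$. Setting $\psi_k := \gamma_k - \eta$, these are holomorphic on $N$, vanish at $\lam_0$, and are uniformly bounded on $N \Subset M$ (because $\bigcup_{\lam \in N} V_\lam$ is relatively compact). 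The recursion $\psi_{k+1}(\lam) = f^n_\lam\pa{\eta(\lam) + \psi_k(\lam)} - \eta(\lam)$, the fixed-point identity for $\eta$, and $\psi_k(\lam_0) = 0$ combine after differentiation at $\lam_0$ into the clean relation $\psi_{k+1}'(\lam_0) = Df^n_{\lam_0}(z_0)\cdot \psi_k'(\lam_0)$, so $\psi_k'(\lam_0) = \bra{Df^n_{\lam_0}(z_0)}^k \psi_0'(\lam_0)$. Since all eigenvalues of $Df^n_{\lam_0}(z_0)$ have modulus strictly greater than $1$, $\psi_0'(\lam_0) \neq 0$ would force $\norm{\psi_k'(\lam_0)} \to \infty$, contradicting Cauchy estimates on the uniformly bounded family $(\psi_k)$. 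The same computation inducted on successive Taylor jets of $\psi_0$ at $\lam_0$ forces every coefficient to vanish, and holomorphicity then yields $\psi_0 \equiv 0$ on $N$.

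The step I expect to require the most care is precisely this jet-by-jet expansion argument: a naive norm estimate on $\psi_k$ throughout $N$ would require the iterates $\gamma_k$ to remain inside a linearization neighbourhood of $\eta$, which cannot be guaranteed globally. The Taylor-coefficient formulation sidesteps the issue because Cauchy's inequality on the uniformly bounded family $(\psi_k)$ bounds every fixed-order coefficient, so the geometric growth forced by repellingness at a single order already produces the contradiction, and each order can be treated in turn once the lower-order jet has been shown to vanish.
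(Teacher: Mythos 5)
Your proof is correct. The paper itself omits the argument, deferring to the corresponding lemma for endomorphisms of $\P^k$ in \cite{bbd2015}, and your proof follows the same skeleton: the first part from the marginal condition $\pa{p_{\lam_0}}_*\Mm=\mu_{\lam_0}$ together with compactness of $\Supp\Mm\subset\Jj$, and the second part from the implicit function theorem (producing the local continuation $\eta$ of the repelling point), the forward invariance $\Ff(\Supp\Mm)\subset\Supp\Mm$ making the iterates $\Ff^{kn}\gamma$ a uniformly bounded family through $(\lam_0,z_0)$, and the expansion of $Df^n_{\lam_0}(z_0)$ played against that boundedness; the final analytic continuation of $f^n_\lam\circ\gamma=\gamma$ to all of connected $M$, and the derivation of the ``moreover'' clause by applying the same rigidity to an arbitrary competitor $\gamma'$, are both handled correctly. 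Where you genuinely deviate is the implementation of the rigidity step: the standard argument runs the contracting inverse branch of $f^n_\lam$ at $\eta(\lam)$ backwards along a normally convergent subsequence of the $\Ff^{kn}\gamma$, which requires checking that the relevant iterates stay in the linearization domain over a fixed neighbourhood of $\lam_0$, whereas your Taylor-jet recursion $[\psi_{k+1}]_m=Df^n_{\lam_0}(z_0)\circ[\psi_k]_m$ combined with Cauchy estimates is purely local at $\lam_0$ and sidesteps that control issue entirely (the point you rightly flag as delicate), at the mild cost of an order-by-order induction. Both routes are valid; yours is arguably the more robust of the two.
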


The following theorem allows us to construct equilibrium webs starting from particular elements in $\omcu$.
The proof is analogous to the one on $\P^k$ (see \cite{bbd2015}) and is
based on Theorem \ref{teo_equidistribution}. We refer to \cite{tesi}
for the details in this setting. We just notice that the assumption on
the parameter space to be simply connected in the second point is needed to ensure the 
existence of the preimages.

\begin{teo}
\label{teo_construct_web}
 Let $f :\Uu \to \Vv$ be a holomorphic family of polynomial-like maps of large topological degree $d_t \geq 2$.
 \begin{enumerate}
 \item Assume that the repelling $J$-cycles of $f$
 asymptotically move holomorphically over the parameter space $M$
 and let $\pa{\rho_{n,j}}_{1 \leq j \leq N_n}$ be the elements
 of $\Jj$
 given by the motions of these cycles. Then, the equilibrium
 measures move holomorphically and any limit of
 $\pa{ \frac{1}{d^{n}_t} \sum_{j = 1}^{N_n} \delta_{\rho_{n,j}}}_n$
 is a equilibrium web.
\item Assume that the parameter space $M$
 is simply connected and that there exists $\gamma \in \omcu$
 such that the graph $\Gamma_\gamma$ does not intersect the post-critical set of $f$. Then, the equilibrium
 measures move holomorphically and any limit of
 $\pa{\frac{1}{n}\sum_{l=1}^n \frac{1}{d^{l}_t} \sum_{\Ff^l \sigma = \gamma} \delta_\sigma}_n$ is an equilibrium web.
 \end{enumerate}
 \end{teo}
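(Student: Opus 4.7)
The plan for both parts is the same: exhibit an explicit sequence $\pa{\Mm_n}_n$ of probability measures on $\Jj$ whose fibrewise push-forwards $\pa{p_\lam}_* \Mm_n$ converge to $\mu_\lam$ and which are asymptotically $\Ff$-invariant in the sense that $\Ff_*\Mm_n - \Mm_n \to 0$. Since $\Jj$ is a compact metric space and both $\Ff$ and every $p_\lam$ are continuous on it, any weak-$*$ limit $\Mm$ of such a sequence then automatically satisfies $\pa{p_\lam}_* \Mm = \mu_\lam$ and $\Ff_* \Mm = \Mm$, i.e., is an equilibrium web.

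For part (1), set $\Mm_n := d_t^{-n} \sum_{j = 1}^{N_n} \delta_{\rho_{n,j}}$, which has total mass $1 + o(1)$ by the cardinality assumption $N_n = d_t^n + o(d_t^n)$ of Definition \ref{defi_rep_asymptotically_move_hol}. Its evaluation $\pa{p_\lam}_* \Mm_n = d_t^{-n} \sum_j \delta_{\rho_{n,j}(\lam)}$ is an asymptotic average of repelling $n$-periodic points of $f_\lam$ lying in $J_\lam$, and hence converges to $\mu_\lam$ by Theorem \ref{teo_equidistribution}(1). For the invariance, I would observe that $\Ff$ permutes the full collection of $n$-periodic sections in $\Jj$ (being injective on this finite set, hence bijective), and $\pa{\rho_{n,j}}_j$ constitutes all but an $o(d_t^n)$-fraction of this collection; enlarging $\Pp_n$ accordingly one may assume $\Ff_*\Mm_n = \Mm_n$ exactly.

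For part (2), the absence of post-critical points along $\Gamma_\gamma$ combined with simple-connectedness of $M$ allows us, for each $l \geq 1$, to lift the $d_t^l$ distinct preimages of $\gamma(\lam)$ under $f_\lam^l$ to global holomorphic sections $\sigma \in \omcu$ satisfying $\Ff^l \sigma = \gamma$. Set
\[
\Mm_n := \frac{1}{n} \sum_{l = 1}^n d_t^{-l} \sum_{\Ff^l \sigma = \gamma} \delta_\sigma.
\]
Then $\pa{p_\lam}_* \Mm_n = \frac{1}{n} \sum_{l=1}^n d_t^{-l} \pa{f_\lam^l}^* \delta_{\gamma(\lam)}$, and since the exceptional set $\Ee$ of Theorem \ref{teo_equidistribution}(2) lies in the post-critical set, $\gamma(\lam)$ is not in the orbit of $\Ee_{f_\lam}$; each summand thus converges to $\mu_\lam$ and so does the Cesàro average. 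For asymptotic $\Ff$-invariance, a reindexing of $\Ff_*\Mm_n$ by $\tau = \Ff\sigma$, using that $\Ff$ is $d_t$-to-$1$ off the critical locus, gives the telescoping identity
\[
\Ff_* \Mm_n - \Mm_n = \frac{1}{n} \pa{\delta_\gamma - d_t^{-n} \sum_{\Ff^n\sigma = \gamma} \delta_\sigma},
\]
whose total variation is bounded by $2/n$ and therefore vanishes in the limit.

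The main technical point is the global lifting step in part (2): each local holomorphic branch of $f_\lam^{-l}$ at $\gamma(\lam)$ must extend to an element of $\omcu$ over the whole of $M$, which requires both simple-connectedness (to kill monodromy) and the avoidance of the post-critical set (so that $f^l$ is genuinely unramified along the preimages, making the branches single-valued). A secondary subtlety, common to both parts, is to check that any weak-$*$ limit $\Mm$ is supported on $\Jj$ rather than merely on $\omcu$; this follows once $\pa{p_\lam}_*\Mm = \mu_\lam$ has been established, using the lower semicontinuity of $J_\lam$ available in the large topological degree setting.
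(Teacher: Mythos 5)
Your proposal is correct and follows essentially the same route as the paper, which for this theorem defers to \cite{bbd2015,tesi} but indicates exactly your strategy: form the explicit, asymptotically $\Ff$-invariant measures on the compact space of graphs, obtain the fibrewise convergence $\pa{p_\lam}_*\Mm_n\to\mu_\lam$ from Theorem \ref{teo_equidistribution} (using that the exceptional set and its orbit lie in the postcritical set), use simple-connectedness plus avoidance of the postcritical set to lift the preimages of $\gamma$ to global sections, and pass to weak-$*$ limits. One small correction: the support of a limit $\Mm$ being contained in $\Jj$ requires no semicontinuity of $\lam\mapsto J_\lam$ — it follows directly from $\pa{p_\lam}_*\Mm=\mu_\lam$, since each $p_\lam^{-1}(J_\lam)$ is closed of full $\Mm$-measure and hence $\Supp\Mm\subset\bigcap_{\lam}p_\lam^{-1}(J_\lam)=\Jj$.
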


\subsection{A preliminary characterization of stability}

The following Theorem
shows the equivalence
of the conditions
\ref{item_teo_intro_lyap} and \ref{item_teo_intro_mis}
in Theorem \ref{teo_equiv_intro}.

\begin{teo}
\label{teo_equiv}
 Let $f: \Uu\to \Vv$ be a holomorphic family of polynomial-like maps  of large topological degree $d_t \geq 2$.
 Then the following are equivalent:
 \begin{MA}
 \item\label{item_support}
 for every $\lam_0 \in M$ there exists a neighbourhood $M_{0} \Subset M$
 where
 the measures $\mu_\lam$
move holomorphically
and $f$ admits a equilibrium web $\Mm = \lim_n \Mm_n$ such that the graph
  of any $\gamma \in \cup_n \Supp \Mm_n$ avoids the critical set of $f$;
  \item\label{item_ddc} the function $L$ is pluriharmonic on $M$;
  \item\label{item_misiurewicz} there are no Misiurewicz parameters in $M$;
  \item\label{item_graph} for every $\lam_0 \in M$ there exists a neighbourhood $M_0 \Subset M$ and a holomorphic map
  $\gamma\in \Oo \pa{M_0, \C^k, \Uu}$ such that
 the graph of $\gamma$
 does not intersect the postcritical set of $f$.
 \end{MA}
\end{teo}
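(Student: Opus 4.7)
The plan is to establish the cycle $\ref{item_support} \Rightarrow \ref{item_ddc} \Rightarrow \ref{item_misiurewicz} \Rightarrow \ref{item_graph} \Rightarrow \ref{item_support}$. Two of these are short. First, $\ref{item_ddc} \Rightarrow \ref{item_misiurewicz}$ is immediate from Theorem \ref{teo_misiurewicz_intro_pl}: a Misiurewicz parameter would lie in $\Supp dd^c L = \emptyset$. Second, for $\ref{item_graph} \Rightarrow \ref{item_support}$, on a simply connected neighborhood $M_0 \Subset M$ of $\lambda_0$ I would apply Theorem \ref{teo_construct_web}(2) to the $\gamma$ given by $\ref{item_graph}$: the measures $\Mm_n = \frac{1}{n}\sum_{l=1}^n \frac{1}{d_t^l}\sum_{\Ff^l \sigma = \gamma} \delta_\sigma$ are supported on holomorphic iterated preimages of $\gamma$, and since $\Gamma_\gamma$ avoids $\bigcup_{l \geq 0} f^l(C_f)$, each such $\sigma$ automatically satisfies $\Gamma_\sigma \cap C_f = \emptyset$.

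\medskip

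For $\ref{item_support} \Rightarrow \ref{item_ddc}$, I would work on the neighborhood $M_0$ carrying the web $\Mm = \lim \Mm_n$ and introduce the horizontal positive closed currents $\Ww_{\Mm_n} := \int [\Gamma_\gamma]\, d\Mm_n(\gamma)$, which converge to the equilibrium current $\Ww_{\Mm}$. The hypothesis that every $\gamma \in \Supp \Mm_n$ has $\Gamma_\gamma \cap C_f = \emptyset$ implies $\Crit_f \wedge \Ww_{\Mm_n} = 0$; equivalently, the function
\[
L_n(\lambda) := \int \log |\jac(\lambda, \gamma(\lambda))|\, d\Mm_n(\gamma)
\]
is pluriharmonic on $M_0$. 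Since $(p_\lam)_* \Mm = \mu_\lam$, the Lyapunov function satisfies $L(\lambda) = \int \log |\jac(\lambda, \gamma(\lambda))|\, d\Mm(\gamma)$. An adaptation of Lemma \ref{lemma_intersezioni_tendono} to the currents $\Ww_{\Mm_n}$ (using large topological degree, which guarantees $\lla \mu_\lam, \log |\jac|\rra > -\infty$) yields $L_n \to L$ in $L^1_{loc}(M_0)$, so $L$ is an $L^1_{loc}$-limit of pluriharmonic functions and is itself pluriharmonic on $M_0$. As $\lambda_0$ is arbitrary, $L$ is pluriharmonic on $M$.

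\medskip

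For $\ref{item_misiurewicz} \Rightarrow \ref{item_graph}$, the strategy is to produce, at any fixed $\lambda_0$, a holomorphic motion $\sigma$ of a repelling $J$-cycle whose graph avoids the entire postcritical set. By Theorem \ref{teo_equidistribution}(1), $J_{\lambda_0}$ contains $\sim d_t^n$ repelling $n$-periodic points; each such point extends, via the implicit function theorem, to a holomorphic motion $\sigma: N \to \C^k$ on a neighborhood $N$ of $\lambda_0$ with $\sigma(\lambda)$ still repelling, and by Lemma \ref{lemma_open_j} the condition $\sigma(\lambda) \in J_\lam$ holds on an open subset of $N$ containing $\lambda_0$. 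For each $n_0 \geq 0$, the analytic hypersurface $f^{n_0}(C_f)$ intersects $\Gamma_\sigma$ either in all of $\Gamma_\sigma$ (``degenerate'' case) or in a proper analytic subset. In the non-degenerate case, any point of $\Gamma_\sigma \cap f^{n_0}(C_f)$ sufficiently close to $\lambda_0$ satisfies the four conditions of Definition \ref{defi_misiurewicz} and is a Misiurewicz parameter, contradicting $\ref{item_misiurewicz}$. The degenerate case must be ruled out by a careful choice of $z_0 := \sigma(\lambda_0)$: for each $n_0$, the condition $\Gamma_\sigma \subset f^{n_0}(C_f)$ constrains $z_0$ to a specific analytic subset of $J_{\lambda_0}$, and a counting argument (countably many $n_0$ against $\sim d_t^n$ repelling $n$-periodic candidates) guarantees a good $z_0$ exists. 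Shrinking $N$ if needed then gives $\ref{item_graph}$.

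\medskip

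The main obstacle is this final implication: ensuring the existence of a repelling $n$-periodic $z_0 \in J_{\lambda_0}$ whose holomorphic motion $\sigma$ is not globally trapped in any iterated postcritical component $f^{n_0}(C_f)$. Handling this requires combining the quantitative density of repelling periodic points in $J_{\lambda_0}$ from Theorem \ref{teo_equidistribution}(1) with the analyticity of the obstruction loci, and is the step where the arguments from the $\P^k$ setting must be adapted with most care to the polynomial-like framework.
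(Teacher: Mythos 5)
Your overall decomposition into the cycle \ref{item_support} $\Rightarrow$ \ref{item_ddc} $\Rightarrow$ \ref{item_misiurewicz} $\Rightarrow$ \ref{item_graph} $\Rightarrow$ \ref{item_support} matches the paper's, and the implications \ref{item_ddc} $\Rightarrow$ \ref{item_misiurewicz} and \ref{item_graph} $\Rightarrow$ \ref{item_support} are handled exactly as in the paper. For \ref{item_support} $\Rightarrow$ \ref{item_ddc} your sketch follows the paper's ($\P^k$-style) route, but note that the phrase ``an adaptation of Lemma \ref{lemma_intersezioni_tendono}'' hides the actual technical point: the currents $\Ww_{\Mm_n}$ are not smooth pullbacks of a form, and the convergence $L_n\to L$ requires a uniform integrability estimate for $\log\abs{\jac}$ near $C_f$ against the slices of these currents; the paper obtains it from the exponential integrability of psh functions with respect to $\mu_\lam$ in the large topological degree case, replacing the H\"older continuity of the Green potential used on $\P^k$.

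The genuine gap is in \ref{item_misiurewicz} $\Rightarrow$ \ref{item_graph}. You propose to pick a repelling periodic point $z_0\in J_{\lam_0}$, move it holomorphically, and rule out the degenerate case $\Gamma_\sigma\subset f^{n_0}(C_f)$ by ``counting countably many $n_0$ against $\sim d_t^n$ repelling candidates.'' This count does not work: weak equidistribution of $R_n$ towards $\mu_{\lam_0}$ (Theorem \ref{teo_equidistribution}) is perfectly compatible with \emph{every} repelling periodic point lying inside the countable union $\bigcup_{n_0} f_{\lam_0}^{n_0}(C_{f_{\lam_0}})$ --- the set of all repelling cycles is itself countable, hence $\mu_{\lam_0}$-null, so no statement about $\mu_{\lam_0}$ can place an individual periodic point outside a given $\mu$-null set; moreover Theorem \ref{teo_equidistribution} explicitly allows an exceptional analytic set inside the postcritical set. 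Even excluding containment at the single parameter $\lam_0$ would not suffice, since the graph must avoid the postcritical set over a whole neighbourhood. The paper resolves this by a different mechanism: it constructs (Theorem \ref{teo_motion_hyp_properties}, via Lemma \ref{lemma_mis2_costruzione_pallina_preimmagini}) a hyperbolic set $E_0\subset J_0$ carrying an ergodic invariant measure of entropy greater than $\log d^*_{k-1}$, which by Lemma \ref{lemma_entropy_mass_analytic} gives no mass to proper analytic subsets; $E_0$ therefore provides an ``uncountably large'' family of moved graphs not trapped in the postcritical set, and a Hurwitz-type argument on this family, combined with the absence of Misiurewicz parameters and with Lemma \ref{lemma_motion_hyp} to keep the motion inside the Julia sets, yields the graph required by \ref{item_graph}. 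Some substitute for this entropy/hyperbolic-set input is indispensable; your argument as written does not supply one.
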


Theorem \ref{teo_construct_web}
readily proves that \ref{item_graph} imples \ref{item_support}, while
Theorem \ref{teo_misiurewicz_intro_pl} gives the implication \ref{item_ddc} $\Rightarrow$ \ref{item_misiurewicz}.
The strategy for the 
two implications \ref{item_support} $\Rightarrow$ \ref{item_ddc}
and \ref{item_misiurewicz} $\Rightarrow$ \ref{item_graph}
follows the same lines of the one on $\P^k$. In particular, for the first one 
the only small difference is how to get an estimate (H\"older in $\eps$)
for the $\mu$-measure of a $\eps$-neighbourhood
of an analytic set.
In the case of endomorphisms of $\P^k$, this follows from the H\"older continuity of the potential of the Green current.
Here we can
exploit the fact that, for every psh function $u$, the function $e^{\abs{u}}$
is integrable with respect to
the equilibrium measure of a polynomial-like map
of large topological degree (\cite[Theorem 2.34]{ds_cime}).
We
refer to \cite{tesi}
for a complete proof.

For what concerns the last missing implication
the proof
 can be reduced,
 (as in the case of endomorphisms of $\P^k$),
 by means of Hurwitz Theorem, to the proof of the existence (Theorem \ref{teo_motion_hyp_properties})
 of a hyperbolic
 set
 satisfying certain properties (see \cite{tesi}).

\begin{teo}\label{teo_motion_hyp_properties}
 Let $f: \Uu\to\Vv$ be a holomorphic family of polynomial-like maps of large topological degree $d_t$.
 Then there exists an integer $N$, a compact hyperbolic set $E_0 \subset J_0$ for $f_0^N$
 and a continuous holomorphic motion $h: B_r \times E_0 \to \C^k$ (defined on some small ball $B_r$ of radius $r$
 and centered at $0$) such that:
 \begin{enumerate}
  \item the repelling periodic points of $f_0^N$ are dense in $E_0$ 
  and $E_0$ is not contained in the postcritical set of $f_0^N$;
  \item $h_\lam (E_0) \in J_\lam$ for every $\lam \in B_r$;
  \item if $z$ is periodic repelling for $f_0^N$ then $h_\lam (z)$
  is periodic and repelling for $f_\lam^N$.
 \end{enumerate}
\end{teo}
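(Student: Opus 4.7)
The plan is to build $E_0$ as the limit set of an iterated function system of contracting inverse branches of a sufficiently high iterate $f_0^N$, and then to move $E_0$ holomorphically in the parameter by extending these inverse branches to small perturbations of $f_0$, in the spirit of Proposition \ref{prop_mis2_costruzione_tubi}.

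First, I apply Lemma \ref{lemma_mis2_costruzione_pallina_preimmagini} to $f_0$ with a small ball $B$ meeting $J_0$ and contraction parameter $\rho = 1/4$, obtaining a ball $A \subset B$ (which meets $J_0$ by the Briend-Duval construction), a constant $\alpha > 0$, and, for every sufficiently large $N$, a collection of inverse branches $h_1, \ldots, h_{K_N}$ of $f_0^N$ on $\bar A$ with $h_i(\bar A) \subset A$, $\Lip h_i \leq 1/4$, and $K_N \geq \alpha d_t^N$. Fixing such an $N$, define
\[
E_0 := \bigcap_{n \geq 0} \bigcup_{(i_0,\ldots,i_{n-1})} h_{i_0} \circ \cdots \circ h_{i_{n-1}}(\bar A).
\]
Since different inverse branches of $f_0^N$ over the same point give disjoint images, the cylinders $h_i(\bar A)$ are pairwise disjoint, and the standard coding identifies $E_0$ with a full shift on $K_N$ symbols. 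Hence $E_0$ is compact, $f_0^N$-invariant, hyperbolic with expansion at least $4$, and the unique fixed point $z_\omega$ of each contraction $h_\omega = h_{i_0} \circ \cdots \circ h_{i_{m-1}}$ is a repelling periodic point of $f_0^N$, lying in $J_0$ by the Briend-Duval construction. These points are dense in $E_0$, so $E_0 \subset J_0$. For the postcritical-set condition in (1), $E_0$ carries the Bernoulli measure of positive entropy $\log K_N$ pushed forward from the shift, and such a measure cannot be contained in the pluripolar postcritical set $\bigcup_{n \geq 1} f_0^{Nn}(C_{f_0^N})$.

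Next, I would adapt the openness argument in the proof of Proposition \ref{prop_mis2_costruzione_tubi} to extend each $h_i$ to a holomorphic inverse branch $h_i^\lambda$ of $f_\lambda^N$ on a cylinder $B_r \times \bar A$, with $\Lip h_i^\lambda \leq 1/3$ and $h_i^\lambda(\bar A) \subset A$ uniformly in $\lambda \in B_r$, once $r$ is sufficiently small. The corresponding IFS attractor
\[
E_\lambda := \bigcap_{n \geq 0} \bigcup_{(i_0,\ldots,i_{n-1})} h^\lambda_{i_0} \circ \cdots \circ h^\lambda_{i_{n-1}}(\bar A)
\]
is hyperbolic for $f_\lambda^N$ with the same symbolic coding, and the map sending each coded point of $E_0$ to the corresponding coded point of $E_\lambda$ defines $h : B_r \times E_0 \to \C^k$. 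Applying the implicit function theorem to $f_\lambda^{Nm}(z) = z$ at each repelling $z_\omega$ shows that $z_\omega$ continues uniquely as a repelling periodic point $z_\omega(\lambda)$ of $f_\lambda^{Nm}$, and by uniqueness of the attracting fixed point of the contraction $h^\lambda_\omega$ this continuation coincides with $h_\lambda(z_\omega)$. Uniform contraction of the IFS and density of periodic continuations yield continuity of $h$, holomorphy of each map $\lambda \mapsto h_\lambda(z)$, and injectivity of $h_\lambda$ (the cylinders $h^\lambda_\omega(\bar A)$ remain pairwise disjoint in $\lambda$). This secures (3) and all motion properties, save for the inclusion in the Julia set.

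The main obstacle is point (2), because repelling cycles may a priori leave the Julia set of $f_\lambda$, as pointed out after Lemma \ref{lemma_open_j}. I would handle it using the lower semicontinuity of $J_\lambda$ for families of large topological degree: after shrinking $r$, for every $\lambda \in B_r$ the open set $A$ meets $J_\lambda$, so we may pick $z_0(\lambda) \in A \cap J_\lambda$. For any finite word $\omega$, the image $h_\omega^\lambda(z_0(\lambda))$ is a preimage of $z_0(\lambda)$ under $f_\lambda^{N|\omega|}$, hence lies in $J_\lambda$. The uniform $1/3$-contraction of the IFS forces $\{h_\omega^\lambda(z_0(\lambda))\}$ to become dense in $E_\lambda$ as $|\omega| \to \infty$, so the closedness of $J_\lambda$ in $V_\lambda$ yields $h_\lambda(E_0) = E_\lambda \subset J_\lambda$, proving (2). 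The remaining technical point is the uniform (in $\lambda$) extension and disjointness of the cylinder images, which follows from the same openness/continuity argument already used in Proposition \ref{prop_mis2_costruzione_tubi}.
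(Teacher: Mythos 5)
Your construction of $E_0$ as the attractor of the iterated function system of inverse branches furnished by Lemma \ref{lemma_mis2_costruzione_pallina_preimmagini} is exactly the paper's. For points (2) and (3) you take a legitimate variant route: the paper invokes the classical holomorphic motion of hyperbolic sets (\cite[Appendix A.1]{bbd2015}) for (3) and proves (2) via Lemma \ref{lemma_motion_hyp}, which locates a Julia point near each repelling periodic point of $E_\lambda$ and pulls it back through contracting inverse branches; you instead continue the IFS itself in $\lambda$ and obtain $E_\lambda\subset J_\lambda$ by pushing a point $z_0(\lambda)\in A\cap J_\lambda$ through all words of the IFS. Both arguments rest on the same ingredients (lower semicontinuity of $\lambda\mapsto J_\lambda$ for large topological degree, the backward invariance $f_\lambda^{-1}(J_\lambda)\subset J_\lambda$ coming from $f_\lambda^*\mu_\lambda=d_t\,\mu_\lambda$, and uniform contraction), and yours is, if anything, slightly more direct since it does not pass through the periodic points.

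There is, however, one genuine gap: your justification that $E_0$ is not contained in the postcritical set. You argue that the Bernoulli measure $\nu$ has positive entropy $\log K_N$ and that such a measure ``cannot be contained in the pluripolar postcritical set.'' This principle is false in general: a measure of positive entropy can perfectly well be carried by a proper analytic (hence pluripolar) subset --- think of an invariant hypersurface on which the restricted dynamics has positive entropy --- and in the polynomial-like setting even the equilibrium measure may charge analytic sets, as the paper stresses after Theorem \ref{teo_construction_mu}. The correct tool is Lemma \ref{lemma_entropy_mass_analytic}: $\nu$ gives no mass to analytic subsets of dimension $\leq k-1$ \emph{provided} $h_\nu>\log d^*_{k-1}$, and since the postcritical set of $f_0^N$ is a countable union of such analytic subsets this yields $\nu(\mbox{postcritical set})=0$. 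This is precisely why the paper calibrates $N$ so that the number of branches exceeds $\pa{d^*_{k-1}}^N$. Your choice $K_N\geq\alpha\, d_t^N$ does give $h_\nu\geq\log\pa{\alpha\, d_t^N}>N\log d^*_{k-1}$ once $N$ is large (because $d_t>d^*_{k-1}$ by the large-topological-degree hypothesis), so the conclusion is salvageable, but the argument must be routed through the entropy-versus-dynamical-degree comparison of Lemmas \ref{lemma_entropia_alla_gromov} and \ref{lemma_entropy_mass_analytic}, not through pluripolarity.
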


To prove Theorem \ref{teo_motion_hyp_properties} on $\P^k$,
one needs
to ensure that a hyperbolic set of sufficiently large
entropy
cannot be contained in the postcritical set and
must, on the other hand, be contained in the Julia set.
In our setting, the analogue of the first propery is given
by Lemma \ref{lemma_entropy_mass_analytic} below, which is a direct consequence of Lemma \ref{lemma_entropia_alla_gromov}, combined
with a relative version of the Variational principle.

\begin{lemma}\label{lemma_entropia_alla_gromov}
Let $f: U\to V$ be a polynomial like map of topological degree $d_t$. Let $K$
be the filled Julia set, $X$ an
analytic subset of $V$ of dimension $p$, and $\delta_n$ be such that
$\norm{f^n_* [X]}_{U}\leq \delta_n$.
Then
\[h_t (f, \b U\cap X)= h_t (f, K\cap X) \leq \limsup_{n\to \infty} \frac{1}{n} \log \delta_n \leq d_{p}^*.\]
\end{lemma}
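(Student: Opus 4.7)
My plan is a Gromov--Yomdin style entropy bound via volume growth of iterated graphs, so that the mass estimate $\|(f^n)_*[X]\|_U \le \delta_n$ directly controls the topological entropy on $K\cap X$. The final inequality $\limsup_n \frac{1}{n}\log\delta_n \le \log d_p^*$ is immediate from Definition \ref{defi_dynstardeg} once one rescales $[X]$ to have mass at most $1$ on a fixed neighbourhood $W' \Subset V$ of $K$.

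For the equality $h_t(f,\b U\cap X)=h_t(f,K\cap X)$, I would observe that Bowen $(n,\eps)$-separated subsets require the iterates $x,f(x),\dots,f^{n-1}(x)$ to lie in $\b U$ (since $f$ is only defined on $U$); a point of $\b U\cap X$ whose full forward orbit stays in $\b U$ belongs to $K$, while points leaving $\b U$ after finitely many steps contribute nothing after passing to $\limsup_n \frac{1}{n}\log$. Hence the two entropies coincide.

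For the central estimate, I would introduce the iterated graph
\[
\Gamma_n := \{(x, f(x), \dots, f^{n-1}(x)) : x \in K\cap X\} \subset U^n,
\]
a $p$-dimensional analytic subset parametrised by $\pi_0 \colon \Gamma_n \to K\cap X$. Equipping $U^n$ with the sum Kähler metric $\omega_n := \sum_{k=0}^{n-1}\pi_k^*\omega$, a standard Gromov-type expansion of $\omega_n^p$, combined with the identity $\pi_0^*\pi_k^*\omega = (f^k)^*\omega$ on $\Gamma_n$ and with the definition of mass for the pushforwards $(f^k)_*[X]$, yields
\[
\mathrm{vol}_{\omega_n}(\Gamma_n) \le C_p \sum_{k=0}^{n-1}\|(f^k)_*[X]\|_U \le C_p \sum_{k=0}^{n-1}\delta_k,
\]
for a constant $C_p$ depending only on $p$.

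Any $(n,\eps)$-separated set $E\subset K\cap X$ lifts through $x\mapsto (x,f(x),\dots,f^{n-1}(x))$ to a set of points of $\Gamma_n$, any two of which differ by at least $\eps$ in some single factor $\C^k$, hence are at mutual distance $\ge\eps$ for the $\omega_n$-metric. A Lelong monotonicity lower bound for the volume of a $p$-dimensional analytic set inside a Euclidean ball of radius $\eps/2$ centred at any of its points then produces the counting estimate $\#E \le C'_p\,\eps^{-2p}\,\mathrm{vol}_{\omega_n}(\Gamma_n)$. Passing to $\limsup_n \frac{1}{n}\log$ eliminates the $\eps$-dependence and delivers $h_t(f,K\cap X)\le \limsup_n \frac{1}{n}\log\delta_n$. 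The main subtlety is making the Lelong/counting step uniform in $n$, since $\Gamma_n$ sits inside the product $U^n$ of growing dimension; the key observation is that two lifted points are already at distance $\ge\eps$ in a single $\C^k$-factor, so monotonicity is applied in a ball of $\C^k$ rather than of $\C^{kn}$, and uniformity becomes automatic.
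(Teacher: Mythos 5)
Your overall strategy --- iterated graphs $\Gamma_n$, a volume bound via the mass of the pushforwards, and a Lelong--Bishop lower bound applied to an $(n,\eps)$-separated set --- is exactly the Gromov/Dinh--Sibony route that the paper invokes (the paper only cites this strategy and refers to \cite{tesi} for details). The gap is in your central volume estimate. Expanding $\omega_n^p=\bigl(\sum_{j=0}^{n-1}\pi_j^*\omega\bigr)^p$ on $\Gamma_n$ and pulling back to $X$ gives
\[
\mathrm{vol}_{\omega_n}(\Gamma_n)=\frac{1}{p!}\sum_{0\le k_1,\dots,k_p\le n-1}\int (f^{k_1})^*\omega\wedge\dots\wedge(f^{k_p})^*\omega\wedge[X],
\]
and only the diagonal terms $k_1=\dots=k_p=k$ are equal to $\norm{(f^k)_*[X]}$. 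For $p\ge2$ the mixed terms with distinct $k_i$ are not masses of any $(f^k)_*[X]$, and there is no pointwise or integral inequality controlling $\alpha_1\wedge\dots\wedge\alpha_p$ by the $\alpha_i^{\,p}$ for positive $(1,1)$-forms (take $\alpha_1=i\,dz_1\wedge d\bar z_1$ and $\alpha_2=i\,dz_2\wedge d\bar z_2$ in $\C^2$: the mixed product is positive while both pure squares vanish). Handling these cross terms is precisely the non-trivial content of the Dinh--Sibony adaptation: one pushes forward by $f^{k_1}$ and runs an induction in which the lower-order degrees $d_j^*$, $j<p$, intervene; in the $\P^k$ case this step is done cohomologically. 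So the inequality $\mathrm{vol}_{\omega_n}(\Gamma_n)\le C_p\sum_k\delta_k$ does not follow from ``a standard expansion'', and your argument is complete only for $p=1$.

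Two smaller points. First, $K\cap X$ is not an analytic set, so $\Gamma_n$ should be taken over $X\cap U\cap f^{-1}(U)\cap\dots\cap f^{-(n-1)}(U)$, with the equality of entropies on $\b U\cap X$ and $K\cap X$ handled as in Dinh--Sibony. Second, in the counting step the Lelong lower bound must be applied to the $p$-dimensional analytic set $\Gamma_n$ inside a ball of the ambient $(\C^k)^n$: a single-factor projection of $\Gamma_n$ need not be analytic of dimension $p$, so ``applying monotonicity in a ball of $\C^k$'' is not meaningful as stated. Uniformity in $n$ nevertheless holds for the standard reason that the constant in $\mathrm{vol}\pa{A\cap B(a,r)}\ge c_p r^{2p}$ depends only on $\dim A=p$ and not on the ambient dimension.
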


This Lemma is proved by following the strategy used
by Gromov \cite{gromov2003entropy} to estimate
 the topological entropy of endomorphisms of $\P^k$, and adapted by Dinh and Sibony \cite{ds_allure, ds_cime}
 to the
  polynomial-like setting. Since only minor modifications are needed, we refer to \cite{tesi}
  for a complete proof.

\begin{lemma}\label{lemma_entropy_mass_analytic}
Let $g$ be a polynomial-like map of large topological degree. Let $\nu$
be an ergodic invariant probability measure for $g$ whose metric entropy
$h_\nu$ satisfies $h_\nu > \log d^{*}_{p}$. Then, $\nu$
gives no mass to analytic subsets of dimension $\leq p$.
\end{lemma}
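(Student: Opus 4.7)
The strategy is a standard entropy-comparison argument, combining Lemma \ref{lemma_entropia_alla_gromov} with Bowen's relative variational principle. Argue by contradiction: assume $\nu(X) > 0$ for some analytic $X\subset V$ of dimension $\leq p$. Define
\[
A \;:=\; \bigcup_{n\geq 0} g^{-n}(X).
\]
Each $g^{-n}(X)$ is analytic of dimension $\leq p$, since $g^n$ is a finite-to-one holomorphic map and preimages of analytic sets under such maps are analytic of the same dimension; thus $A$ is a countable union of analytic sets of dimension $\leq p$.

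The identity $g^{-1}(A) = \bigcup_{n\geq 1} g^{-n}(X) \subset A$, combined with the $g$-invariance $\nu(g^{-1}(A)) = \nu(A)$, shows that $A$ and $g^{-1}(A)$ coincide modulo $\nu$. Ergodicity of $\nu$ then forces $\nu(A) \in \{0,1\}$, and since $X \subset A$ satisfies $\nu(X) > 0$ we conclude $\nu(A) = 1$. Moreover, any $g$-invariant probability measure is supported on the filled Julia set $K$ (if $\nu$ gave positive mass to the escaping set, iterated $g$-invariance would contradict $\nu(V \setminus U) = 0$), so $\nu(K \cap A) = 1$ as well.

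By Lemma \ref{lemma_entropia_alla_gromov} applied to each analytic set $g^{-n}(X)$ of dimension $\leq p$, we have $h_{top}(g, K \cap g^{-n}(X)) \leq \log d^*_p$ for every $n$. Combining the countable stability of Bowen's topological entropy on Borel sets with the relative variational principle (Bowen: $h_\nu(g) \leq h^B_{top}(g, Y)$ for any Borel $Y$ with $\nu(Y)=1$), applied to $Y = K\cap A$, we obtain
\[
h_\nu(g) \;\leq\; h^B_{top}(g,\,K\cap A) \;=\; \sup_{n\geq 0} h^B_{top}(g, K\cap g^{-n}(X)) \;\leq\; \log d^*_p,
\]
contradicting the hypothesis $h_\nu > \log d^*_p$.

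\emph{Main obstacle.} The delicate ingredients are the countable stability of Bowen's topological entropy on Borel sets and the relative variational principle for non-compact sets; both are classical but are not explicit in the paper, and one must check that Lemma \ref{lemma_entropia_alla_gromov}, stated as a bound on $h_{top}$ on the non-invariant set $K\cap X$, is indeed the Bowen-Gromov entropy needed for the variational principle (this is implicit in its proof via $(n,\varepsilon)$-separated sets). A fully self-contained alternative would be to extend the separated-set bookkeeping of Lemma \ref{lemma_entropia_alla_gromov} directly to $K\cap A$, using Poincaré recurrence to reduce, at any given scale, to the contributions of finitely many layers $K \cap g^{-n}(X)$.
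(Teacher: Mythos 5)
Your proof is correct and follows exactly the route the paper indicates (it gives no detailed proof, only the one-line recipe "Lemma \ref{lemma_entropia_alla_gromov} combined with a relative version of the Variational principle", referring to \cite{tesi} for details): saturate $X$ by preimages, use ergodicity to get a full-measure countable union of analytic sets, bound each piece by the Gromov-type estimate, and conclude via Bowen's relative variational principle together with countable stability of his dimension-like entropy. The technical points you flag (that Bowen's entropy is dominated by the separated-set entropy bounded in Lemma \ref{lemma_entropia_alla_gromov}, and that the mass growth of $g^m_*[g^{-n}(X)]$ still has exponential rate at most $\log d_p^*$) are handled correctly.
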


The second
problem (i.e., ensuring that the hyperbolic set stays inside the Julia set)
will be adressed
by means of the following Lemma.

\begin{lemma}
[see also \cite{dujardin2016non}, Lemma 2.3]
\label{lemma_motion_hyp}
Let $f:\Uu\to \Vv$ 
be a holomorphic family of polynomial-like maps with parameter space $M$.
Let $E_{0}$
be a hyperbolic set  for $f_{0}$ contained in $J_{0}$, such that repelling periodic points are dense in $E_{0}$
and $\norm{\pa{df_\lam}^{-1}}^{-1}>K>3$
on a neigbourhood of $\pa{E_0}_\tau$ in the product space.
Let $h$ be a continuous holomorphic motion of $E_{\lam_0}$
as a hyperbolic set on some ball $B\subset M$, preserving the repelling cycles.
Then $h_\lam \pa{ E_{0}}$ is contained in $J_\lam$, for $\lam$ sufficiently close to $\lam_0$.
\end{lemma}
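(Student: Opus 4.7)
The strategy is to reduce the problem, via the density of repelling periodic points in $E_0$ and the closedness of $J_\lam$, to showing that every moved repelling periodic point lies in $J_\lam$, and then to catch such a point by acting on a nearby $J_\lam$-point with contracting inverse branches.

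More precisely, since the repelling periodic points of $f_{\lam_0}$ are dense in $E_0$ and $h$ is continuous in the space variable, the set $\set{h_\lam(p_0) \colon p_0 \in E_0 \text{ periodic repelling}}$ is dense in $h_\lam(E_0)$; hence, $J_\lam$ being closed, it suffices to show $p_\lam := h_\lam(p_0) \in J_\lam$ for every such $p_0$, say of period $m$. To handle such a $p_\lam$, I would first construct a local inverse branch $g_\lam$ of $f_\lam^m$ defined on a ball $B(p_\lam, r)$, fixing $p_\lam$, and contracting by at most $(1/K)^m < (1/3)^m$. This uses the uniform expansion bound $\norm{\pa{df_\lam}^{-1}}^{-1} > K > 3$ on a neighborhood of $\pa{E_0}_\tau$: standard size/distortion estimates for holomorphic inverse branches give local inverse branches of $f_\lam$ on balls of a uniform radius $r_0>0$, which can be composed along the finite orbit $p_\lam, f_\lam(p_\lam), \ldots, f_\lam^{m-1}(p_\lam)$ contained in $h_\lam(E_0)$ to yield $g_\lam$ on a ball of size independent of the orbit.

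To produce a $J_\lam$-point inside $B(p_\lam, r)$, I would invoke the lower semicontinuity of $J_\lam$ for families of large topological degree (recalled in Section \ref{section_hol_families}): since $p_0 \in J_{\lam_0}$, for $\lam$ close to $\lam_0$ the set $J_\lam$ meets any prescribed neighborhood of $p_0$, and combined with $p_\lam \to p_0$ one obtains $z_\lam \in J_\lam \cap B(p_\lam, r)$. The total invariance $f_\lam^{-1}(J_\lam) = J_\lam$, which follows from $f_\lam^* \mu_\lam = d_t \mu_\lam$ and $J_\lam = \Supp \mu_\lam$, then ensures $g_\lam^n (z_\lam) \in J_\lam$ for every $n$. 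Since $g_\lam$ is a strict contraction fixing $p_\lam$, we have $g_\lam^n (z_\lam) \to p_\lam$, and the closedness of $J_\lam$ yields $p_\lam \in J_\lam$, as required.

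The main obstacle I anticipate is in the construction step: making the inverse branches $g_\lam$ of $f_\lam^m$ defined on balls of a radius $r>0$ that is uniform in $\lam$ (near $\lam_0$) and along the $m$-orbit of $p_\lam$. This is precisely where the quantitative bound $K>3$, rather than merely $K>1$, becomes useful, since it gives enough room to apply Koebe/Briend--Duval type size estimates guaranteeing that an inverse branch of $f_\lam$ on a ball of radius $r_0$ has image in a ball of radius $r_0/K$, so that the $m$-fold composition remains defined on a ball of controlled size uniformly in $\lam$ and in the choice of orbit inside $h_\lam(E_0)$.
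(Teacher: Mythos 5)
Your proposal is correct and follows essentially the same route as the paper: both reduce to the moved repelling periodic points via density, build a uniformly contracting inverse branch of $f_\lam^m$ along the periodic orbit on balls of radius controlled by the expansion bound $K>3$, use the (uniform, by compactness of $J_0$) lower semicontinuity of $\lam\mapsto J_\lam$ to place a point of $J_\lam$ in that ball, and conclude by backward invariance of $J_\lam$ together with the contraction. The only cosmetic difference is that the paper phrases the reduction through Lemma \ref{lemma_open_j} and the need to make its neighbourhood uniform over all cycles, which your argument handles directly.
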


\begin{proof}
We denote
by $\gamma_z$ the motion of a point $z\in E_0$
as part of the given holomorphic motion of the hyperbolic set.

First of all, notice that repelling points must be dense in $E_\lam$ for every $\lam$, by the continuity of the motion
and the fact that they are preserved by it.
Moreover, by Lemma \ref{lemma_open_j}, every repelling cycle stays in $J_\lam$
for $\lam$ in a neighbourhood of $0$.
It is thus enough to ensure that this neighbourhood can be taken uniform for all the cycles.
 
Since $\norm{df_\lam^{-1}}^{-1}>3$
on a neighbourhood $\pa{E_0}_\tau$ of $E_0$
in the product space, we can restrict ourselves to $\lam\in B (0,\tau)$
and so assume that $\norm{df_\lam^{-1}}^{-1}>3$ on a $\tau$ neighbourhood
of every $z\in E_\lam$, for every $\lam$.
Moreover, since the set of motions $\gamma_z$ of points
in $E_0$ is compact (by continuity),
we can assume that $\gamma_z (\lam) \in B(z, \tau/10)$ for every $z\in E_0$ and $\lam$.
Finally, by the lower semicontinuity of the Julia set (\cite{ds_cime}),
up to shrinking again the parameter space we can assume that
$J_0 \subset (J_\lam)_{\tau/10}$ for every $\lam$. These two assumptions imply that, for every $\lam$
and every $z\in E_\lam$, there exists at least a point of $J_\lam$ in the ball $B(z,\tau/2)$.
 Consider now any $n$-periodic repelling point $p_0$ in $E_\lam$ for $f_\lam$, and let
 $\set{p_i}=\set{f_\lam^i (p_0)}$ be its cycle (and thus with $p_0 = p_n$).
Fix a point $z_0 \in J_{\lam} \cap B(p_0, \tau)$.
 By hyperbolicity (and since without loss of generality we can assume that
 $\tau \leq \pa{1+\sup_{B_\tau}\norm{f_\lam}_{C^2}}^{-1}$),
 every ball $B(p_i, \tau)$ has an inverse branch for $f_\lam$
 defined on it, with image strictly 
 contained in the ball $B(p_{i-1}, \tau)$ and strictly contracting.
 This implies that there exists an inverse branch $g_0$ for $f^n_\lam$ of $B(p_0, \tau)$, strictly contracting and with image strictly
 contained in $B(p_0, \tau)$ (and containing $p_0$).
 So,
 a sequence of inverse images of $z_0$
 for $f_\lam$ must converge to $p_0$, and so $p_0\in J_\lam$. The assertion is proved.
\end{proof}

\begin{proof}[Proof of Theorem \ref{teo_motion_hyp_properties}]
 First of all, we need the hyperbolic set $E_0$.
 By Lemma \ref{lemma_mis2_costruzione_pallina_preimmagini},
 we can take a closed ball
 $A$, a constant $\rho >0$
 and a sufficiently large $N$
 such that the cardinality $N'$
 of $C_N (A,\rho)$ (see \eqref{eq_def_Cn})
  satisfies $N' \geq \pa{d^*_{k-1}}^N$ (since by assumption $d^*_{k-1} < d_t$).
  We then consider the set $E_0$ given by the intersection
  $E_0 = \cap_{k\geq 0} E_k$, where $E_k$ is given by
  \[
  E_k := \set{g_{i_1} \circ \dots \circ g_{i_k} (A) \colon (i_1, \dots, i_k) \in \set{1, \dots, N'}^{k}}
  \]
 where the $g_i$'s are the elements of $C_N (A, \rho)$.
 The set $E_0$ is then hyperbolic, and contained in $J_0$ (since $A\cap J\neq \emptyset$,
 every point in $E_0$ is accumulated by points in the Julia set). Moreover, repelling cycles (for $f_0^N$)
 are dense in $E_0$.

 Let $\Sigma : \set{1, \dots, N'}^{\N^*}$ and fix a point $z\in E_0$.
 Notice that the map $\omega: \Sigma \to E_0$ given by
 $\omega (i_1, i_2, \dots)= \lim_{k\to\infty} g_{i_1} \circ \dots \circ g_{i_k} (z)
 $
 satisfies the relation $f^N \circ \omega = \omega \circ s$,
 where $s$ denotes the left shift
 $
 (i_1,  \dots, {i_k}, \dots) \stackrel{s}{\mapsto} (i_2, \dots, i_{k+1}, \dots)$.
 We can thus pushforward with $\omega$ the uniform product measure on $\Sigma$. Since this
 is a $s$-invariant ergodic measure, its pushforward $\nu$ is an $f^N$-invariant
 ergodic measure on $E_0 \subset J_0$.
Te metric entropy of $\nu$ thus satisfies
 $h_\nu \geq \log N' \geq \log \pa{d^*_{k-1}}^N$, and this implies (by Lemma \ref{lemma_entropy_mass_analytic})
 that $\nu$ gives no mass to analytic subsets. In particular, $E_0$
 is not contained in the postcritical set of $f_0$.
 
 We need to prove the points 2 and 3.
 It is a classical result (see \cite[Appendix A.1]{bbd2015}) that $E_0$ admits a continuous holomorphic motion
 that preserves the repelling cycles, and thus 3 follows.
 The second point then follows from Lemma \ref{lemma_motion_hyp} (and the density of the repelling cycles in $E_0$).
\end{proof}

Once we have established the existence of a hyperbolic set as in Theorem \ref{teo_motion_hyp_properties},
the proof of the implication \ref{item_misiurewicz} $\Rightarrow$ \ref{item_graph}
is
the same as on $\P^k$ (see \cite{bbd2015, tesi}).

\subsection{Holomorphic motions}\label{section_hol_motions}

The following Theorem \ref{teo_equiv_2}
gives the equivalence between the conditions
 \ref{item_teo_intro_rep}
and
\ref{item_teo_intro_lam}
in Theorem \ref{teo_equiv_intro}.
We need the following definition.

\begin{defi}\label{defi_acritical}\index{Acritical web}
An equilibrium web $\Mm$ is 	\emph{acritical}  if $\Mm (\Jj_s)=0$, where
\[
\Jj_s := \set{\gamma \in \Jj \colon \Gamma_\gamma \cap \pa{\bigcup_{m,n\geq 0} f^{-m} \pa{f^n \pa{C_f}}} \neq \emptyset}.
\]
\end{defi}

\begin{teo}\label{teo_equiv_2}
 Let $f:\Uu\to\Vv$
 be a holomorphic family of polynomial-like maps of large
 topological degree $d_t \geq 2$. Assume that the parameter space is simply connected.
 Then the following are equivalent:
 \begin{MB}
  \item\label{item_cycles_move} asymptotically all $J$-cycles move holomorphically;
  \item\label{item_mjs} there exists an acritical equilibrium web $\Mm$;
  \item\label{item_laminar} there exists an equilibrium lamination for $f$.
 \end{MB}
 Moreover, if the previous conditions hold, the system admits a unique equilibrium web, which is ergodic and mixing.
\end{teo}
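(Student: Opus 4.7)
My plan is to establish the cycle of implications \ref{item_cycles_move} $\Rightarrow$ \ref{item_mjs} $\Rightarrow$ \ref{item_laminar} $\Rightarrow$ \ref{item_cycles_move}, and then derive uniqueness, ergodicity, and mixing from the $d_t$-to-one structure of the lamination.

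The implication \ref{item_cycles_move} $\Rightarrow$ \ref{item_mjs} follows from the first part of Theorem \ref{teo_construct_web}, which directly yields an equilibrium web $\Mm$ as a weak limit of $\Mm_n := d_t^{-n}\sum_j \delta_{\rho_{n,j}}$; the only point to verify is acriticality. Writing $\Jj_s = \bigcup_{m,n\geq 0} \Jj_s^{m,n}$ with $\Jj_s^{m,n}$ the set of graphs meeting $f^{-m}\pa{f^n(C_f)}$, I would argue that the graph of a persistent repelling periodic motion either meets $f^{-m}\pa{f^n(C_f)}$ only on a proper analytic subset of $M$ (and is thus not counted in $\Jj_s^{m,n}$) or is entirely contained in it, with the rigidity of periodic orbits trapped in an analytic post-critical set controlling the latter contribution by $o(d_t^p)$. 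For \ref{item_mjs} $\Rightarrow$ \ref{item_laminar}, define $\Ll := \Supp \Mm \setminus \Jj_s$ and check the four items of Definition \ref{defi_lamination}: item~(3) is immediate from acriticality; item~(2) follows from $\pa{p_\lam}_* \Mm = \mu_\lam$ together with the fact that $\mu_\lam$ does not charge analytic sets in the large-topological-degree setting; item~(1) is obtained from the isolation of repelling-periodic graphs provided by part~2 of Lemma \ref{lemma_property_web}, combined with their density and $\Ff$-equivariance; item~(4) follows from $f_\lam^*\mu_\lam = d_t\mu_\lam$ and the $\Ff$-invariance of $\Mm$.

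The main obstacle is the remaining implication \ref{item_laminar} $\Rightarrow$ \ref{item_cycles_move}, flagged in the introduction as the novel part of the theorem whose proof differs from \cite{bbd2015}. My plan is to transpose the Briend-Duval construction \cite{BriendDuval1} to the space $\Jj$ of holomorphic graphs. Using the uniform lower bound $\chi_i(f_\lam) \geq \tfrac{1}{2}\log(d_t/d^*_{k-1}) > 0$ on Lyapunov exponents recalled at the end of Section \ref{section_pl_large_top_degree}, a Pesin-type analysis for the $\Ff$-action on $(\Jj,\Mm)$ should show that, for $\Mm$-typical $\gamma$ and large $n$, a definite fraction of the $d_t^n$ preimages $\sigma \in \Ll$ with $\Ff^n(\sigma) = \gamma$ admit inverse branches of $\Ff^n$ that are strongly contracting on a small neighbourhood of $\sigma$ in $\Jj$. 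A Kac-type return argument inside such a neighbourhood then produces a persistent periodic graph in each such preimage family, and the fiberwise expansion of $df_\lam^n$ combined with Lemma \ref{lemma_open_j} ensures that these graphs are repelling and remain in $J_\lam$ on all of $M$. Matching the resulting count $d_t^n + o(d_t^n)$ against the equidistribution of Theorem \ref{teo_equidistribution} finally produces the data required by Definition \ref{defi_rep_asymptotically_move_hol}. The delicate technical step will be to set up the Pesin expansion uniformly in $\lam$ while keeping the return balls small enough that the produced persistent periodic graphs correspond to repelling $J$-cycles at every parameter.

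For uniqueness, ergodicity, and mixing, the $d_t$-to-one structure of $\Ff$ on $\Ll$ exhibits $(\Jj,\Ff,\Mm)$ as a measurable extension of each $(J_\lam, f_\lam, \mu_\lam)$ with constant Jacobian, so the ergodicity and mixing of the base systems (Theorem \ref{teo_construction_mu}) lift to $(\Jj,\Ff,\Mm)$ by a standard transfer argument. Uniqueness of the web follows because any other equilibrium web is supported on the same lamination (its acriticality being forced by the absence of Misiurewicz parameters via the equivalences of Theorem \ref{teo_equiv}) with the same marginals $\mu_\lam$, and the $d_t$-to-one ergodic structure of $\Ff$ then forces it to coincide with $\Mm$.
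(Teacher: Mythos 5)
Your overall architecture (the cycle of implications, with a Briend--Duval argument transposed to the space of graphs for \ref{item_laminar} $\Rightarrow$ \ref{item_cycles_move}) matches the paper's, but several steps have genuine problems. First, your acriticality argument for \ref{item_cycles_move} $\Rightarrow$ \ref{item_mjs} misreads the definition of $\Jj_s$: a graph whose intersection with $f^{-m}\pa{f^n(C_f)}$ is a nonempty proper analytic subset of $M$ still satisfies $\Gamma_\gamma\cap(\cdots)\neq\emptyset$ and hence \emph{does} belong to $\Jj_s$, so the dichotomy you propose does not get off the ground. The paper instead obtains acriticality (Proposition \ref{prop_graph_then_acritical}) by passing through the absence of Misiurewicz parameters and the equivalences of Theorem \ref{teo_equiv}. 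Second, and more seriously, for \ref{item_mjs} $\Rightarrow$ \ref{item_laminar} the disjointness of the graphs (item 1 of Definition \ref{defi_lamination}) cannot be extracted from Lemma \ref{lemma_property_web}(2), which only separates a \emph{repelling periodic} graph from the other graphs of the support at one parameter; two generic elements of $\Supp\Mm\setminus\Jj_s$ could a priori cross. The actual mechanism is the backward-contraction statement, Proposition \ref{LemSF}: iterated inverse branches along $\h\Mm$-a.e.\ history contract tubular neighbourhoods exponentially, and combined with Poincar\'e recurrence this is what forces crossing graphs to coincide. This Pesin-type input, which you defer entirely to the last implication, is already indispensable here.

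Third, mixing and ergodicity do not ``lift by a standard transfer argument'' from a factor to an extension (a product with an identity map is a non-ergodic extension of a mixing system). What rescues the claim is that, once the lamination exists, $p_\lam$ is injective on a full-measure set of graphs (precisely because the graphs are pairwise disjoint), so $(\Ll,\Ff,\Mm)$ is measurably \emph{isomorphic} --- not merely an extension --- to $(J_\lam,f_\lam,\mu_\lam)$; this also gives uniqueness. Finally, for \ref{item_laminar} $\Rightarrow$ \ref{item_cycles_move} your plan is correct in spirit and close to the paper's: the paper isolates an abstract statement (Lemma \ref{lemma_bd_metric}) on a compact metric space whose hypotheses (P1)--(P2) are supplied by Proposition \ref{LemSF}, and the repelling periodic graphs are produced not by a Kac-type return argument but as fixed points of the contracting inverse branches $\Ff^{-n}_{\h x}$ mapping a small closed set into itself; these branches are then counted from below using the constant-Jacobian property and the mixing of $\Mm$, and the comparison with Lemma \ref{lemma_quasi_asintotic_motion} (which gives $d_t^n+o(d_t^n)$ periodic graphs) shows that asymptotically all elements of $\Pp_n$ are repelling. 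You should make this counting mechanism explicit, since it is exactly where mixing and constant Jacobian enter.
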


As we mentioned in the introduction,
we shall only show how to recover the asymptotic holomorphic motion of the repelling cycles from the two conditions \ref{item_laminar}
and \ref{item_mjs}. Indeed, the construction
of an equilibrium lamination starting from an acritical web
is literally the same as in the case of $\P^k$. The crucial point in that
proof is establishing the following backward contraction
property (Proposition  \ref{LemSF})
(which is actually used in both the implications
\ref{item_mjs}$\Rightarrow$\ref{item_laminar}$\Rightarrow$\ref{item_cycles_move}).
We set $\XX:= \JJ\setminus \Jj_s$ (notice that this is a full measure subset for an acritical web)
and let $\pa{\h \XX, \h \FF, \h \MM}$ be the \emph{natural extension} (see \cite{CFS}) of the system $\pa{\XX,\FF,\MM}$, i.e.,
the set of the histories of elements of $\XX$
\[\widehat{\gamma}:=(\cdots,\gamma_{-j},\gamma_{-j+1},\cdots,\gamma_{-1},\gamma_0,\gamma_1,\cdots),\]
where ${\mathcal F}(\gamma_{-j})=\gamma_{-j+1}$.
The map $\FF$
lifts to a bijection $\h \FF$ given by
\[
\widehat{\mathcal F}(\widehat {\gamma}):=(\cdots {\mathcal F}(\gamma_{-j}),{\mathcal F}(\gamma_{-j+1})\cdots)
\]
and thus correspond to the shift operator.
$\h \MM$ is the only measure on $\h \XX$ such that
$(\pi_{j})_\star\left(\widehat{\mathcal M}\right)={\mathcal M}$
for any projection $\pi_j:\widehat{\mathcal X}\to\widehat{\mathcal X}$ given by
$\pi_j(\widehat \gamma)=\gamma_{j}.$ When $\MM$ is ergodic (or mixing), the same is true for $\h \MM$.

Given $\gamma \in {\mathcal X}$, denote by $f_\gamma$ the injective map which is induced by $f$
on some neighbourhood of the graph
 $\Gamma_\gamma$ and by $f^{-1}_{\gamma}$ the inverse branch of $f_\gamma$,
 which is defined on some neighbourhood of $\Gamma_{{\mathcal F}(\gamma)}$.
 Given $\widehat \gamma \in \widehat{\mathcal X}$ and  $n\in \N$ we thus define
 the iterated inverse branch $f^{-n}_{\widehat \gamma}$ of $f$ along
 $\widehat \gamma$ and of depth $n$ by
 \[
 f^{-n}_{\widehat \gamma}:=f^{-1}_{\gamma_{-n}}\circ\cdots \circ f^{-1}_{\gamma_{-2}}\circ f^{-1}_{\gamma_{-1}}.
\]
Given $\eta>0$, we shall denote by $T_{M_0}(\gamma_0,\eta)$ the tubular neighbourhood
$$T_{M_0}(\gamma_0,\eta):=\{(\lam,z)\in M_0 \times\C^k\;\colon\; d (z,\gamma_0(\lam)) <\eta\}.$$

\begin{prop}\label{LemSF} 
 Let $f:\Uu \to \Vv$ be a holomorphic family of  polynomial-like maps ovr $M$
 of large topological degree $d\ge 2$ which admits an acritical
  and ergodic
equilibrium  web $\mathcal M$.
   Then (up to an iterate) there exist
a Borel subset $\widehat{\mathcal Y} \subset \widehat{\mathcal X}$
such that $\widehat{\mathcal M} (\widehat{\mathcal Y} )=1$, a measurable function 
$\widehat{\eta}
: \widehat{\mathcal Y}\to ]0,1]$ and a constant $A>0$ which satisfy the following properties.

For every $\widehat{\gamma}\in \widehat{\mathcal Y}$ and every
$n\in\N$
the iterated inverse branch  $f^{-n}_{\widehat \gamma}$ is defined on the tubular neighbourhood
$T_{U_0}(\gamma_0,\widehat{\eta}_p (\widehat \gamma))$ of $\Gamma_{\gamma_0}\cap (U_0\times\C^k)$ and 
$$f^{-n}_{\widehat \gamma}\left(T_{U_0}(\gamma_0,\widehat{\eta}_p (\widehat \gamma))\right)\subset T_{U_0}(\gamma_{-n},e^{-nA}).$$
Moreover, the map $f^{-n}_{\widehat \gamma}$ is Lipschitz
with $\Lip\;f_{\widehat{\gamma}}^{-n} \le \widehat{l}_p (\widehat{\gamma}) e^{-nA}$ where $\widehat{l}_p(\widehat{\gamma}) \ge 1$.
\end{prop}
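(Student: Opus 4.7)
The plan is to prove Proposition \ref{LemSF} by combining a parametric Pesin-theoretic analysis of the cocycle of differentials along backward histories with the uniform positivity of Lyapunov exponents coming from the large topological degree hypothesis. Set $\chi:=\frac{1}{2}\log(d_t/d^*_{k-1})>0$; by the consequence of Theorem \ref{teo_equidistribution} recalled in Section \ref{section_pl_large_top_degree}, every Lyapunov exponent of every $f_\lam$ with respect to $\mu_\lam$ is bounded below by $\chi$, uniformly in $\lam$. I would fix a constant $A\in(0,\chi)$ and pass to an iterate $f^N$ to absorb the subexponential tempered-distortion fluctuations that are inherent to Pesin charts.

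First I would exploit the acriticality of $\Mm$: by $\Ff$-invariance and Definition \ref{defi_acritical}, on a $\widehat{\Mm}$-full measure Borel set $\widehat{\mathcal Y}_0$ the graph of every $\gamma_{-n}$ in a history $\widehat{\gamma}=(\dots,\gamma_{-1},\gamma_0,\gamma_1,\dots)$ avoids the grand orbit of $C_f$. Hence for $\widehat{\gamma}\in\widehat{\mathcal Y}_0$ and every $\lam\in M$ the differentials $df_\lam(\gamma_{-n}(\lam))$ are invertible, so the inverse branches $f^{-n}_{\widehat{\gamma}}$ make sense as germs near $\Gamma_{\gamma_0}$. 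At a fixed base parameter $\lam_0$ I would then apply Oseledets' theorem to the multiplicative cocycle over $(\widehat{\XX},\widehat{\Ff},\widehat{\Mm})$ defined by $\widehat{\gamma}\mapsto df_{\lam_0}(\gamma_{-1}(\lam_0))$; since $(p_{\lam_0})_*\Mm=\mu_{\lam_0}$, its Lyapunov spectrum coincides with that of $\mu_{\lam_0}$ and is therefore bounded below by $\chi$. Standard Pesin theory, combined with ergodicity of $\widehat{\Mm}$ inherited from that of $\Mm$, then yields a measurable Pesin radius $\widehat{r}(\widehat{\gamma})>0$ such that the fiberwise inverse branches of $f_{\lam_0}^{n}$ are defined on the ball of radius $\widehat{r}(\widehat{\gamma})$ around $\gamma_0(\lam_0)$, contract at rate $e^{-nA}$ with a Lipschitz constant $\widehat{l}(\widehat{\gamma})e^{-nA}$, and map into a ball of comparable small radius around $\gamma_{-n}(\lam_0)$.

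The heart of the argument is then to promote these fiberwise estimates at $\lam_0$ to the uniform tubular bounds required by the statement. For this I would use that the histories $\widehat{\gamma}$ take values in $\Oo(M,\C^k,\Uu)$, which by Montel's theorem is relatively compact, so on any fixed $M_0\Subset M$ the family $\{\gamma_{-n}\}_{n\geq 0}$ is uniformly bounded and uniformly Lipschitz in $\lam$. Since $f$ is fibered and the Lyapunov lower bound $\chi$ does not depend on $\lam$, a Koebe/distortion-type argument applied to the fibered holomorphic map $f\colon\Uu\to\Vv$ allows one to inflate the Pesin ball at $\lam_0$ to a tubular neighbourhood $T_{M_0}(\gamma_0,\widehat{\eta}(\widehat{\gamma}))$ of slightly smaller radius, on which $f^{-n}_{\widehat{\gamma}}$ is defined, maps into $T_{M_0}(\gamma_{-n},e^{-nA})$, and has Lipschitz constant at most $\widehat{l}(\widehat{\gamma})e^{-nA}$. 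Measurability of $\widehat{\eta}$ and $\widehat{l}$, together with the extraction of the required full-measure Borel set $\widehat{\mathcal Y}\subset\widehat{\mathcal Y}_0$, would then follow from a standard Egorov-type argument applied to the Pesin tempered functions.

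The main obstacle, to my mind, is precisely this uniformisation in $\lam$: the Pesin estimates are a priori tied to the single dynamics $f_{\lam_0}$, and one must ensure that the radius of validity of the inverse branches does not collapse as $\lam$ varies over $M_0$. The key ingredients that unlock this are, on the one hand, the $\lam$-independence of the Lyapunov lower bound $\chi$, and on the other, the fact that every $\gamma\in\Supp\Mm$ is a genuine holomorphic section of $\Uu\to M$, which provides the Montel compactness and distortion control needed to convert pointwise-in-$\lam$ Pesin radii into a genuine tube. Passing to $f^N$ finally absorbs the slowly-varying tempered factors so that the contraction rate becomes a single uniform constant $A>0$.
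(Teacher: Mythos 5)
Your overall architecture is the right one and matches the argument this paper delegates to \cite{bbd2015} and \cite{tesi}: acriticality of $\Mm$ to keep the graphs $\Gamma_{\gamma_{-n}}$ off the critical set, the uniform lower bound $\frac{1}{2}\log(d_t/d^*_{k-1})$ on all Lyapunov exponents coming from the large topological degree hypothesis, the natural extension to organise backward histories, and a passage to an iterate to absorb tempered fluctuations. However, the step you yourself single out as ``the main obstacle'' --- promoting the fiberwise Pesin estimates at a single parameter $\lam_0$ to the tubular estimates over $M_0$ --- is not resolved by what you propose. There is no Koebe distortion theorem for maps of $\C^k$, $k\geq 2$, and Montel compactness of the family $\{\gamma_{-n}\}$ gives no quantitative control, at parameters $\lam\neq\lam_0$, either on $\norm{\pa{df_\lam^{\,n}(\gamma_{-n}(\lam))}^{-1}}$ or on how fast $\mathrm{dist}(\gamma_{-n}(\lam),C_{f_\lam})$ may decay along the history. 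A backward orbit can approach the critical set at some $\lam$ while staying far from it at $\lam_0$, and then the domain of definition of the inverse branch at $\lam$ collapses even though the Pesin radius at $\lam_0$ does not. So the inflation of a Pesin ball at $\lam_0$ into a tube is precisely the point that still needs a proof.

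The mechanism that actually closes this gap (and which is the content of the proof in \cite{bbd2015}) is to run the ergodic-theoretic argument directly on parametrized quantities rather than at a fixed fibre. Since $\Gamma_\gamma$ avoids $C_f$ for $\gamma\in\XX$, the entries of $\pa{df_\lam(\gamma(\lam))}^{-1}$ are holomorphic in $\lam$, so $\lam\mapsto\log\norm{\pa{df_\lam(\gamma(\lam))}^{-1}}$ is plurisubharmonic; the submean value property then bounds $\sup_{\lam\in M_0}$ of such functions by their averages over a slightly larger compact set, and this, combined with the finiteness of $\lla\mu_\lam,\log\abs{\jac}\rra$ (Theorem \ref{teo_log_J_mu}) and the fact that the equilibrium measure of a map of large topological degree integrates psh functions, yields the $\Mm$-integrability of $\gamma\mapsto\sup_{\lam\in M_0}\log\norm{\pa{df_\lam(\gamma(\lam))}^{-1}}$ and of the analogous $-\log\mathrm{dist}$ to the critical set. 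Applying Birkhoff's theorem on the natural extension to these integrable parametrized observables produces the measurable tube radius $\widehat{\eta}$ and the Lipschitz bound $\widehat{l}\,e^{-nA}$ directly in the product space, with no need to extend from a single fibre. Without this psh-in-$\lam$ integrability step your proposal does not yield the tubular statement, only its slice at $\lam_0$.
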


Notice that this is essentially a local statement on  the parameter space.
The assumption on the family to be of large topological degree
is crucial here to ensure that all Lyapunov exponents are positive.
Once Proposition \ref{LemSF}
is established, the implication \ref{item_mjs}$\Rightarrow$\ref{item_laminar} follows
by an application of Poincar\'e recurrence theorem.
Moreover,
the uniqueness of the equilibrium web and its mixing behaviour also easily
follow.

The fact that either the asymptotic motion of the repelling cycles or the existence of an equilibrium lamination imply
the existence of an ergodic 
acritical web is again proved in same exact way than on $\P^k$.

\begin{prop}\label{prop_graph_then_acritical}
Let $f\colon \Uu \to \Vv$ be a holomorphic family of polynomial-like maps of
large topological degree $d_t \geq 2$.
Assume that one of the following holds:
\begin{enumerate}
\item asymptotically all repelling $J$-cycles
move holomorphically, or
\item there exists a holomorphic map $\gamma\in\omcv$
such that $\Gamma_\gamma$ does not intersect the postcritical set of $f$.
\end{enumerate}
Then $f$ admits an ergodic acritical equilibrium web.
\end{prop}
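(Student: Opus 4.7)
The plan is to produce an equilibrium web via Theorem~\ref{teo_construct_web} under each hypothesis, verify that it is acritical by propagating a vanishing property from the approximating measures to the limit, and then extract an ergodic acritical component from the ergodic decomposition.

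A preliminary reduction lightens the acriticality condition. Setting $\Jj_{s,0}:=\{\sigma\in\Jj\colon \Gamma_\sigma\cap\bigcup_{n\geq 0} f^n(C_f)\neq\emptyset\}$, observe that $\Jj_{s,0}$ is $\Ff$-forward-invariant (if $\gamma(\lambda_0)=f^N_{\lambda_0}(c)$ then $\Ff\gamma(\lambda_0)=f^{N+1}_{\lambda_0}(c)$), so $\Jj_s=\bigcup_{m\geq 0}\Ff^{-m}(\Jj_{s,0})$ is an increasing union. By the $\Ff$-invariance of any equilibrium web $\Mm$ and monotone convergence, $\Mm(\Jj_s)=\lim_m\Mm(\Ff^{-m}(\Jj_{s,0}))=\Mm(\Jj_{s,0})$, so it suffices to show $\Mm(\Jj_{s,0})=0$.

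Under hypothesis~(2) I would take $\Mm=\lim\Mm_n$ with $\Mm_n=\tfrac{1}{n}\sum_{l=1}^n d_t^{-l}\sum_{\Ff^l\sigma=\gamma}\delta_\sigma$ from Theorem~\ref{teo_construct_web}(2), and use the following algebraic fact: if $\Ff^l\sigma=\gamma$ and $\sigma(\lambda_0)\in f^N_{\lambda_0}(C_f)$ then $\gamma(\lambda_0)=f^{l+N}_{\lambda_0}(c)$, contradicting the hypothesis that $\Gamma_\gamma$ avoids the postcritical set. Hence each $\Mm_n$ is supported in $\Jj\setminus\Jj_{s,0}$, and one must propagate this vanishing to the limit. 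Decomposing $\Jj_{s,0}=\bigcup_N\Jj_{s,0}^N$ with $\Jj_{s,0}^N:=\{\sigma\colon\Gamma_\sigma\cap f^N(C_f)\neq\emptyset\}$ and choosing a psh potential $u_N\leq 0$ on $\Vv$ whose pole locus is $f^N(C_f)$, I would apply Lemma~\ref{lemma_intersezioni_tendono}: the convergence $\lla u_N S_n,\pi^*\Omega\rra\to\lla u_N\,\Ee,\pi^*\Omega\rra$ (valid because $\mu_\lambda$ integrates psh functions for large topological degree families, \cite[Theorem 2.34]{ds_cime}) combined with the fact that the support of $\Mm_n$ is disjoint from $f^N(C_f)$ forces $\Mm(\Jj_{s,0}^N)=0$; a countable union gives $\Mm(\Jj_{s,0})=0$. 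Under hypothesis~(1) the same slicing scheme applies, starting from the observation that, by Theorem~\ref{teo_equidistribution}(1) and $\mu_\lambda(f_\lambda^N(C_f))=0$, at each fixed $\lambda$ only $o(d_t^n)$ of the cycles $\rho_{n,j}$ hit $f_\lambda^N(C_f)$.

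Finally I would extract an ergodic component by decomposing $\Mm=\int\Mm_x\,d\nu(x)$ into ergodic $\Ff$-invariant probability measures. Since $p_\lambda\colon(\Jj,\Ff)\to (V_\lambda,f_\lambda)$ is a factor map, each $(p_\lambda)_*\Mm_x$ is an $f_\lambda$-ergodic probability measure, and they average to $\mu_\lambda$; the uniqueness of the ergodic decomposition of the ergodic measure $\mu_\lambda$ then forces $(p_\lambda)_*\Mm_x=\mu_\lambda$ for $\nu$-a.e. $x$. Applying this at a countable dense sequence of parameters and using continuity in $\lambda$ on both sides gives $(p_\lambda)_*\Mm_x=\mu_\lambda$ for every $\lambda$ and $\nu$-a.e.~$x$, so $\Mm_x$ is an equilibrium web. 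Since $\int\Mm_x(\Jj_s)\,d\nu(x)=\Mm(\Jj_s)=0$, $\Mm_x$ is also acritical for $\nu$-a.e.~$x$, yielding the desired ergodic acritical equilibrium web. I expect the main obstacle to be the limit step for $\Mm(\Jj_{s,0}^N)=0$: the set $\Jj_{s,0}^N$ is not open, so $\Mm_n(\Jj_{s,0}^N)=0$ (or its asymptotic version under (1)) does not pass to $\Mm$ by weak convergence alone, and the resolution is precisely the slicing and psh machinery of Section~\ref{section_hol_families}, which replaces the continuous Green potential available in the $\P^k$ setting.
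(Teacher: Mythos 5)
Your overall skeleton (build $\Mm=\lim\Mm_n$ via Theorem \ref{teo_construct_web}, reduce acriticality by $\Ff$-invariance to the forward postcritical set, extract an ergodic component by ergodic decomposition and uniqueness of the decomposition of the ergodic measure $\mu_\lambda$) is the right one, and the reduction to $\Jj_{s,0}$ and the ergodicity step are fine. The gap is exactly where you suspect it, and the mechanism you propose does not close it. First, Lemma \ref{lemma_intersezioni_tendono} concerns the currents $S_n=d_t^{-n}(f^n)^*\theta$ obtained from a smooth measure $\theta$, not the currents $W_{\Mm_n}$ attached to your approximating webs; you would need a separate argument to transfer it. Second, and more seriously, even the correct conclusion of that machinery is too weak: finiteness of $\lla u_N W_{\Mm},\pi^*(\Omega)\rra$ for a psh $u_N$ with polar set $f^N(C_f)$ only says that the trace measure of $W_\Mm\wedge\pi^*(\Omega)$ gives no mass to $f^N(C_f)$, and for a graph $\Gamma_\gamma$ meeting $f^N(C_f)$ in a proper analytic subset that trace contribution is already zero. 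So the slicing argument can only rule out graphs \emph{contained} in $f^N(C_f)$, whereas $\Jj_{s,0}^N$ consists of graphs merely \emph{intersecting} it. As written, nothing in your proof excludes a limit web charging graphs that cross the postcritical set transversally.

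The missing ingredient is Hurwitz's theorem. Every $\gamma\in\Supp\Mm$ is a local uniform limit of elements of $\bigcup_n\Supp\Mm_n$, and under hypothesis (2) each of these avoids the hypersurface $f^N(C_f)$ (your algebraic observation); writing $f^N(C_f)$ locally as $\{h=0\}$ and applying Hurwitz to $h(\lambda,\gamma_{n_k}(\lambda))$, one gets that $\Gamma_\gamma$ either avoids $f^N(C_f)$ or is contained in it. Hence $\Mm(\Jj_{s,0}^N)\leq \Mm\pa{\set{\gamma\colon \Gamma_\gamma\subset f^N(C_f)}}\leq (p_{\lambda_0})_*\Mm\pa{f^N(C)_{\lambda_0}}=\mu_{\lambda_0}\pa{f^N(C)_{\lambda_0}}=0$, the last equality because the equilibrium measure of a map of large topological degree integrates psh functions and so does not charge pluripolar sets; no slicing is needed at all. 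This is the route the paper intends (``the equilibrium measure cannot charge the postcritical set'' plus a web whose approximants avoid the critical set). Note also that under hypothesis (1) your sketch is looser still: the webs $d_t^{-n}\sum_j\delta_{\rho_{n,j}}$ are \emph{not} supported away from the postcritical set, only asymptotically most of their mass is, so Hurwitz does not apply directly; the paper's route is rather to pass through condition \ref{item_support} of Theorem \ref{teo_equiv}, deduce the absence of Misiurewicz parameters and hence the existence of a graph avoiding the postcritical set, reducing to the situation of hypothesis (2).
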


The important points in the proof are the following:
the equilibrium measure cannot charge
the postcritical set
(Theorem \ref{teo_log_J_mu})
and
we can build an equilibrium web (by means of
Theorem \ref{teo_construct_web})
satisfying the assumptions of Theorem \ref{teo_equiv}\eqref{item_support}
(which implies that the family has no Misiurewicz parameters).

We are thus left to prove that the two (equivalent) conditions \ref{item_laminar} and \ref{item_mjs}
imply the asymptotic motion of the repelling points.
We stress
that, in order to do this,
we do not need to make any further assumption on the family we are considering.
We start noticing that just the existence of any equilibrium web implies
the existence of a set $\Pp\subset \Jj$
satisfying all the properties required by Definition \ref{defi_rep_asymptotically_move_hol}
but the last one. This is an immediate consequence of Lemma \ref{lemma_property_web}.

\begin{lemma}\label{lemma_quasi_asintotic_motion}
 Let $f$ be a holomorphic family of polynomial-like maps of large topologicald degree $d_t$.
 Assume that there exists an equilibrium web $\Mm$ for $f$. Then there exists a subset $\Pp = \cup_n \Pp_n \subset \Jj$
 such that
 \begin{enumerate}
  \item $\Card \Pp_n = d^n_t + o(d^n_t)$;
  \item every element in $\Pp_n$ is $n$-periodic;
\item we have
 $\sum_{\gamma\in \Pp_n} \delta_{\gamma} \to \Mm'$,
 where $\Mm'$ is a (possibly different) equilibrium web.
  \end{enumerate}
\end{lemma}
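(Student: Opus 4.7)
The plan is to build $\Pp_n$ by pulling back the repelling $n$-periodic points of $f_{\lambda_0}$ to elements of $\Supp\Mm$, for a fixed basepoint $\lambda_0 \in M$. Let $R_n$ denote the set of repelling $n$-periodic points of $f_{\lambda_0}$ in $J_{\lambda_0}$; the first part of Theorem \ref{teo_equidistribution} gives $\Card R_n = d_t^n + o(d_t^n)$ and $\frac{1}{d_t^n}\sum_{a \in R_n} \delta_a \to \mu_{\lambda_0}$. Item \ref{lemma_property_web_item_2} of Lemma \ref{lemma_property_web} then assigns to each $a \in R_n$ a unique element $\gamma_a \in \Supp\Mm$ which is $n$-periodic under $\Ff$ and satisfies $\gamma_a(\lambda_0)=a$. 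Setting $\Pp_n := \{\gamma_a \colon a \in R_n\}$, conditions (1) and (2) of the statement are immediate.

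For condition (3), I consider the probability measures $\Mm_n := \frac{1}{\Card \Pp_n}\sum_{\gamma \in \Pp_n} \delta_\gamma$ on the compact space $\Jj$ and extract a subsequential weak limit $\Mm'$. The invariance $\Ff_* \Mm' = \Mm'$ follows from the fact that $\Ff$ permutes $\Pp_n$: if $\gamma_a \in \Pp_n$, then $\Ff\gamma_a$ is an $n$-periodic element of $\Supp\Mm$ taking the value $f_{\lambda_0}(a) \in R_n$ at $\lambda_0$, hence $\Ff\gamma_a = \gamma_{f_{\lambda_0}(a)} \in \Pp_n$ by uniqueness. The identity $(p_{\lambda_0})_* \Mm' = \mu_{\lambda_0}$ is then immediate from the construction and the first part of Theorem \ref{teo_equidistribution}.

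For a general $\lambda \ne \lambda_0$ I symmetrize the construction: let $R_n^\lambda$ be the set of repelling $n$-periodic points of $f_\lambda$ in $J_\lambda$, and for each $b \in R_n^\lambda$ let $\gamma^b \in \Supp\Mm$ be the unique $n$-periodic lift with $\gamma^b(\lambda)=b$ provided by Lemma \ref{lemma_property_web}. The uniqueness clauses force the maps $a \mapsto \gamma_a$ and $b \mapsto \gamma^b$ to be inverse to each other on the subsets
\[
A := \{a \in R_n \colon \gamma_a(\lambda) \in R_n^\lambda\}
\quad\text{and}\quad
B := \{b \in R_n^\lambda \colon \gamma^b(\lambda_0) \in R_n\},
\]
so that $a \mapsto \gamma_a(\lambda)$ is a bijection $A \to B$. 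The total number of $n$-periodic elements of $\Supp\Mm \cap \Jj$ is at most $d_t^n + o(d_t^n)$ by a Bezout-type count on the $n$-periodic points of $f_{\lambda_0}$; combined with $\Card R_n = \Card R_n^\lambda = d_t^n - o(d_t^n)$ this forces $\Card(R_n \setminus A) = \Card(R_n^\lambda \setminus B) = o(d_t^n)$. Consequently
\[
(p_\lambda)_* \Mm_n
= \frac{1}{\Card \Pp_n}\sum_{b \in B} \delta_b + o(1)
= \frac{1}{\Card \Pp_n}\sum_{b \in R_n^\lambda} \delta_b + o(1)
\longrightarrow \mu_\lambda,
\]
the last convergence being again the first part of Theorem \ref{teo_equidistribution}, now applied to $f_\lambda$.

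The main obstacle is precisely the counting step in the previous paragraph: one must ensure that only an asymptotically negligible proportion of the motions $\gamma_a$ can fail to land in $R_n^\lambda$, and symmetrically, and that the totality of $n$-periodic elements of $\Supp\Mm \cap \Jj$ does not exceed $d_t^n + o(d_t^n)$. This reduces to controlling the number of non-repelling $n$-periodic points in the Julia set by comparing the sharp lower bound $\Card R_n^\lambda \ge d_t^n - o(d_t^n)$ coming from equidistribution with the a priori upper bound on the total count of $n$-periodic points of $f_\lambda$.
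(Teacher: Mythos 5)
Your construction of $\Pp_n$ and your verification of items (1)--(2), as well as of the $\Ff$-invariance of the limit and of the identity $(p_{\lam_0})_*\Mm'=\mu_{\lam_0}$, coincide with the paper's: the paper also fixes $\lam_0$, takes the $d_t^n+o(d_t^n)$ repelling $n$-periodic points in $J_{\lam_0}$ given by Theorem \ref{teo_equidistribution}, and lifts each one to an $n$-periodic element of $\Supp\Mm$ via Lemma \ref{lemma_property_web}\eqref{lemma_property_web_item_2}. For item (3), however, the paper does not argue directly at an arbitrary parameter: it simply invokes Theorem \ref{teo_construct_web} (whose proof is deferred to the references), whereas you attempt a self-contained equidistribution argument at every $\lam$.

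That attempt has a genuine gap, precisely at the step you flag. To obtain $\Card(R_n\setminus A)=o(d_t^n)$ you use the claim that the set of $n$-periodic elements of $\Supp\Mm\cap\Jj$ has cardinality at most $d_t^n+o(d_t^n)$ ``by a Bezout-type count''. But such a count bounds the number of $n$-periodic \emph{points} of $f_{\lam_0}$, not the number of $n$-periodic \emph{graphs} in $\Supp\Mm$: to transfer the bound you need the evaluation $\gamma\mapsto\gamma(\lam_0)$ to be injective on this set, and Lemma \ref{lemma_property_web}\eqref{lemma_property_web_item_2} gives injectivity only above \emph{repelling} points. Nothing in the hypotheses prevents a single non-repelling $n$-periodic point of $f_{\lam_0}$ (or of $f_\lam$) from carrying a large family of distinct $n$-periodic elements of $\Supp\Mm$; for the same reason your closing remark that the obstacle ``reduces to controlling the number of non-repelling $n$-periodic points in the Julia set'' is not sufficient --- even knowing there are only $o(d_t^n)$ such points does not bound $\Card(R_n\setminus A)$, because $a\mapsto\gamma_a(\lam)$ need not be injective on $R_n\setminus A$. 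Equivalently, the measures $(p_\lam)_*\Mm_n$ could a priori concentrate a non-negligible proportion of their mass on the $o(d_t^n)$ non-repelling periodic points of $f_\lam$, and excluding this is exactly the content that the paper outsources to Theorem \ref{teo_construct_web}. As written, your proof of item (3) at parameters $\lam\neq\lam_0$ is therefore incomplete.
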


Notice that, if the equilibrium web $\Mm$ in the statement
is acritical, by the uniqueness recalled above
we have $\Mm = \Mm'$.

\begin{proof}
Let us fix $\lam_0$
in the parameter space. Since $f_{\lam_0}$
has large topological degree, Theorem \ref{teo_equidistribution}
gives $d_t^n + o(d^n_t)$
repelling periodic points for $f_{\lam_0}$ contained in the Julia set $J_{\lam_0}$.
By Lemma \ref{lemma_property_web}\eqref{lemma_property_web_item_2}, for every such point $p$ of period $n$
there exists
an element $\gamma_p \in \Jj$ such that $\gamma_p (\lam_0) = p$ and $\Ff^n (\gamma_p) = \gamma_p$.
This gives the first two assertions of the statement. The last one follows by Theorem \ref{teo_construct_web}.
\end{proof}

In order to recover the asymptotic
motion of the repelling cycles as in Definition \ref{defi_rep_asymptotically_move_hol}, we thus just need
to prove that, on any $M'\Subset M$, \emph{asymptotically all} $\gamma_p \in \Pp_n$ given by Lemma
\ref{lemma_quasi_asintotic_motion}
are repelling. This will be
done by means of the 
following 
general lemma, which allows us to recover the existence of repelling points for a dynamical
system from the information
about backward contraction of balls along negative orbits. This can be seen as a generalization
of a classical strategy
\cite{BriendDuval1} (see also \cite{berteloot2010lyapunov}).
We keep the notations introduced
before Proposition \ref{LemSF}
regarding the natural extension of a dynamical system and the inverse branches along negative orbits.

\begin{lemma}\label{lemma_bd_metric}
 Let  $\Ff: \Kk\to \Kk$ be a
 continuous map  from a compact metric space $\Kk$ to itself.
 Assume that, for every $n$, the number of periodic repelling points of period dividing $n$
 is less than $d^n + o(d^n)$ for some integer $d\geq 2$.
 Let $\nu$ be a probability measure on $\Kk$ which is invariant, mixing  and of constant Jacobian $d$ for $\Ff$.
 
 Suppose that there exists an $\Ff$-invariant subset $\Ll\subset \Kk$ such that $\nu(\Ll)=1$ and $\Ff: \Ll \to \Ll$
 is a covering of degree $d$.
 Assume moreover that the natural extension $\pa{\widehat \Ll, \widehat \Ff, \widehat \nu}$
 of the induced system $(\Ll,\Ff,\nu)$ satisfies the following properties.
\begin{itemize}
\item[(P1)] \mbox{For every } $\h x\in \widehat \Ll
\mbox{ the inverse branch }
\Ff_{\h x}^{-n} \mbox{is defined and Lipschitz on the open ball }\\ B(x_0, \eta(\h x))
\mbox{with } \Lip \pa{\Ff_{\h x}^{-n}}\leq l(\h x) e^{-nL}, \mbox{for some positive measurable
functions } \eta \mbox{ and } l\\ \mbox{ and some positive constant } L.$
\item[(P2)]\label{item_rep_disj}
 $\forall x_0 \in \Ll, \forall N, \forall \mbox{ closed } C \subset B(x_0, \frac{1}{N})\colon \mbox{ the preimages }
 \Ff^{-n}_{\h x} (C) \mbox{ with } \eta (\h x) >\frac{1}{N} \mbox{ are disjoint}\\
  \mbox{for } n \mbox{ large enough}.$
\end{itemize}
Then,
\[
 \frac{1}{d^n} \sum_{p \in R_n} \delta_p \to \nu
\]
where $R_n$ is the set of all repelling periodic points of period (dividing) $n$.
\end{lemma}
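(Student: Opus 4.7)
My plan is to adapt the classical Briend-Duval strategy for equidistribution of repelling periodic points to this abstract metric setting. The inverse-branch control in (P1) produces, along each sufficiently ``good'' backward orbit, a strict contraction whose unique fixed point is a repelling periodic point; the disjointness condition (P2) guarantees that distinct branches yield distinct points; and the mixing of $\widehat{\nu}$ provides the count needed to match, together with the assumed upper bound $\#R_n \le d^n + o(d^n)$, the target density $\nu$.

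I would first apply Lusin's theorem to the positive measurable functions $\eta,l$: for any $\epsilon > 0$, there are a compact set $\widehat A \subset \widehat \Ll$ with $\widehat \nu(\widehat A) > 1 - \epsilon$ and constants $\eta_0 > 0$, $l_0 < \infty$ such that $\eta \ge \eta_0$ and $l \le l_0$ on $\widehat A$. Then, for a ball $B = B(x_*, r) \subset \Ll$ centered in $\Supp \nu$ with $r < \eta_0/2$ and $\nu(\partial B) = 0$, and $n$ large enough that $l_0 e^{-nL} < 1/4$, mixing of $\widehat \Ff$ under $\widehat \nu$ applied to the sets $\widehat A \cap \pi_0^{-1}(B)$ and $\widehat \Ff^n(\pi_0^{-1}(B)) = \{\hat x : x_{-n} \in B\}$ yields
\[
\widehat \nu\bigl(\widehat A \cap \pi_0^{-1}(B) \cap \widehat \Ff^n(\pi_0^{-1}(B))\bigr) \xrightarrow[n \to \infty]{} \widehat \nu\bigl(\widehat A \cap \pi_0^{-1}(B)\bigr) \cdot \nu(B) \ge (\nu(B) - \epsilon)\,\nu(B).
\]
A history $\hat x$ in this event satisfies $x_0, x_{-n} \in B$ and $\hat x \in \widehat A$, so $\Ff^{-n}_{\hat x}$ is defined and strictly contracting on $B(x_0, \eta_0) \supset B$. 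Using the constant-Jacobian hypothesis -- the disintegration of $\widehat \nu$ over $\pi_0$ assigns uniform weight $d^{-n}$ to the $d^n$ preimages under $\Ff^n$ -- together with a finite-coordinate approximation of $\widehat A$, this measure estimate translates into the count: for most $x_0 \in B$, at least $(\nu(B) - 2\epsilon) d^n$ preimages $y \in \Ff^{-n}(x_0) \cap B$ admit an inverse branch controlled as in (P1).

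For each such triple, the image of $B(x_0, \eta_0)$ under $\Ff^{-n}_{\hat x}$ lies in a ball of radius $\eta_0 l_0 e^{-nL}$ around $y \in B \subset B(x_0, \eta_0/2)$, hence inside $B(x_0, \eta_0)$; the Banach fixed-point theorem produces a unique $p \in B(x_0, \eta_0)$ with $\Ff^{-n}_{\hat x}(p) = p$, which is a repelling point of period dividing $n$ for $\Ff$. Property (P2), applied to small closed balls around $x_0$, makes these fixed points distinct. This gives at least $(\nu(B) - 2\epsilon)\,\nu(B)\, d^n$ distinct elements of $R_n$ asymptotically localized near $B$. Applied to a fine partition of $\Supp \nu$ into balls $B_i$ with $\nu(\partial B_i) = 0$, summed, and combined with the assumption $\#R_n \le d^n + o(d^n)$, this forces $d^{-n} \#(R_n \cap B_i) \to \nu(B_i)$ upon letting $\epsilon \to 0$, which is the desired weak-$*$ convergence.

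The main technical obstacle I expect is the conversion of the mixing estimate -- which controls the $\widehat \nu$-measure of an event depending on the whole bi-infinite history -- into a genuine count of distinct finite inverse branches. This will require approximating $\widehat A$ by events depending only on finitely many coordinates $(x_{-n},\ldots,x_0)$, together with a careful use of the constant-Jacobian property to read off uniform weights $d^{-n}$ on preimages. A secondary (minor) point is the geometric compatibility of ball sizes and contraction rates, ensuring the fixed points stay genuinely inside $B$ so that the partition argument yields the correct count; this will follow from $l_0 e^{-nL} \to 0$.
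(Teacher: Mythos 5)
Your strategy is the right one and is essentially the paper's: both run the Briend--Duval scheme through the natural extension (truncate $\eta$ and $l$ to a good set $\widehat A$ of large $\widehat{\nu}$-measure, use mixing to produce backward orbits returning to $B$, contract via (P1) and Banach's fixed point theorem to get repelling periodic points, separate them with (P2), and close with the a priori bound $\Card R_n\le d^n+o(d^n)$). The paper merely packages the mixing step dually, as the chain $\widehat{\nu}\bigl(\widehat\Ff^{-n}(\widehat C_\delta\cap\widehat\Ll_N)\cap\widehat C\bigr)\le d^{-n}\,(\Card S_n^\delta)\,\nu(C_\delta)\le \sigma_n(C_\delta)\,\nu(C_\delta)$, but the content is the same.

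There is, however, one step that fails as written: the final count of ``at least $(\nu(B)-2\epsilon)\,\nu(B)\,d^n$ distinct elements of $R_n$ near $B$''. The extra factor $\nu(B)$ is fatal for your concluding partition argument: the resulting lower bounds $\nu(B_i)^2$ sum to something far below $1$ over a fine partition (tending to $0$ when $\nu$ is non-atomic), so the comparison with $\Card R_n\le d^n+o(d^n)$ forces nothing. The correct count is the one your own intermediate step already delivers: the constant-Jacobian disintegration converts the mixing estimate into the existence, for each large $n$ (by a Chebyshev argument, which is the precise content of ``for most $x_0$''), of at least one $x_0\in B$ having at least $(\nu(B)-2\epsilon)d^n$ good preimages in $B$; a single such $x_0$ already yields $(\nu(B)-2\epsilon)d^n$ distinct repelling points near $B$, and one must not multiply by $\nu(B)$ again --- different base points do not create new periodic points, and averaging over $x_0\in B$ is exactly how the spurious factor reappears. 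Two smaller points to tighten: the chosen $x_0$ depends on $n$, while the ``for $n$ large enough'' in (P2) is stated at fixed $x_0$, so you should fix the base point once before letting $n\to\infty$ (as the paper does with $\widehat a\in\widehat\Ll_N$) or work along a subsequence with a fixed $x_0$; and since the fixed points only lie within $O(e^{-nL})$ of $B$, what you obtain is $\widetilde\sigma(\overline B)\ge\nu(B)$ for any limit measure $\widetilde\sigma$, which combined with $\widetilde\sigma(\Kk)\le1$ does give $\widetilde\sigma=\nu$.
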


By \emph{repelling periodic point} here we mean the following: a point $x_0$ such that, for some $n$,
$\Ff^n (x_0)=x_0$ and there exists a local inverse branch $\Hh$
for $\Ff^n$ sending $x_0$ to $x_0$ and such that $\Lip  \Hh_{x_0}<1$.

\begin{proof}

We let $\t \sigma$ be any limit value of the sequence $\sigma_n:= \frac{1}{d^n} \sum_{p \in R_n} \delta_p$. Remark that
\begin{equation}\label{eq_measure_all}
\t \sigma (\Kk) \leq \limsup_{n\to\infty} \sigma_n (\Kk) \leq \lim_{n\to\infty}\frac{d^n+o(d^n)}{d^n} =1.
\end{equation}
 For every $N\in \N$, let $\widehat \Ll_N \subset \widehat \Ll$ be defined as
 \[
 \widehat  \Ll_N =\left\{\h x \colon \eta (\h x) > \frac{1}{N} \mbox{ and } l(\h x) \leq N\right\}
 \]
 and set $\h \nu_N:= 1_{\widehat \Ll_N} \h \nu$ and $\nu_N = \pa{\pi_0}_* \h \nu_N$.
 We also set $\Ll_N := \pi_0 \pa{ \widehat \Ll_N}$.
 We are going to prove that
 \begin{equation}\label{eq_measure_greater}
  \t \sigma (A) \geq \nu_N (A) \mbox{ for every borelian } A, \quad \forall N\in \N.
 \end{equation}
 As by hypothesis $\nu_N (A)\to \nu (A)$ as $N\to\infty$, the assertion will then
 follow from \eqref{eq_measure_all} and \eqref{eq_measure_greater}.
 So we turn to prove \eqref{eq_measure_greater}. In order to do this, it
 suffices to prove the following:
 \begin{equation}\label{eq_measure_greater_N}
  \forall N\in \N, \forall \h a \in \widehat \Ll_N, \forall \mbox{ closed } C
  \subset B \pa{a_0, \frac{1}{2N}} \colon \t \sigma (C) \geq \nu_N (C).
 \end{equation}
 Indeed, given any Borelian subset $A\subset \Kk$, since $\Kk$ is compact we can find a partition of $A \cap \Ll_N$
 into finite borelian sets $A_i$, each of which contained in an open
 ball $B\pa{a^i_0, \frac{1}{3N}}$, with $\h a^i \in \Ll_N$.
 The assertion thus follows from \eqref{eq_measure_greater_N}
 since, for every $A_i$, the values $\t \sigma (A_i)$ and $\nu_N (A_i)$ are the
 suprema of the respective measures on closed subsets of
 $A_i$ (which by construction are contained in $B (a^i_0, \frac{1}{2N})$).\\
 
 In the following we thus fix a closed subset $C \subset B(a_0, \frac{1}{2N})$.
 We shall denote by $C_\delta$
 the closed $\delta$-neighbourhood of $C$ (in $\Kk$).
 Take some $\delta$ such that $\delta < \frac{1}{2N}$ and notice that,
 since $\h a \in \widehat \Ll_N$, we have
 $C_\delta \subset B(a_0, \frac{1}{N}) \subset B (a_0, {\eta (\h a)})$.
Then, according to the property (1) of the natural extension $\pa{\widehat \Ll, \widehat \Ff, \widehat \nu}$, we can define the
 set:
 \[
 \widehat R_n^\delta = \left\{\h x \in \widehat C_\delta \cap \widehat \Ll_N \colon x_0 = a_0 \mbox{ and } \Ff_{\h x}^{-n} (C_\delta) \cap C \neq \emptyset\right\}.
 \]
 Let us  denote by $S_n^\delta$ the set of preimages of $C_\delta$
 of the form $\Ff_{\h x}^{-n} (C_\delta)$ with $\h x \in \widehat R_n^\delta$,
 by the property (2) of $\pa{\widehat \Ll, \widehat \Ff, \widehat \nu}$,
 the elements of $S_n^\delta$
 are mutually disjoint for $n\geq \tilde n_0$
 (and of course $\Card\; S_n^\delta \leq d^n$).
 We claim that $\Card\; S_n^\delta$ satisfies the following two estimates:
 \begin{enumerate}
  \item\label{lemma_stima_leq} $\frac{1}{d^n} \Card\; S_n^\delta \leq \sigma_n (C_\delta)$, for $n\geq n_0 \geq \tilde n_0$,
  where $n_0$ depends only on $C$ and $\delta$;
  \item\label{lemma_stima_geq} $\frac{1}{d^n} \Card\; S_n^\delta \, \nu (C_\delta) \geq \h \nu \pa{\widehat \Ff^{-n} \pa{\widehat C_\delta \cap \widehat \Ll_N} \cap \widehat C}$.
 \end{enumerate}

Before proving the estimates (\ref{lemma_stima_leq}) and (\ref{lemma_stima_geq}), let us show how \eqref{eq_measure_greater_N} follows from them.
Combining (\ref{lemma_stima_leq}) and (\ref{lemma_stima_geq}) we get
$
\h \nu \pa{\widehat \Ff^{-n} \pa{\widehat C_\delta \cap \widehat \Ll_N} \cap \widehat C} \leq \nu (C_ \delta) \sigma_n (C_\delta)
$
and, since $\h \nu$ is mixing,
letting $n\to\infty$ on a subsequence such that $\sigma_{n_i} \to \tilde \sigma$
we find
$
\h \nu \pa{\widehat C_\delta \cap \widehat \Ll_N} \h \nu \pa{\widehat C} \leq \nu (C_\delta) \t \sigma (C_\delta).
$
Since the left hand side is equal to $\nu_N \pa{C_\delta} \nu \pa{C}$ (and $C$ is closed),
\eqref{eq_measure_greater_N} follows letting $\delta \to 0$.\\

We are thus left to proving the inequalities (\ref{lemma_stima_leq}) and (\ref{lemma_stima_geq}) above. We shall see that the first one follows
from the Lipschitz estimate on $\Ff_{\h x}^{-n}$, while the second is a consequence of the fact that $\nu$ is of constant Jacobian
(i.e., for every borelian set $A\subset \Kk$ on which $\Ff$ is injective we have
 $d \nu (A) = \nu (\Ff(A))$).

Let us start with (\ref{lemma_stima_leq}). We have to find an integer  $n_0$ such that, for $n\geq n_0$, the neighbourhood $C_\delta$
contains al least $\Card\; S_n^\delta$ repelling periodic points for $\Ff$.
Take any $\h x \in \widehat R_n^\delta$. Since $\widehat R_n^\delta\subset \widehat\Ll_N$, one has $\eta (\h x) \geq \frac{1}{N}$ and $l(\h x)\leq N$. This means that
$\Ff_{\h x}^{-n}$ is well defined on $C_\delta \subset B (a_0, \frac{1}{N})$ and that
$\mbox{Diam } \Ff_{\h x}^{-n} (C_\delta) \leq \frac{1}{N} \Lip \Ff_{\h x}^{-n} \leq \frac{1}{N} N e^{-nL}= e^{-nL}$.
Let us now take  $n_0$ such that $3 e^{-n_0 L}< \delta$.
Since, by definition of $\widehat R_n^\delta$, we have that
$\Ff_{\h x}^{-n}(C_\delta)$ intersects $C $ and $C\subset C_\delta$,  
it follows that $\Ff_{\h x}^{-n}(C_\delta) \subset C_\delta$ for every $\h x \in \h R_n^\delta$, with  $n\geq n_0$.
So, since $C_\delta$ is itself a compact metric space and $\Ff_{\h x}^{-n}$ is stricly contracting on it
(the condition $3 e^{-n_0 L}< \delta < \frac{1}{2N}$
also implies that
$\Lip \Ff_{\h x}^{-n} < 1$ for $n\geq n_0$), we
find a (unique) fixed point for it in $C_\delta$.
Since the elements of $S_n^\delta$
are disjoint, we have found at least $\Card\; S_n^\delta$
periodic points (whose period divides $n$) for $\Ff$ in $C_\delta$,
which must be repelling
by the Lipschitz estimate of the local inverse,
and so (\ref{lemma_stima_leq}) is proved.

For the second inequality,
we have
\[
 \begin{aligned}
\h \nu \pa{\widehat \Ff^{-n} \pa{\widehat C_\delta \cap \widehat \Ll_N} \cap \widehat C}
& \le \nu \pa{\pi \pa{\widehat \Ff^{-n} \pa{\widehat C_\delta \cap \widehat \Ll_N} \cap \widehat C}}
 \leq \nu \pa{ \bigcup_{\h x \in \widehat R_n^\delta} \pa{\Ff_{\h x}^{-n} (C_\delta)} \cap C} \\
\leq \nu \pa{ \bigcup_{\h x \in \widehat R_n^\delta} \pa{\Ff_{\h x}^{-n} (C_\delta)} }
& = \sum_{\Ff_{\h x}^{-n} (C_\delta)\in S_n^\delta} \nu \pa{\Ff_{\h x}^{-n} (C_\delta)}
 = \sum_{\Ff_{\h x}^{-n} (C_\delta)\in S_n^\delta} \frac{1}{d^n} \nu (C_\delta)\\
= \frac{1}{d^n} \left(\Card\; S_n^\delta\right) \nu(C_\delta)
\end{aligned}
\]
where the second  equality
follows from the fact that $\nu$ is of constant Jacobian.
\end{proof}

We can now show how conditions \ref{item_mjs} and \ref{item_laminar}
(which we recall are equivalent)
imply condition \ref{item_cycles_move} in Theorem \ref{teo_equiv_2}. 

\begin{teo}
 Let $f:\Uu \to \Vv$ be a holomorphic family of polynomial-like maps, of degree $d_t \geq 2$.
 Assume that
 there exist an acritical equilibrium web $\Mm$
 and an equilibrum lamination $\Ll$
 for $f$.
 Then,
 there exists a subset $\Pp = \cup_n \Pp_n \subset \Jj$,
 such that
 \begin{enumerate}
 \item $\Card \Pp_n = d^n + o(d^n)$;
\item
every $\gamma \in \Pp_n$ is $n$-periodic; and
\item $\forall M' \Subset M$, asymptotically every element of $\Pp$
is 
 repelling: $\frac{\Card \{\mbox{ repelling cycles in } \Pp_n\}}{ \Card \Pp_n }\to 1$.
 \end{enumerate}
 
 Moreover,
 $\sum_{\Pp_n} \delta_\gamma \to \Mm$.
\end{teo}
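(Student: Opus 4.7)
The plan is to build $\Pp = \cup_n \Pp_n$ by combining two ingredients: the construction of Lemma \ref{lemma_quasi_asintotic_motion}, which handles properties (1) and (2) and the final convergence statement, and an application of the abstract criterion Lemma \ref{lemma_bd_metric} to obtain the asymptotic repelling property (3). First I apply Lemma \ref{lemma_quasi_asintotic_motion} to the given equilibrium web $\Mm$: this yields $\Pp \subset \Jj$ with $\Card \Pp_n = d_t^n + o(d_t^n)$, every element of $\Pp_n$ being $n$-periodic, and $d_t^{-n}\sum_{\gamma\in\Pp_n}\delta_\gamma \to \Mm'$ for some equilibrium web $\Mm'$. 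Since $\Mm$ is acritical, the uniqueness of the equilibrium web given after Proposition \ref{LemSF} forces $\Mm' = \Mm$, which yields the last assertion of the statement.

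It remains to establish property (3). Fix $M' \Subset M$ and work on the restricted system $(\Jj_{M'}, \Ff, \Mm)$. I intend to apply Lemma \ref{lemma_bd_metric} with $d = d_t$, $\Ll$ the given equilibrium lamination, and $\nu = \Mm$. The measure $\Mm$ is mixing by the uniqueness and mixing statement following Proposition \ref{LemSF}. Its constant Jacobian $d_t$ follows from the definition of equilibrium web together with the relation $f_\lambda^*\mu_\lambda = d_t \mu_\lambda$ and the $d_t$-to-$1$ action of $\Ff$ on $\Ll$. The upper bound $d_t^n + o(d_t^n)$ on the number of repelling $n$-periodic points of $\Ff$ in $\Jj_{M'}$ comes from evaluating at a fixed $\lambda_0 \in M'$: the evaluation map is injective on $\Jj$, a Lipschitz contracting local inverse of $\Ff^n$ at a repelling $\gamma$ induces upon evaluation a contracting local inverse of $f_{\lambda_0}^n$ at $\gamma(\lambda_0)$, and Theorem \ref{teo_equidistribution}(1) bounds the number of repelling $n$-periodic points of $f_{\lambda_0}$ in $J_{\lambda_0}$ by $d_t^n + o(d_t^n)$. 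Condition (P1) is directly supplied by Proposition \ref{LemSF}.

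The main obstacle is condition (P2), the disjointness of the iterated preimages $\Ff^{-n}_{\h x}(C)$. The idea is to combine the backward contraction from Proposition \ref{LemSF}, which shrinks each preimage to a tube of size $e^{-nA}$ around the history $\gamma_{-n}$, with the lamination property of $\Ll$, namely that distinct graphs in $\Ll$ are pairwise disjoint and hence uniformly separated over the compact set $M'$. For $n$ large enough, the tubes around two distinct histories must lie in disjoint neighbourhoods of their respective graphs, giving (P2). Lemma \ref{lemma_bd_metric} then yields $d_t^{-n}\sum_{\gamma \in R_n}\delta_\gamma \to \Mm$, and in particular $\Card R_n = d_t^n + o(d_t^n)$. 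To conclude, each $\gamma \in R_n$ is matched with an element of $\Pp_n$ via $\gamma \mapsto \gamma(\lambda_0)$: this map is injective and lands in the set of repelling $n$-periodic points of $f_{\lambda_0}$ in $J_{\lambda_0}$, which is in bijection with $\Pp_n$ through Lemma \ref{lemma_property_web}(2); hence $R_n$ corresponds to a subset of $\Pp_n$. As both $\Card R_n$ and $\Card\Pp_n$ are asymptotic to $d_t^n$, the ratio $\Card\{\text{repelling in }\Pp_n\}/\Card\Pp_n$ tends to $1$, completing property (3).
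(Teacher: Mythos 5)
Your proposal follows the paper's proof in all its essentials: you take $\Pp=\cup_n\Pp_n$ from Lemma \ref{lemma_quasi_asintotic_motion}, identify the limit web with $\Mm$ via the uniqueness of the acritical web, and obtain property (3) by applying Lemma \ref{lemma_bd_metric} to the system $(\Oo(M',\b\Uu,\C^k),\Ff,\Mm)$ with the lamination $\Ll$, using Proposition \ref{LemSF} for (P1) and a cardinality count to match the repelling graphs $R_n$ with $\Pp_n$. The one step that would fail is your verification of (P2). You want to deduce the disjointness of the preimages $\Ff^{-n}_{\h x}(C)$ from the backward contraction combined with a ``uniform separation'' of the graphs of $\Ll$ over the compact set $M'$. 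No such uniform separation exists: $\Ll$ carries the full measure of every $\mu_\lam$, so it contains uncountably many pairwise disjoint graphs which necessarily accumulate on one another, and even for a fixed $n$ the mutual distances between the $\sim d_t^n$ graphs $\Gamma_{\gamma_{-n}}$ serving as centers of the tubes admit no lower bound comparable with $e^{-nA}$; letting $n$ grow does not help because the relevant family of graphs changes with $n$.

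The correct (and simpler) argument, which is the one the paper invokes with the phrase ``this is true at any fixed parameter'', is the following: two distinct histories $\h x\neq \h y$ inside the lamination induce, at every fixed $\lam$, distinct histories of points (because distinct graphs of $\Ll$ are disjoint, hence distinct at every parameter), and therefore distinct inverse branches of $f_\lam^n$; images of distinct inverse branches of $f_\lam^n$ are disjoint, so the evaluations at $\lam$ of $\Ff^{-n}_{\h x}(C)$ and $\Ff^{-n}_{\h y}(C)$ are disjoint, and a fortiori so are the two sets of graphs. Two further small inaccuracies in your write-up: the evaluation map $p_{\lam_0}$ is not injective on all of $\Jj$ (distinct graphs in $\Jj$, or even in $\Supp\Mm$, may cross); it is injective on the set of repelling periodic graphs by uniqueness of the holomorphic continuation of a repelling cycle, which is what you actually need. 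Also, the evaluation at $\lam_0$ of a repelling periodic graph need not lie in $J_{\lam_0}$, so the upper bound required in the hypotheses of Lemma \ref{lemma_bd_metric} should be derived from the fact that $f_{\lam_0}$ has exactly $d_t^n$ periodic points of period dividing $n$ (counted with multiplicity), rather than from Theorem \ref{teo_equidistribution}(1), which only counts repelling points inside the Julia set.
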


The need to restrict to compact subsets of $M$
is due to the fact that
the construction of the equilibrium lamination is essentially local
(see Proposition \ref{LemSF}).
Thus, the
 assumptions of Lemma \ref{lemma_bd_metric}
are satisfied on relatively compact subsets of $M$.

 \begin{proof}
  We consider the set $\Pp=\cup_n \Pp_n \subset \Jj$
  given by Lemma \ref{lemma_quasi_asintotic_motion}. We just need to prove the third assertion. We thus fix $M'\Subset M$
  and consider the compact metric space $\Oo (M', \b \Uu, \C^k)$.
  By Proposition \ref{LemSF}
and the implication \ref{item_cycles_move} $\Rightarrow$ \ref{item_laminar} of Theorem \ref{teo_equiv_2}
  all the assumptions of Lemma \ref{lemma_bd_metric} are satisfied by the system
  $(\Oo (M', \b \Uu, \C^k), \Ff, \Mm)$, with $\Ll$ any equilibrium lamination for the system.
  The assumption (P2)
 is verified since
 this is true at any fixed parameter.
  The statement follows from the following two assertions:
  \begin{enumerate}
   \item for every repelling periodic $\gamma \in R_n$ given by Lemma \ref{lemma_bd_metric}, the point
   $\gamma(\lam)$ is repelling for every $\lam\in M'$; and
   \item asymptotically all elements of $R_n$
   coincide
   with elements of $\Pp_n$.
  \end{enumerate}
The first point is a consequence of the Lipschitz estimate
of the local inverse of $\Ff^n$ at the points of $R_n$ (since the Lipschitz constant of $\Ff^{-n}$ dominates the Lipschitz constant
of $f_\lam^{-n}$, for every $\lam$),
the second of the fact that both $\Pp_n$
and $R_n$ have cardinality $d_t^n + o(d_t^n)$ and, at every $\lam$, the number of $n$-periodic points is
$d_t^n$.
 \end{proof}

 \subsection{Proof of Theorem \ref{teo_equiv_intro}
 }\label{section_equivalence_AB}
 
 In this section we show that the conditions stated in the Theorems \ref{teo_equiv} and \ref{teo_equiv_2}
 are all equivalent. This completes the proof of Theorem \ref{teo_equiv_intro}.

\begin{teo}\label{teo_equivalenti}
Let $f:\Uu\to\Vv$
 be a holomorphic family of polynomial-like maps  of large
 topological degree $d_t \geq 2$. Assume that the parameter space is simply connected. Then conditions \ref{item_support} -- \ref{item_graph}
 of Theorem
 \ref{teo_equiv} and \ref{item_cycles_move} -- \ref{item_laminar}
 of Theorem \ref{teo_equiv_2}
 are all equivalent.
\end{teo}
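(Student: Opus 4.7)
The plan is to combine the internal equivalences already proved in Theorem \ref{teo_equiv} (among the conditions I.1--I.4) and Theorem \ref{teo_equiv_2} (among II.1--II.3) with a single cross-implication in each direction, so as to reduce the task to proving that I.4 (condition \ref{item_graph}) is equivalent to II.2 (condition \ref{item_mjs}).

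The easy direction is II.2 $\Rightarrow$ I.4. Assume that $f$ admits an acritical equilibrium web $\Mm$. Since $\Mm(\Jj_s)=0$ by Definition \ref{defi_acritical}, the set $\Supp\Mm\setminus\Jj_s$ is nonempty; any $\gamma$ in it belongs to $\Jj\subset\Oo(M,\C^k,\Uu)$, and by definition its graph $\Gamma_\gamma$ is disjoint from $\bigcup_{m,n\geq0}f^{-m}(f^n(C_f))$, and in particular from the postcritical set of $f$. Restricting $\gamma$ to any relatively compact neighbourhood $M_0\Subset M$ of a given $\lam_0$ then supplies exactly the holomorphic section required by I.4.

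The content-bearing direction is I.4 $\Rightarrow$ II.2. By Theorem \ref{teo_equiv}, I.4 is equivalent to I.3, so we may assume that there are no Misiurewicz parameters anywhere in $M$. The plan is to upgrade this global condition into the existence of a single globally defined section $\gamma\in\Oo(M,\C^k,\Uu)$ whose graph avoids the postcritical set of $f$, and then to apply Proposition \ref{prop_graph_then_acritical}(2), which directly produces an ergodic acritical equilibrium web and hence II.2. To produce the required global $\gamma$ one first invokes Theorem \ref{teo_motion_hyp_properties} to obtain, at a base parameter, a hyperbolic set $E_0\subset J_0$ for some iterate $f_0^N$ that is not contained in the postcritical set and whose repelling periodic points move holomorphically over a small ball; then, exactly as in the $\P^k$-argument of \cite{bbd2015, tesi} (reduction via Hurwitz's theorem), the absence of Misiurewicz parameters combined with the simple connectedness of $M$ allows one to continue these local motions over all of $M$, the resulting graph remaining inside $\Uu$ and staying away from the postcritical set throughout.

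The main obstacle lies in this last globalization step: simple connectedness is essential in order to rule out monodromy when continuing the holomorphic motion of a repelling periodic point of $E_0$ along loops in $M$, and this is precisely the reason why the simple connectedness assumption appears both in Theorem \ref{teo_construct_web}(2) and in the statement of Theorem \ref{teo_equivalenti}. Once this globalization is granted, Proposition \ref{prop_graph_then_acritical}(2) together with Theorems \ref{teo_equiv} and \ref{teo_equiv_2} assemble immediately into the full equivalence of all seven conditions.
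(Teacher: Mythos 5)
Your reduction to two cross-implications is sound, and your direction II.2 $\Rightarrow$ I.4 is correct (and essentially the paper's: since $\Mm(\Jj_s)=0$, any $\gamma\in\Jj\setminus\Jj_s$ has graph disjoint from $\bigcup_{m,n\ge 0}f^{-m}(f^n(C_f))\supset\bigcup_n f^n(C_f)$, which gives \ref{item_graph}). The gap is in the converse. Condition \ref{item_graph} is intrinsically \emph{local}: the implication \ref{item_misiurewicz} $\Rightarrow$ \ref{item_graph} of Theorem \ref{teo_equiv} only produces, for each $\lam_0$, a graph over a small neighbourhood $M_0\Subset M$ (indeed Theorem \ref{teo_motion_hyp_properties} only furnishes a holomorphic motion of the hyperbolic set $E_0$ over a small ball $B_r$). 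Your proof then asserts that absence of Misiurewicz parameters plus simple connectedness lets you continue this single section over all of $M$ while it ``remains inside $\Uu$ and stays away from the postcritical set throughout.'' This is exactly the nontrivial point and it is not justified: simple connectedness only removes monodromy obstructions, it does not tell you that the continuation of one fixed graph exists beyond $B_r$, nor that it keeps avoiding the closed set $\bigcup_n f^n(C_f)$ at the boundary of its domain of definition; and ``no Misiurewicz parameters'' is a statement about repelling periodic points meeting $f^{n_0}(C_f)$, not about arbitrary sections.

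The paper circumvents this by globalizing a different object. From the local graphs one gets, via Proposition \ref{prop_graph_then_acritical}(2) and Theorem \ref{teo_equiv_2}, an \emph{equilibrium lamination} on each ball of a countable cover of $M$; Proposition \ref{prop_global_laminar} then glues these local laminations into a global one, using the uniqueness of the acritical equilibrium web on (simply connected) overlaps and discarding only $\Mm$-null families of graphs together with their orbits. This is robust precisely because a lamination carries full $\mu_\lam$-measure worth of graphs, whereas a single section has no slack. A global graph avoiding the postcritical set does exist a posteriori (take any element of the global lamination), but deriving it first, as you propose, puts the conclusion before the mechanism that proves it. To repair your argument, replace the continuation of a single section by the gluing of local laminations as in Proposition \ref{prop_global_laminar}.
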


 It is immediate to see, by the definition of an equilibrium lamination, that condition \ref{item_laminar}
 (the existence of a lamination)
 implies condition \ref{item_graph} (existence of a graph avoiding the postcritical set), since any element in the lamination
 satisfies
 the desired property.
 Viceversa, by Proposition \ref{prop_graph_then_acritical},
 we see that condition \ref{item_graph}
 directly implies
 a local version of Theorem \ref{teo_equiv}.
 Using the uniqueness of the equilibrium lamination, we can nevertheless recover that the conditions in Theorem
 \ref{teo_equiv} imply the ones in Theorem \ref{teo_equiv_2} on all the parameter space. This is done in the
 following proposition.
 
 \begin{prop}\label{prop_global_laminar}
  Let $f:\Uu \to \Vv$ be a holomorphic family of polynomial-like maps of large topological degree $d_t \geq 2$. Assume that
  the parameter space $M$ is simply connected and
  that every point $\lam_0 \in M$ has a neighbourhood where the system admits an equilibrium lamination. Then $f$
  admits an equilibrium lamination on all the parameter space.
 \end{prop}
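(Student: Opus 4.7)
The plan is to extend each graph of a local equilibrium lamination to a globally defined holomorphic graph on $M$ by analytic continuation, using simple connectedness to guarantee that the extension is well defined.

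First, I would establish a local compatibility statement: if $U,V\subset M$ are connected open sets carrying equilibrium laminations $\Ll_U,\Ll_V$ with non-empty connected overlap, then the associated acritical equilibrium webs $\Mm_U,\Mm_V$ restrict to equilibrium webs on $U\cap V$, and hence by the uniqueness assertion in Theorem \ref{teo_equiv_2} they coincide on the overlap. Combined with the disjointness of the graphs in each lamination, this forces that, for $\Mm_U$-almost every $\gamma\in\Ll_U$, the restriction $\gamma|_{U\cap V}$ coincides with the restriction of some (unique) element $\gamma'\in\Ll_V$. In particular $\gamma$ admits a holomorphic continuation across $V$ through $\gamma'$.

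Next, fix a reference parameter $\lam^*\in M$ and a local lamination $\Ll^*$ on a connected neighbourhood $U^*$ of $\lam^*$. Given $\gamma_0\in\Ll^*$ and a path $\sigma:[0,1]\to M$ starting at $\lam^*$, I would cover $\sigma$ by finitely many connected open sets $U^*=U_0,U_1,\dots,U_n$ each carrying a local lamination, and iteratively apply the compatibility step to continue $\gamma_0$ holomorphically along $\sigma$. To see that the endpoint of the continuation does not depend on $\sigma$, I use simple connectedness: any two paths with common endpoints are homotopic, and a standard subdivision of the homotopy into small squares allows one to apply the local compatibility in each square, forcing the two continuations to coincide. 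This produces a globally defined holomorphic graph $\gamma_0:M\to\C^k$, after discarding a $\Mm^*$-null set of starting graphs that might hit the exceptional loci of the uniqueness statements.

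Finally, let $\Ll$ denote the collection of all global extensions obtained as above from $\Mm^*$-a.e.\ $\gamma_0\in\Ll^*$. I would verify the four conditions of Definition \ref{defi_lamination} for $\Ll$ by reducing them to the corresponding local statements: $\Ff$-invariance follows because the extension commutes with the dynamics; pairwise disjointness of the graphs in $\Ll$ is checked at each $\lam$ and follows from disjointness in the local lamination containing $\lam$; the full $\mu_\lam$-measure property is local, and holds at each $\lam$ because the set of values $\{\gamma(\lam)\colon \gamma\in\Ll\}$ contains the corresponding set for the local lamination near $\lam$; avoidance of the grand orbit of the critical set and the $d_t^k$-to-$1$ degree of $\Ff$ are also local and transfer to $\Ll$. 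The main obstacle is to handle the $\Mm$-null exceptional sets that come from the uniqueness statements: one must ensure that after removing a single $\Mm^*$-null set of starting graphs, the remaining graphs all extend consistently along every loop in $M$; this is where simple connectedness is used decisively, as it reduces the verification to finitely many homotopy moves on each fixed compact exhaustion of $M$.
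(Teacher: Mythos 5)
Your proposal is correct and follows essentially the same route as the paper: uniqueness of the acritical equilibrium web on overlaps forces local laminations to agree up to null sets, graphs are then extended by analytic continuation after discarding (countably many) null sets of exceptional graphs, and simple connectedness rules out holonomy obstructions. The only cosmetic difference is that you phrase the gluing as path-continuation from a base point plus a monodromy argument, whereas the paper glues pairwise over a countable cover of balls; these are equivalent.
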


 In particular, if condition \ref{item_graph} holds, the assumptions of Proposition \ref{prop_global_laminar}
 are satisfied (by Proposition \ref{prop_graph_then_acritical} and Theorem
 \ref{teo_equiv_2})
 and thus condition \ref{item_laminar}
 holds, too. This complete the proof of Theorem \ref{teo_equivalenti}.
 
 \begin{proof}
 Consider a countable cover $\left\{B_n\right\}$ by open balls of the parameter space $M$, with the property that
  on every $B_n$ the system admits an equilibrium lamination $\Ll_n$. In particular, on every $B_n$
  the restricted system admits an acritical web.
Consider two intersecting balls $B_1$ and $B_2$. By the uniqueness of the equilibrium web on the intersection
(which is simply connected), both the corresponding webs induce the same one on $B_1 \cap B_2$.
  By analytic continuation, and up to
  removing a zero-measure (for the web on the intersection)
  subset of graphs from the laminations $\Ll_1$ and $\Ll_2$
  (and all their images and preimages, which are always
  of measure zero),
  we obtain a set of holomorphic graphs, defined on all of $B_1 \cup B_2$, that satisfy all the properties
  required in Definition \ref{defi_lamination}, thus giving an equilibrium lamination there.
  The assertion follows repeating the argument, since the cover is countable
  and $M$ is simply connected (and thus we do not have holonomy problems when glueing the laminations).
 \end{proof}

\bibliography{bib_pl.bib}{}

\begin{thebibliography}{BBD15}

\bibitem[BB07]{BB1}
Giovanni Bassanelli and Fran{\c{c}}ois Berteloot.
\newblock Bifurcation currents in holomorphic dynamics on {$\C\Pj^k$}.
\newblock {\em Journal f{\"u}r die reine und angewandte Mathematik (Crelle's
  Journal)}, 2007(608):201--235, 2007.

\bibitem[BBD15]{bbd2015}
Fran{\c{c}}ois Berteloot, Fabrizio Bianchi, and Christophe Dupont.
\newblock Dynamical stability and {L}yapunov exponents for holomorphic
  endomorphisms of {$\mathbb{P}^k$}.
\newblock {\em To appear on the Annales Scientifiques de l'\'{E}cole normale
  sup{\'e}rieure, arXiv preprint arXiv:1403.7603}, 2015.

\bibitem[BD99]{BriendDuval1}
Jean-Yves Briend and Julien Duval.
\newblock Exposants de {L}iapounoff et distribution des points p{\'e}riodiques
  d'un endomorphisme de {$\mathbb{C}\mathbb{P}^k$}.
\newblock {\em Acta mathematica}, 182(2):143--157, 1999.

\bibitem[BD14]{BergerDujardin}
Pierre Berger and Romain Dujardin.
\newblock On stability and hyperbolicity for polynomial automorphisms of
  {${{\mathbb{C}}^2}$}.
\newblock {\em To appear on the Annales Scientifiques de l'\'{E}cole normale
  sup{\'e}rieure, arXiv preprint arXiv:1409.4449}, 2014.

\bibitem[Ber10]{berteloot2010lyapunov}
Fran{\c{c}}ois Berteloot.
\newblock Lyapunov exponent of a rational map and multipliers of repelling
  cycles.
\newblock {\em Rivista di {M}atematica della Universit{\`a} di Parma},
  1(2):263--269, 2010.

\bibitem[Bia16]{tesi}
Fabrizio Bianchi.
\newblock {\em Motions of Julia sets and dynamical stability in several complex
  variables}.
\newblock PhD thesis, Universit{\'e} {T}oulouse {I}{I}{I} {P}aul {S}abatier and
  Universit{\'a} di {P}isa, 2016.

\bibitem[BT16]{bt_desboves}
Fabrizio Bianchi and Johan Taflin.
\newblock Bifurcations in the elementary desboves family.
\newblock {\em To appear on the Proceedings of the AMS, arXiv preprint
  arXiv:1607.01603}, 2016.

\bibitem[CFS82]{CFS}
Isaak~P. Cornfeld, Sergej~V. Fomin, and Ya~G. Sinai.
\newblock {\em Ergodic theory}.
\newblock Springer, 1982.

\bibitem[DeM01]{demarco2001dynamics}
Laura DeMarco.
\newblock Dynamics of rational maps: a current on the bifurcation locus.
\newblock {\em Mathematical Research Letters}, 8(1-2):57--66, 2001.

\bibitem[DeM03]{demarco2003dynamics}
Laura DeMarco.
\newblock Dynamics of rational maps: Lyapunov exponents, bifurcations, and
  capacity.
\newblock {\em Mathematische Annalen}, 326(1):43--73, 2003.

\bibitem[DL13]{dujardin2013stability}
Romain Dujardin and Mikhail Lyubich.
\newblock Stability and bifurcations for dissipative polynomial automorphisms
  of {$\mathbb{C}^2$}.
\newblock {\em Inventiones mathematicae}, 200(2):439--511, 2013.

\bibitem[DS03]{ds_allure}
Tien-Cuong Dinh and Nessim Sibony.
\newblock Dynamique des applications d'allure polynomiale.
\newblock {\em Journal de math{\'e}matiques pures et appliqu{\'e}es},
  82(4):367--423, 2003.

\bibitem[DS06]{ds_geometry}
Tien-Cuong Dinh and Nessim Sibony.
\newblock Geometry of currents, intersection theory and dynamics of
  horizontal-like maps.
\newblock {\em Annales de l'Institut Fourier}, 56(2):423--457, 2006.

\bibitem[DS10]{ds_cime}
Tien-Cuong Dinh and Nessim Sibony.
\newblock Dynamics in several complex variables: endomorphisms of projective
  spaces and polynomial-like mappings.
\newblock In {\em Holomorphic dynamical systems}, pages 165--294. Springer,
  2010.

\bibitem[Duj16]{dujardin2016non}
Romain Dujardin.
\newblock Non density of stability for holomorphic mappings on $\mathbb{P}^k$.
\newblock {\em arXiv preprint arXiv:1610.01785}, 2016.

\bibitem[Fed96]{federer1996geometric}
Herbert Federer.
\newblock {\em Geometric Measure Theory}.
\newblock Springer, 1996.

\bibitem[FS01]{fs_examplesP2}
John~Erik Forn{\ae}ss and Nessim Sibony.
\newblock Dynamics of {$\P^2$} (examples).
\newblock {\em Contemporary Mathematics}, 269:47--86, 2001.

\bibitem[Gro03]{gromov2003entropy}
Mikha{\"\i}l Gromov.
\newblock On the entropy of holomorphic maps.
\newblock {\em Enseign. Math}, 49(3-4):217--235, 2003.

\bibitem[HP94]{hubbard1991superattractive}
John~H Hubbard and Peter Papadopol.
\newblock Superattractive fixed points in {$\mathbb{C}^n$}.
\newblock {\em Indiana University Mathematics Journal}, 43(1):311--365, 1994.

\bibitem[HS74]{harvey1974characterization}
Reese Harvey and Bernard Shiffman.
\newblock A characterization of holomorphic chains.
\newblock {\em The Annals of Mathematics}, 99(3):553--587, 1974.

\bibitem[Lyu83]{lyubich1983some}
Mikhail Lyubich.
\newblock Some typical properties of the dynamics of rational maps.
\newblock {\em Russian Mathematical Surveys}, 38(5):154--155, 1983.

\bibitem[MSS83]{mane1983dynamics}
Ricardo Man{\'e}, Paulo Sad, and Dennis Sullivan.
\newblock On the dynamics of rational maps.
\newblock {\em Annales scientifiques de l'{\'E}cole Normale Sup{\'e}rieure},
  16(2):193--217, 1983.

\bibitem[Ose68]{oseledets1968multiplicative}
Valery~Iustinovich Oseledets.
\newblock A multiplicative ergodic theorem: {C}haracteristic {L}yapunov,
  exponents of dynamical systems.
\newblock {\em Trudy Moskovskogo Matematicheskogo Obshchestva}, 19:179--210,
  1968.

\bibitem[Par69]{parry1969entropy}
William Parry.
\newblock {\em Entropy and generators in ergodic theory.}
\newblock WA Benjamin, Inc., New York-Amsterdam, 1969.

\bibitem[Pha05]{pham}
Ngoc-Mai Pham.
\newblock Lyapunov exponents and bifurcation current for polynomial-like maps.
\newblock {\em ArXiv preprint math/0512557}, 2005.

\bibitem[Siu74]{siu1974analyticity}
Yum-Tong Siu.
\newblock Analyticity of sets associated to {L}elong numbers and the extension
  of closed positive currents.
\newblock {\em Inventiones mathematicae}, 27(1):53--156, 1974.

\end{thebibliography}
\bibliographystyle{alpha}

\end{document}